%
%%\documentclass[onecolumn]{IEEEtran} % for  journal submission
%\documentclass[twocolumn]{IEEEtran} % for  journal submission

%===============================================================================
% $Id: ifacconf.tex 19 2011-10-27 09:32:13Z jpuente $  
% Template for IFAC meeting papers
% Copyright (c) 2007-2008 International Federation of Automatic Control
%===============================================================================

%\documentclass[a4paper,10pt,reqno]{amsart}
\documentclass[a4paper,10pt,titletoc]{amsart}

\usepackage{graphicx}      % include this line if your document contains figures

\usepackage{bookmark}

\usepackage{epsfig}
%\ifCLASSINFOpdf
%\else
%\fi
%\usepackage[cmex10]{amsmath}

\usepackage{amsmath}%

\graphicspath{{fig/}}

\usepackage{amsfonts}
\usepackage{amssymb}

\usepackage{tikz}
\usetikzlibrary{matrix,positioning,decorations.pathreplacing}

\usepackage{color}

\usepackage{enumitem}

\usepackage{ifthen}

\makeatletter
\DeclareOldFontCommand{\rm}{\normalfont\rmfamily}{\mathrm}
\DeclareOldFontCommand{\sf}{\normalfont\sffamily}{\mathsf}
\DeclareOldFontCommand{\tt}{\normalfont\ttfamily}{\mathtt}
\DeclareOldFontCommand{\bf}{\normalfont\bfseries}{\mathbf}
\DeclareOldFontCommand{\it}{\normalfont\itshape}{\mathit}
\DeclareOldFontCommand{\sl}{\normalfont\slshape}{\@nomath\sl}
\DeclareOldFontCommand{\sc}{\normalfont\scshape}{\@nomath\sc}
\makeatother

\usepackage{csquotes}  % For \enquotes
\newcommand\q{\enquote}

\usepackage{physics}   %For \curl

%%%%%%%%%%%%%%%%%%%%%%%%%%%%%%%%%%%%%%%%%%%%%%%%%%%%%%%%%%%%%%%%%%%%%%%%%%%%%%%%%

%    Acronyms for some notions

%%%%%%%%%%%%%%%%%%%%%%%%%%%%%%%%%%%%%%%%%%%%%%%%%%%%%%%%%%%%%%%%%%%%%%%%%%%%%%%%%

%
%
%\newcommand{\abs}[1]{\left|#1\right|}%

%%%%%%%%%%%%%%%%%%%%%%%%%%%%%%%%%%%%%%%%%%%%%%%%%%%%%%%%%%%%%%%%%%%%%%%%%%%%%%%%%

\newcommand{\ccat}[3]{{#1\, \underset{#3}{\lozenge}\,{#2}}}

\newcommand{\tm}{\times}
\newcommand\sat{\textup{sat}}

%

% monot hom aggr fun
%
%

%
\DeclareMathOperator{\loc}{loc}

\DeclareMathOperator*{\esssup}{ess\,sup}

\newcommand \Liminf {\mathop{\underline{\lim}}}

\newcommand \eps {\varepsilon}

%%%%%%%%%%%%%%%%%%%%%%%%%%%%%%%%%%%%%%%%%%%%%%%%%%%%%%
%Sets of numbers
  % Field F

\newcommand \N   {\mathbb{N}}
\newcommand \R   {\mathbb{R}}

\newcommand \C   {\mathbb{C}}

%%%%%%%%%%%%%%%%%%%%%%%%%%%%%%%%%%%%%%%%%%%%%%%%%%%%%%%
%

%% sideremarks
%\newcommand{\sideremark}[1]{\setlength{\marginparwidth}{1.5cm}%
  %\marginpar{\sffamily\raggedright\scriptsize #1}}

%%%%%   Comparison functions and other cal-letters

\newcommand \K   {\mathcal{K}}
\newcommand \Kinf{\mathcal{K_\infty}}

  % E.g. for semigroups
  % Fourier Transform
\newcommand{\Uc}{\ensuremath{\mathcal{U}}}

\newcommand{\Dc}{\ensuremath{\mathcal{D}}}
\newcommand{\Sc}{\ensuremath{\mathcal{S}}}

%%%%%%%%%%%%%%%%%%%%%%%%%%%%%%%  From PDEs lecture
%Norm with 3 symbols
\newcommand{\vertiii}[1]{{\left\vert\kern-0.25ex\left\vert\kern-0.25ex\left\vert #1 
    \right\vert\kern-0.25ex\right\vert\kern-0.25ex\right\vert}}

%\DeclareMathOperator{\dist}{dist}

%\DeclareMathOperator{\sgn}{sgn}
%\DeclareMathOperator{\loc}{loc}
%\DeclareMathOperator{\tr}{tr}

%%%%%%%%%%%%%%%%%%%%%%%%%%%%%%%%%%%%%%%%%%%%%%%%%%%%%%%%%%%%%%%%%%%%%%%%%%%%%%%%%%%%%%%%%%%%%

%%%%%%%%%%%%%%%%%%%%%%%%%%%%%%%%%%%% Jochen's macro
 % norm
 % modulus of real numbers, complex numbers or elements of vector lattices
\newcommand{\sg}[1]{(#1(t))_{t\geq 0}} % Semigroup named by #1
 % Group named by #1

\newcommand \qrq   {\quad\Rightarrow\quad}

% Probably from Conny's MTS III Skript

%\newcommand \rank {\operatorname{rank}}
%\newcommand{\supp}{\text{supp}}
%\newcommand{\rank}{\text{rank}}

%\newcommand \grad{\operatorname{grad}}
\newcommand \id  {\operatorname{id}}

\newcommand \re  {\mathrm{Re}}
\newcommand \im  {\mathrm{Im}}

\newcommand{\normt}[1]{{\left\vert\kern-0.25ex\left\vert\kern-0.25ex\left\vert #1 
		\right\vert\kern-0.25ex\right\vert\kern-0.25ex\right\vert}}

% From Fabian's DGL Skript

\renewcommand{\ker}{{\rm Ker}\,}

\newcommand{\dist}{{\rm dist}\,}

%\newcommand{\half}{\frac{1}{2}}

%Some inverse hyperbolic functions
%\DeclareMathOperator{\sech}{sech}
%\DeclareMathOperator{\csch}{csch}
%\DeclareMathOperator{\arcsec}{arcsec}
%\DeclareMathOperator{\arccot}{arcCot}
%\DeclareMathOperator{\arccsc}{arcCsc}
%\DeclareMathOperator{\arccosh}{arcCosh}
%\DeclareMathOperator{\arcsinh}{arcsinh}
%\DeclareMathOperator{\arctanh}{arctanh}
%\DeclareMathOperator{\arcsech}{arcsech}
%\DeclareMathOperator{\arccsch}{arcCsch}
%\DeclareMathOperator{\arccoth}{arcCoth} 

% !!!!!!!!!!!!!!!!!!!!!!!!! OLD COMMANDS (from old papers). NOT FOR USE!!!!

\newcommand{\Ah}{\hat{A}}

\newcommand{\Rh}{\hat{R}}

% !!!!!!!!!!!!!!!!!!!!!!!!!!!!!!!!!!!!!!!!!!!!!!

%------------sonstige praktische kuerzel von Conny's Folien
%\definecolor{wkmblue}{rgb}{0.075,0.2,0.58}

%\newcommand{\clo}[1]{(#1)^c}
\newcommand{\clo}[1]{\overline{#1}}

%%%%%%%%%%%%%%%%%%%%%%%%%%%%%%  Comments by different authors

           %Solution color

 % To be done

\newcommand{\mir}[1]{{\color{red}\bf AM: #1}}     % Comments, problems, suggestions etc. by Andrii Mironchenko
\newcommand{\amc}[1]{{\color{black} #1}}           %Parts, which have been changed by AM
     % AM explanations (e.g. for the problems)

    %Parts, which have been checked during partial re-read.
																									%The changes in these parts are highly probable.

%\newcommand{\cp}[1]{{\color{red}\bf CP: #1}}

%%%%%%%%%%%%%%%%%%%%%%%%%%%%%%%  If-constructions

\newif\ifMath					%For mathematical applications only
\newif\ifEngi					%For engineering applications only

\newif\ifAndo              %Some comments, that are not intended for final version. 
													 %Should be Yes, to see the full version with all comments
													 %Should be typically No for the official version
													
\newif\ifExercises					%Yes, if the exercises are included (e.g. exercises do not fit for Habil, or PhD thesis)
\newif\ifSolutions          %Yes, if the solutions (in this if environment) should be included.
\newif\ifGerman							%Yes, if the German parts should be included
\newif\ifEnglish						%Yes, if the English parts should be included

\newif\ifnothabil						%Yes, if this is NOT a habilitation thesis
														%For Habilitation, many appendices had to be excluded, and the references to the Appendix
														%have to be substituted by the references to external sourses
														%that are needed for the final version of the paper
														%Hence, putting here Yes, we will see a version, that is suitable for a future 
														%book on ISS of infinite-dimensional systems

\newif\ifFuture							%Yes, if some 'Future', unfinished parts should be added

\newif\ifConf                    %Parts only for a conference version of the paper
\newif\ifJournal								 %Parts only for a journal version of the paper

\newif\ifNOTFORBOOK
\newif\ifFullVersion
\newif\ifExludedDueToSpaceReasons

%Vectorized norms from DRW07
\usepackage{xifthen}

\newcommand{\einsnorm}[2]{\ensuremath{
    \!\!\;\!\!\!\;
    \left\bracevert\!\!\!\!\!\left\bracevert
    \!
		\ifthenelse{\isempty{#2}}{#1}{#1(#2)}
        %#1(#2)
    \!
      \right\bracevert\!\!\!\!\!\right\bracevert
    \!\!\;\!\!\!\;
  }}

%%%%%%%%%%%%%%%%%%%%%%%%   Start: Wikification %%%%%%%%%%%%%%%%%%%%%%%%%%%%%%%%%%%%%%%

%\newcommand{\ISS}{\hyperref[Def:ISS]{ISS}}

%%%%%%%%%%%%%%%%%%%%%%%%   End: Wikification %%%%%%%%%%%%%%%%%%%%%%%%%%%%%%%%%%%%%%%

%%%%%%%%%%%%%%%%%%%%%%%%   Start: Colored Boxes %%%%%%%%%%%%%%%%%%%%%%%%%%%%%%%%%%%%%%%

%  needs  \usepackage{empheq}

\usepackage{xcolor}
\definecolor{blond}{rgb}{0.98, 0.94, 0.75}
	
\newlength\mytemplen
\newsavebox\mytempbox

\makeatletter
\newcommand\mybluebox{%
    \@ifnextchar[%]
       {\@mybluebox}%
       {\@mybluebox[0pt]}}

\def\@mybluebox[#1]{%
    \@ifnextchar[%]
       {\@@mybluebox[#1]}%
       {\@@mybluebox[#1][0pt]}}

\def\@@mybluebox[#1][#2]#3{
    \sbox\mytempbox{#3}%
    \mytemplen\ht\mytempbox
    \advance\mytemplen #1\relax
    \ht\mytempbox\mytemplen
    \mytemplen\dp\mytempbox
    \advance\mytemplen #2\relax
    \dp\mytempbox\mytemplen
    \colorbox{blond}{\hspace{1em}\usebox{\mytempbox}\hspace{1em}}}

\makeatother
%%%%%%%%%%%%%%%%%%%%%%%%   End: Colored Boxes %%%%%%%%%%%%%%%%%%%%%%%%%%%%%%%%%%%%%%%

%%%%%%%%%%%%%%%%%%%%%%%%   Start: From Halbgruppentheorie Skript %%%%%%%%%%%%%%%%%%%%%%%%%%%%%%%%%%%%%%%

% Das Differential-d, aus der Klasse bgteubner
\makeatletter
\let\origd=\d
\renewcommand*\d{
  \relax\ifmmode
    \mathrm{d}%
  \else
    \expandafter\origd
  \fi
}\makeatother

% For the text over equal sign
\usepackage{mathtools}

%Scalar products
  %Left angle
 %Right angle
    %Scalar product

%Counters for axioms of the control systems

\makeatletter % changes the catcode of @ to 11
\newcommand{\pushright}[1]{\ifmeasuring@#1\else\omit\hfill$\displaystyle#1$\fi\ignorespaces}
\newcommand{\pushleft}[1]{\ifmeasuring@#1\else\omit$\displaystyle#1$\hfill\fi\ignorespaces}
\makeatother % changes the catcode of @ back to 12

\newcounter{syscounter}
\newenvironment{sysnum}{\begin{list}{($\Sigma{\arabic{syscounter}}$)}%
{\settowidth{\labelwidth}{($\Sigma4$)}
\settowidth{\leftmargin}{($\Sigma4$)~}%
\usecounter{syscounter}}}
{\end{list}}

%Counters for Work packages
\newcounter{WPcounter}%[section] %[section]

\newtheorem{theorem}{Theorem}[section]
\newtheorem{lemma}[theorem]{Lemma}
\newtheorem{proposition}[theorem]{Proposition}
\newtheorem{corollary}[theorem]{Corollary}
  %for model ~(\ref{OekosystemModell}
%
%\theoremstyle{definition}
\newtheorem{definition}[theorem]{Definition}
\newtheorem{example}[theorem]{Example}

\newtheorem{ass}{Assumption}[section]

\newtheorem*{solution*}{Solution}

\newtheorem{nnremark}[theorem]{\bf Remark}
\newenvironment{remark}{\begin{nnremark} \rm }{\hfill \hspace*{1pt}\hfill $\lrcorner$\end{nnremark}}

\usepackage{pgfplots}
\usepackage{pgf}
\usepackage{tikz} % package for graphics with tikz
\usetikzlibrary{decorations.pathmorphing} % package fuer Feder!
\usepackage{tikz-3dplot}
\usepgfplotslibrary{fillbetween}
\usetikzlibrary{arrows}
\usetikzlibrary{graphs,decorations.pathmorphing,decorations.markings}
\usetikzlibrary{calc,math}
\usetikzlibrary{patterns}
\usetikzlibrary{shapes}
\usetikzlibrary{tikzmark}

\usetikzlibrary {positioning}
\usetikzlibrary{shadows}
\usetikzlibrary{patterns.meta}
\usetikzlibrary{plotmarks}

%\usetikzlibrary[arrows.meta,backgrounds,positioning,fit,petri]
%\usetikzlibrary[arrows.meta,decorations.pathmorphing,backgrounds,positioning,fit,petri]

\usepackage{subcaption}  %probably includes  \usepackage{subfigure}

\usepackage[absolute,overlay]{textpos}
  \setlength{\TPHorizModule}{1mm}
  \setlength{\TPVertModule}{1mm}

%%%%%% This is for relationships between strong iISS and other properties.
\usepackage{scalefnt}   % Used in some tikz-figures

%%% The pattern for filling the 2-d figures with hatch pattern. Was used many times for PDE course and the corresponding solutions.
\tikzdeclarepattern{
  name=hatch,
  parameters={\hatchsize,\hatchangle,\hatchlinewidth},
  bounding box={(-.1pt,-.1pt) and (\hatchsize+.1pt,\hatchsize+.1pt)},
  tile size={(\hatchsize,\hatchsize)},
  tile transformation={rotate=\hatchangle},
  defaults={
    hatch size/.store in=\hatchsize,hatch size=5pt,
    hatch angle/.store in=\hatchangle,hatch angle=0,
    hatch linewidth/.store in=\hatchlinewidth,hatch linewidth=.4pt,
  },
  code={
      \draw[line width=\hatchlinewidth] (0,0) -- (\hatchsize,\hatchsize);
  }
}

\pgfmathsetmacro\weight{1/2}
\pgfmathsetmacro\third{1/3}
\pgfmathsetmacro\twothirds{2/3}

%This was used for plotting the non-implications, e.g. was used for the figure describing the ISS characterizations.
\tikzset{degil/.style={
            decoration={markings,
            mark= at position 0.5 with {
%                  \node[transform shape] (tempnode) {$\backslash$};
                  \node[transform shape] (tempnode) {$/$};
                  }
              },
              postaction={decorate}
}
}

%%%%%% This is for robotic manipulator

%\pgfplotsset{compat=1.10}
\usepgfplotslibrary{fillbetween}
\usetikzlibrary{intersections,through}

\makeatletter % from https://tex.stackexchange.com/a/127045/121799
\tikzset{use path/.code=\tikz@addmode{\pgfsyssoftpath@setcurrentpath#1}}
\makeatother

\definecolor{manipulator-color}{RGB}{88,44,44}
\definecolor{manipulator-contour}{rgb}{0.0, 0.18, 0.39}  %cool black
%\definecolor{coolblack}{rgb}{0.0, 0.18, 0.39}

%%%%%% This is for the moments of inertia example in cascade interconnections
\tikzset{>=latex} % for LaTeX arrow head

\Andofalse %

%\newif\ifnothabil						%Yes, if this is NOT a habilitation thesis
														%For Habilitation, many appendices had to be excluded, and the references to the Appendix
														%have to be substituted by the references to external sourses
														%that are needed for the final version of the paper
														%Hence, putting here Yes, we will see a version, that is suitable for a future 
														%book on ISS of infinite-dimensional systems
%\nothabiltrue
\nothabilfalse

%\newif\ifExercises					%Yes, if the exercises are included (e.g. exercises do not fit for Habil, or PhD thesis)
\Exercisesfalse
%\Exercisestrue 

%\newif\ifSolutions          %Yes, if the solutions (in this if environment) should be included.
\Solutionsfalse 
%\Solutionstrue 

%\newif\ifGerman							%Yes, if the German parts should be included
\Germanfalse

%\newif\ifEnglish						%Yes, if the English parts should be included
\Englishtrue

%\newif\ifFuture							%Yes, if some 'Future', unfinished parts should be added
\Futurefalse

\setlength{\parskip}{1.2mm}%
%\setlength{\parindent}{0pt}%

%===============================================================================

\begin{document}

\title[Well-posedness for semilinear evolution equations]{Well-posedness and properties of the flow for semilinear evolution equations}
%\title[Stability of positive linear systems]{Stability criteria for positive linear discrete-time systems}
%

\author{Andrii Mironchenko}
\address{Faculty of Computer Science and Mathematics, University of
  Passau, Germany}
\curraddr{}
\email{andrii.mironchenko@uni-passau.de}

\subjclass[2010]{34H05, 35K58, 35Q93, 37L15, 93A15, 93B52, 93C10, 93C25, 93D05, 93D09}
%34H05   Control problems                 
%35K58   Semilinear parabolic equations    
%35Q93   PDEs in connection with control and optimization
%37      Dynamical systems and ergodic theory
%37Bxx   Topological dynamics
%37B25   Lyapunov functions and stability
%37C75   Stability theory
%37Lxx   Infinite-dimensional dissipative dynamical systems 
%37L15   Stability problems
%37L45   Hyperbolicity, Lyapunov functions
%39A06   Linear difference equations
%39A30   Stability theory for difference equations
%47B65   Positive linear operators and order-bounded operators
%93      System theory; control
%93A15   Large scale systems 
%93C10   Nonlinear systems
%93C25   Systems in abstract spaces
%93C55   Discrete-time control/observation systems
%93D05   Lyapunov and other classical stabilities (Lagrange, Poisson, $L^p, l^p$, etc.)
%93D09   Robust stability

%93-02
\keywords{Well-posedness, evolution equations, boundary control systems, infinite-dimensional systems, 
analytic systems}
\date{\today}
\begin{abstract}
We derive conditions for well-posedness of semilinear evolution equations with unbounded input operators. Based on this, we provide sufficient conditions for such properties of the flow map as Lipschitz continuity, bounded-implies-continuation property, boundedness of reachability sets, etc. These properties represent a basic toolbox for stability and robustness analysis of semilinear boundary control systems.

We cover systems governed by general $C_0$-semigroups, and analytic semigroups that may have both boundary and distributed disturbances. We illustrate our findings on an example of a Burgers' equation with nonlinear local dynamics and both distributed and boundary disturbances.
\end{abstract}

\maketitle

%\begin{document}
%\begin{frontmatter}
%
%%\title{Design of saturated controls for a unstable parabolic PDE}
%
%%Stabilization of a parabolic equation via control saturations}
%%\thanksref{} 
%% Title, preferably not more than 10 words.
%
%\thanks[footnoteinfo]{
%A. Mironchenko is supported by the DFG through the grant MI 1886/2-1.
%}
%
%\author[First]{Andrii Mironchenko} 
%%Christoph~Kawan, Andrii~Mironchenko, Abdalla~Swikir, Navid~Noroozi, Majid~Zamani
%
%
%\address[First]{Faculty of Computer Science and Mathematics, University of Passau, Germany. 
%(e-mail: andrii.mironchenko@uni-passau.de).}
%
%
%
%\begin{abstract}
%These notes are just for my own understanding of semilinear boundary control systems.
%\end{abstract}
%
%\begin{keyword}
%Nonlinear systems, infinite-dimensional systems, 
%input-to-state stability, Lyapunov methods.
%\end{keyword}
%
%\end{frontmatter}
%
%
%\chap

%===============================================================================

\section{Introduction}
\label{sec:Introduction}

\textbf{Semilinear evolution equations.}  In this work, we analyze the well-posedness and properties of the flow for semilinear evolution equations of the form
\begin{subequations}
\label{eq:SEE+admissible-intro} 
\begin{eqnarray}
\dot{x}(t) & = & Ax(t) + B_2f(x(t),u(t)) + Bu(t),\quad t>0,  \label{eq:SEE+admissible-intro1}\\
x(0)  &=&  x_0. \label{eq:SEE+admissible-intro2}
\end{eqnarray}
\end{subequations}
Here $A$ generates a strongly continuous semigroup over a Banach space $X$, the operators $B$ and $B_2$ are admissible with respect to some function space, and \amc{$f$ is Lipschitz continuous in the first variable} \amc{(see Assumption~\ref{Assumption1} for precise requirements on $f$)}.
This class of systems is rather general:
\begin{itemize}
	\item If $B$ and $B_2$ are bounded operators, \eqref{eq:SEE+admissible-intro} corresponds to the classic semilinear evolution equations covering broad classes of semilinear PDEs with distributed inputs. If $A$ is a bounded operator, such a theory was developed in \cite{DaK74}. In the case of unbounded generators $A$, we refer to \cite{Paz83}, \cite{Hen81}, \cite[Chapter 11]{CuZ20}, \cite{CaH98}, etc. 

	\item If $B_2=0$, and $B$ is an admissible operator, then \eqref{eq:SEE+admissible-intro} reduces to the class of general linear control systems, that fully covers linear boundary control systems (see \cite{CuZ20, JaZ12}, \cite{TuW09,TuW14}, \cite{EmT00} for an overview). In particular, this class includes linear evolution PDEs with boundary inputs.
	
	\item 
	\amc{Consider a linear system 
\begin{eqnarray}
\dot{x} = Ax + Bv,
\label{eq:linear-open-loop-intro}
\end{eqnarray}
with admissible $B$. Let us apply a feedback controller $v(x)=f(x,u_1)+u_2$ that is subject to additive actuator disturbance $u_2$ and further disturbance input $u_1$. Substituting this controller into \eqref{eq:linear-open-loop-intro}, we arrive 
at systems \eqref{eq:SEE+admissible-intro}, with $B_2=B$. 
}
%Therefore the systems \eqref{eq:SEE+admissible-intro}  are sometimes called Lipschitz perturbations of linear systems.

	\item In \cite{HuP21}, it was shown that the class of systems \eqref{eq:SEE+admissible-intro} includes 2D Navier–Stokes equations (under certain boundary conditions) with in-domain inputs and disturbances. Furthermore, in \cite{HuP21} the authors have designed an error feedback controller that guarantees approximate local velocity output tracking for a class of reference outputs.
	Viscous Burgers' equation with nonlinear local terms and boundary inputs of Dirichlet or Neumann type falls into the class \eqref{eq:SEE+admissible-intro} as well.
		
	\item In \cite{Sch20}, it was shown that semilinear boundary control systems with linear boundary operators could be considered a special case of systems \eqref{eq:SEE+admissible-intro}. In this case, it suffices to consider $B_2$ as the identity operator.
Furthermore, in \cite{Sch20}, the well-posedness and input-to-state stability of a class of analytic boundary control systems with nonlinear dynamics and a linear boundary operator were analyzed with the methods of operator theory. 
\end{itemize}

\textbf{ISS for infinite-dimensional systems.} Our main motivation to analyze the systems \eqref{eq:SEE+admissible-intro} 
stems from the robust stability theory.
During the last decade, we have witnessed tremendous progress in robust stability analysis of nonlinear infinite-dimensional systems subject to unknown unstructured disturbances.
Input-to-state stability (ISS) framework admits a significant place in this development, striving to become a unifying paradigm for robust control and observation of PDEs and their interconnections, including ODE-PDE and PDE-PDE cascades \cite{KaK19, MiP20, Sch20}.

Powerful techniques proposed to analyze the ISS property include: criteria of ISS and ISS-like properties in terms of weaker stability concepts 
\cite{MiW18b}, \cite{JNP18, Sch19c}, constructions of ISS Lyapunov functions for PDEs with in-domain and/or boundary controls 
\cite{PrM12, TPT18, ZhZ18, EdM19}, efficient functional-analytic methods for the study of linear systems with unbounded input operators (including linear boundary control systems) 
\cite{ZhZ19b, JNP18, JLR08, JSZ19, KaK16b, LhS19, KaK19}, non-coercive ISS Lyapunov functions \cite{MiW18b, JMP20}, as well as small-gain stability analysis of finite \cite{DaM13, KaJ07, KaJ11, Mir21} and infinite networks, \cite{DaP20, KMS21, MKG21, KMZ23}, etc. 

To make this powerful machinery work for any given system, one needs to verify its well-posedness,  properties of reachability sets, and regularity of the flow induced by this system. Usually this is done for PDE systems in a case-by-case manner.
In this paper, motivated by \cite{Sch20}, we develop sufficient conditions that help to derive these crucial properties for systems \eqref{eq:SEE+admissible-intro}, which cover many important PDE systems.

%
%Despite tremendous progress in the infinite-dimensional ISS theory, the ISS analysis, as well as robust control and observation of PDEs with boundary inputs, remains highly challenging.
%Already for linear BCS, converse ISS Lyapunov theorems have been shown only for specific classes of semigroup generators \cite{JMP20}. This raises serious concerns about the applicability of Lyapunov methods to the ISS analysis of BCS. Even worse, there are simple analytic input-to-state stable systems, for which the existence of classic ISS Lyapunov functions is neither proved nor disproved \cite{JMP20}.
%ISS analysis of more involved nonlinear PDEs with boundary inputs, such as Burgers' equation, remains highly challenging, and only partial results are available \cite{ZhZ19b}.

\textbf{State of the art.} 
The systems \eqref{eq:SEE+admissible-intro} have been studied (up to the assumptions on $f$, and the choice of the space of admissible inputs) in \cite{NaB16} under the requirement that its linearization is an exponentially stable regular linear system in the sense of \cite{TuW09, TuW14, Sta05}.
\cite{NaB16} ensures local well-posedness of regular nonlinear systems assuming the Lipschitz continuity of nonlinearity, and invoking regularity of the linearization.
On this basis, the authors show in \cite{NaB16} that  
an error feedback controller designed for robust output regulation of a linearization of a regular nonlinear system
 achieves approximate local output regulation for \amc{the original regular nonlinear system.}

Control of systems \eqref{eq:SEE+admissible-intro} has been studied recently in several papers. In particular, in \cite{NZW19}, the exact controllability of a class of regular nonlinear systems was studied using back-and-forth iterations.
A problem of robust observability was studied for a related class of systems in \cite{JLZ15}.

Stabilization of linear port-Hamiltonian systems by means of nonlinear boundary controllers was studied in \cite{Aug16,RZG17}.
Bounded controls with saturations (a priori limitations of the input signal) have been employed for PDE control in \cite{PTS16,TMP18,MPW21}. 
Recently, several papers appeared that treat nonlinear boundary control systems within the input-to-state stability framework. 
Nonlinear boundary feedback was employed for the ISS stabilization of linear port-Hamiltonian systems in \cite{ScZ21}. 

Several types of infinite-dimensional systems, distinct from \eqref{eq:SEE+admissible-intro}, have been studied as well. 
%Finally, let us mention that there are important classes of semilinear systems that are not covered by \eqref{eq:SEE+admissible-intro}. 
One of such classes is time-variant infinite-dimensional semilinear systems that have been first studied (as far as the author is concerned) for systems without disturbances in \cite{JDP95}.
Recently, in \cite{Sch22}, sufficient conditions for well-posedness and uniform global stability have been obtained for scattering-passive semilinear systems (see \cite[Theorem 3.8]{Sch22}). 

Another important extension of \eqref{eq:SEE+admissible-intro} are semilinear systems with outputs. 
Such systems with globally Lipschitz nonlinearities have been analyzed in \cite[Section 7]{TuW14}, and it was shown that such systems are well-posed and forward complete provided that the Lipschitz constant is small enough. 
\amc{In \cite{HCZ19} employing a counterexample, it was shown that a linear transport equation with a locally Lipschitz boundary feedback might fail to be well-posed.} Well-posedness of incrementally scattering-passive nonlinear systems with outputs has been analyzed in \cite{SWT22} by applying Crandall-Pazy theorem \cite{CrP69} on generation of
nonlinear contraction semigroups to a Lax-Phillips nonlinear semigroup representing the system together with its inputs and outputs.

\textbf{Contribution.} 
Our first main result \amc{is Theorem~\ref{PicardCauchy} guaranteeing} (under proper conditions on $f$ and the input operators) the local existence and uniqueness of solutions for the system  \eqref{eq:SEE+admissible-intro}
\amc{with a locally essentially bounded input $u$. }

There are several existence and uniqueness theorems in the literature. For example, \cite[Proposition 3.2]{NaB16} covers semilinear systems with $L^\infty$-inputs; \cite[Theorem 7.6]{TuW14}, \cite[Lemma 2.8]{HJS22} treat the case of bilinear systems of various type, and \cite{Sch20} considers the case of systems with linearly bounded nonlinearities. 
In contrast to the usual formulations of such results (including a closely related result \cite[Proposition 3.2]{NaB16}), we also provide a uniform existence time for solutions that controls the maximal deviation of the trajectory from the given set of initial conditions. 

Next, we show \amc{in Theorems~\ref{thm:Global_well-posedness}, \ref{thm:SEE-as-control systems}} that under natural conditions, the system \eqref{eq:SEE+admissible-intro} is a well-posed control system in the sense of \cite{MiP20}. Finally, we study the fundamental properties of the flow map, such as Lipschitz continuity with respect to initial states, boundedness of reachability sets, boundedness-implies-continuation property, etc. 
These properties are important in their own right. \amc{Moreover, they are key components for the robust stability analysis} of systems 
\eqref{eq:SEE+admissible-intro} as we explained before.

%
%The choice of the investigated properties is motivated by the demands of the input-to-state stability. These properties are essential to invoke powerful machinery developed for abstract control systems, such as Lyapunov methods, ISS superposition theorems, converse ISS Lyapunov theorems, non-coercive ISS Lyapunov functions, etc. \cite{MiP20}. However, the results definitely can be useful in a broader context.
%

The structure of semilinear evolution equations allows combining the \q{linear} methods of admissibility theory with \q{nonlinear} methods, such as fixed point theorems and Lyapunov methods. We consider the case of general $C_0$-semigroups and the special case of analytic semigroups, for which one can achieve stronger results. 
This synergy of tools is one of the novelties of this paper. For systems without inputs and without the presence of unbounded operators, the existence and uniqueness results as well as the properties of the flow are classical both for general and analytic case \cite{Paz83,Hen81}.
To show the applicability of our methods, we analyze well-posedness of semilinear parabolic systems with Dirichlet boundary inputs (motivated by \cite[p. 57]{Hen81}). Also, we  reformulate semilinear boundary control systems in terms of evolution equations, which makes our results applicable to boundary control systems as well. 

As argued at the previous pages, having developed conditions ensuring the well-posedness and \q{nice} properties of the flow map of systems \eqref{eq:SEE+admissible-intro}, we can analyze the ISS of \eqref{eq:SEE+admissible-intro} via such powerful tools as coercive and non-coercive ISS Lyapunov functions \cite{JMP20}, ISS superposition theorems \cite{MiW18b}, small-gain theorems for general systems \cite{MKG21}, etc. We expect that this will \amc{help to prove many results available for particular PDE systems, in a more general fashion. E.g., see \cite{Sch20} for an abstract version of the results obtained for particular classes of parabolic systems in \cite{ZhZ18}.}  
To make the paper accessible for the researchers trained primarily in nonlinear control and nonlinear ISS theory, we spell out the proofs in great detail with a tutorial flavor.

%Developing methods for constructing ISS Lyapunov functions for nonlinear PDEs with boundary inputs remains a challenging open problem.

 %the present literature, linear boundary control systems are studied predominantly for inputs that are measurable in time, while the nonlinear theory is mostly developed for the piecewise continuous inputs. Yet nonlinear theory for ODEs is again developed for measurable inputs. These differences create somewhat artificial incompatibilities between linear and nonlinear as well as finite-dimensional and infinite-dimensional ISS theories. 

%Finally, in the present literature, linear boundary control systems are studied predominantly for inputs that are measurable in time, while the nonlinear theory is mostly developed for the piecewise continuous inputs. Yet nonlinear theory for ODEs is again developed for measurable inputs. These differences create somewhat artificial incompatibilities between linear and nonlinear as well as finite-dimensional and infinite-dimensional ISS theories. 

%In this chapter, we have considered mild solutions. Another possibility would be to consider classical solutions. For relations between ISS with respect to mild solutions and ISS with respect to classical solutions, a reader may consult \cite[Proposition 2.11]{Sch20}. 

\textbf{Notation.} By $\N$, $\R$, $\R_+$, we denote the sets of natural, real, and nonnegative real numbers, respectively. 
$\clo{S}$ denotes the closure of a set $S$ (in a given topology). 
%By $V \subset\subset U$ we mean that $V$ is compactly contained in $U$, that is, $\clo{V}\subset U$. 

\amc{By $t \to a \pm 0$, we denote the fact that $t$ approaches $a$ from the right/left.}

Vector spaces considered in this paper are assumed to be real.

Let $S$ be a normed vector space.  
The distance from $z\in S$ to the set $Z \subset S$ we denote by 
\amc{$\dist(z,Z):=\inf\{\|z-y\|_S: y \in Z \}$.} 
We denote an open ball of radius $r$ around $Z \subset S$ by 
$B_{r,S}(Z):=\{y \in X:\dist(y,Z)<r\}$, and we set also $B_{r,S}(x):=B_{r,S}(\{x\})$ for $x \in X$, and $B_{r,S}:=B_{r,S}(0)$. 
If $S =X$ (the state space of the system), we write for short $B_{r}(Z):=B_{r,X}(Z)$, $B_{r}(x):=B_{r,X}(x)$, etc.

Denote by $\K$ the class of continuous strictly increasing functions $\gamma:\R_+\to\R_+$, satisfying $\gamma(0)=0$.
$\Kinf$ denotes the set of unbounded functions from $\K$.

%We will exploit the following classes of comparison functions:
%\index{comparison!functions}
%\index{class!$\K$}
%\index{class!$\LL$}
%\index{class!$\KL$}
%\index{class!$\PD$}
%\index{class!$\Kinf$}
%\index{function!positive definite}
%\begin{equation*}
%\begin{array}{ll}
%{\PD} &:= \left\{\gamma \in C(\R_+): \gamma(0)=0 \mbox{ and } \gamma(r)>0 \mbox{ for } r>0  \right\}, \\
%{\K} &:= \left\{\gamma \in \PD : \gamma \mbox{ is strictly increasing}   \right\},\\
%%{\K} &:= \left\{\gamma:\R_+\rightarrow\R_+: \gamma\mbox{ is continuous, }\gamma(0)=0\right.\\
%%&\phantom{aaaaaaaaaaaaaaaaaaa}\left.\text{and strictly increasing}\right\}\\
%{\K_{\infty}}&:=\left\{\gamma\in\K: \gamma\mbox{ is unbounded}\right\},\\
%{\LL}&:=\{\gamma\in C(\R_+): \gamma\mbox{ is strictly decreasing with } \lim\limits_{t\rightarrow\infty}\gamma(t)=0 \},\\
%{\KL} &:= \left\{\beta \in C(\R_+\times\R_+,\R_+): \beta(\cdot,t)\in{\K}\ \ \forall t \geq 0, \  \beta(r,\cdot)\in {\LL}\ \ \forall r > 0\right\}.
%\end{array}
%\end{equation*}
%Functions of class $\PD$ are also called \emph{positive definite functions}.
%Functions of class $\K$ will be frequently called also class $\K$ functions or simply $\K$-functions. The same convention we use with other classes of comparison functions.

For normed vector spaces $X,U$, denote by $L(X,U)$ the space of bounded linear operators from $X$ to $U$.
We endow $L(X,U)$ with the standard operator norm 
$\|A\|:=\sup_{\|x\|_X=1}\|Ax\|_U$.
We write for short $L(X):=L(X,X)$. 
\amc{By $C(X,U)$ we denote the space of continuous maps from $X$ to $U$. Similarly, by $C(\R_+,X)$ we understand the space of continuous maps from $\R_+$ to $X$.} 
\amc{The domain of definition, kernel, and image of an operator $A$ we denote by $D(A)$, $\ker(A)$, and $\im(A)$ respectively.} 
By $\sigma(A)$, we denote the spectrum of a closed operator $A:D(A)\subset X \to X$, and by $\rho(A)$ the resolvent set of $A$.  
We denote by $\omega_0(T)$ the growth bound of a $C_0$-semigroup $T$. 

Let $X$ be a Banach space, and let $I$ be a closed subset of $\R$. We define for $p\in[1,\infty)$ the following spaces of vector-valued functions
\begin{eqnarray*}
M(I,X) &:=& \{f: I \to X: f \text{ is strongly measurable} \},\\
L^p(I,X) &:=& \Big\{f \in M(I,X): \|f\|_{L^p(I,X)}:=\Big(\int_I\|f(s)\|^p_Xds\Big)^{\frac{1}{p}} < \infty \Big\},\\
L^p_{\loc}(\R_+,X) &:=& \{\amc{f|_{[0,t]}} \in L^p([0,t],X)\quad\forall t>0\},\\
L^\infty(I,X) &:=& \{f \in M(I,X): \|f\|_{L^\infty(I,X)}:=\esssup_{s\in I}\|f(s)\|_X < \infty \},\\
L^\infty_{\loc}(I,X) &:=& \{f \in L^\infty([0,t],X)\quad\forall t>0\}.
\end{eqnarray*}

Denote also $L^p(a,b)$ $:=L^p([a,b],\R)$, where $p\in[1,\infty]$. The space $H^{k}(a,b)$, $k\in\N$, is a Sobolev space of functions $u \in L^2(a,b)$, such that for each natural $j\leq k$, the weak derivative $u^{(j)}$ exists and belongs to $L^2(a,b)$. $H^{k}(a,b)$ is endowed with the norm 
$u\mapsto \Big(\sum_{j \leq k}\int_a^b{\big| u^{(j)} (x)\big|^2 dx} \Big)^{\frac{1}{2}}$. 
	$H^{k}_0(a,b)$ denotes the closure of smooth functions with compact support in $(a,b)$ in the norm of $H^{k}(a,b)$,\quad $k\in\N$.

\section{General class of systems}

We start with a general definition of a control system \amc{that we adopt from \cite{MiP20}.}
\index{control system}
\begin{definition}
\label{Steurungssystem}
Consider the triple $\Sigma=(X,\Uc,\phi)$ consisting of 
\index{state space}
\index{space of input values}
\index{input space}
\begin{enumerate}[label=(\roman*)]  
    \item A normed vector space $(X,\|\cdot\|_X)$, called the \emph{state space}, endowed with the norm $\|\cdot\|_X$.
    %\item A \emph{set of input values} $U$, which is a nonempty subset of a certain normed vector space.
    %\item \amc{A normed vector \emph{space of input values} $U$.}
    \item A normed vector \emph{space of inputs} $\Uc \subset \{u:\R_+ \to U\}$          
endowed with a norm $\|\cdot\|_{\Uc}$, where $U$ is a normed vector \emph{space of input values}.
We assume that the following two axioms hold:
                    
\emph{The axiom of shift invariance}: for all $u \in \Uc$ and all $\tau\geq0$ the time
shift $u(\cdot + \tau)$ belongs to $\Uc$ with \mbox{$\|u\|_\Uc \geq \|u(\cdot + \tau)\|_\Uc$}.
%the latter is used in the proof of Lemma 6.

\emph{The axiom of concatenation}: for all $u_1,u_2 \in \Uc$ and for all $t>0$ the \emph{concatenation of $u_1$ and $u_2$ at time $t$}, defined by
\begin{equation}
\amc{\big(\ccat{u_1}{u_2}{t}\big)(\tau):=}
\begin{cases}
u_1(\tau), & \text{ if } \tau \in [0,t], \\ 
u_2(\tau-t),  & \text{ otherwise},
\end{cases}
\label{eq:Composed_Input}
\end{equation}
belongs to $\Uc$.

    %\item A map $\phi:D_{\phi} \to X$, $D_{\phi}\subseteq \R_+ \times X \times \Uc$ (called \emph{transition map}), so that for all $(x,u)\in X \tm \Uc$ there is a $t>0$ so that $[0,t]\tm \{(x,u)\} \subset D_{\phi}$.
    \item A map $\phi:D_{\phi} \to X$, $D_{\phi}\subseteq \R_+ \times X \times \Uc$ (called \emph{transition map}), such that for all $(x,u)\in X \tm \Uc$ it holds that $D_{\phi} \cap \big(\R_+ \times \{(x,u)\}\big) = [0,t_m)\tm \{(x,u)\} \subset D_{\phi}$, for a certain $t_m=t_m(x,u)\in (0,+\infty]$.
		
		The corresponding interval $[0,t_m)$ is called the \emph{maximal domain of definition} of $t\mapsto \phi(t,x,u)$.
		
\end{enumerate}
The triple $\Sigma$ is called a \emph{(control) system}, if the following properties hold:
\index{property!identity}

\begin{sysnum}
    \item\label{axiom:Identity} \emph{The identity property:} for every $(x,u) \in X \times \Uc$
          it holds that $\phi(0, x,u)=x$.
\index{causality}
    \item \emph{Causality:} for every $(t,x,u) \in D_\phi$, for every $\tilde{u} \in \Uc$, such that $u(s) =
          \tilde{u}(s)$ for all $s \in [0,t]$ it holds that $[0,t]\tm \{(x,\tilde{u})\} \subset D_\phi$ and $\phi(t,x,u) = \phi(t,x,\tilde{u})$.
    \item \label{axiom:Continuity} \emph{Continuity:} for each $(x,u) \in X \times \Uc$ the map $t \mapsto \phi(t,x,u)$ is continuous on its maximal domain of definition.
\index{property!cocycle}
        \item \label{axiom:Cocycle} \emph{The cocycle property:} for all
                  $x \in X$, $u \in \Uc$, for all $t,h \geq 0$ so that $[0,t+h]\tm \{(x,u)\} \subset D_{\phi}$, we have
\[
\phi\big(h,\phi(t,x,u),u(t+\cdot)\big)=\phi(t+h,x,u).
\]
\end{sysnum}

\end{definition}

\ifAndo
\mir{The following passage is not for paper.}
\amc{
To relate this definition to other concepts available in the literature, consider the following concept:
\begin{definition}
\label{def:strongly continuous nonlinear semigroup}
\index{semigroup!nonlinear}
Let $T(t) : X \to X$, $t \ge 0$, be a family of nonlinear maps.
A family $T:=\{ T(t): t\ge 0\}$ is called a \emph{strongly continuous nonlinear semigroup} if the following holds:
\begin{itemize}
	\item[(i)] For all $x\in X$ it holds that $T(0)(x) =x$.
	\item[(ii)] For all $t_1,t_2\geq 0$ it holds that $T(t_1)\big(T(t_2)(x)\big) = T(t_1+t_2)(x)$.
	\item[(iii)] For each $x\in X$ the map $t\mapsto T(t)(x)$ is continuous.
\end{itemize}
\end{definition}

Take the family $\sg{T}$ as in Definition~\ref{def:strongly continuous nonlinear semigroup}.
Setting $\Uc:=\{0\}$, and defining $\phi(t,x,0):=T(t)(x)$, one can see that $\Sigma:=(X,\{0\},\phi)$ is a control system according to
Definition~\ref{Steurungssystem}.
Indeed, the axioms (i)-(iii) of Definition~\ref{Steurungssystem}, as well as causality, trivially hold. The identity property, continuity of $\phi$ w.r.t. time as well as the cocycle property correspond directly to the axioms (i)-(iii) of
Definition~\ref{def:strongly continuous nonlinear semigroup}.

Abstract linear control systems, considered in Section~\ref{sec:Abstract linear control systems}, are also a special case of the control systems that we consider here.
}
\fi

Definition~\ref{Steurungssystem} can be viewed as a direct generalization, and a unification of the concepts of strongly continuous nonlinear semigroups \cite{CrP69, CrL71} with abstract linear control systems \cite{Wei89b}.

This class of systems encompasses control systems generated by ordinary
differential equations (ODEs), switched systems, time-delay systems,
evolution partial differential equations (PDEs), abstract differential
equations in Banach spaces and many others \cite[Chapter 1]{KaJ11b}.

\begin{definition}
\label{def:FC_Property} 
\index{forward completeness}
We say that a \amc{control system $\Sigma=(X,\Uc,\phi)$} is \emph{forward complete (FC)}, if 
$D_\phi = \R_+ \tm X\tm\Uc$, that is for every $(x,u) \in X \times \Uc$ and for all $t \geq 0$ the value $\phi(t,x,u) \in X$ is well-defined.
\end{definition}

Forward completeness alone does not imply, in general, the existence of any uniform bounds on the trajectories emanating from bounded balls that are subject to uniformly bounded inputs \cite[Example 2, p. 1612]{MiW18b}. Systems exhibiting such bounds deserve a special name.
\begin{definition}
\label{def:BRS}
\index{bounded reachability sets}
\index{BRS}
We say that \emph{\amc{a control system $\Sigma=(X,\Uc,\phi)$}  has bounded reachability sets (BRS)}, if for any $C>0$ and any $\tau>0$ it holds that 
\[
\sup\big\{
\|\phi(t,x,u)\|_X : \|x\|_X\leq C,\ \|u\|_{\Uc} \leq C,\ t \in [0,\tau]\big\} < \infty.
\]
\end{definition}

For a wide class of control systems, the boundedness of a solution implies the possibility of prolonging it to a larger interval, see \cite[Chapter 1]{KaJ11b}. Next, we formulate this property for abstract systems:
\begin{definition}
\label{def:BIC} 
\index{property!boundedness-implies-continuation}
\index{BIC}
We say that a \amc{control system $\Sigma=(X,\Uc,\phi)$}  satisfies the \linebreak \emph{boundedness-implies-continuation (BIC) property} if for each
$(x,u)\in X \tm \Uc$ with $t_m(x,u)<\infty$ it holds that 
\[
\limsup_{t \to t_m(x,u)-0}\|\phi(t,x,u)\|_X = \infty.
\]
\end{definition}

\section{Semilinear evolution equations with unbounded input operators}
\label{sec:Semilinear boundary control systems}

Consider \amc{a Cauchy problem for} infinite-dimensional evolution equations of the form
\begin{subequations}
\label{eq:SEE+admissible} 
\begin{eqnarray}
\dot{x}(t) & = & Ax(t) + B_2f(x(t),u(t)) + Bu(t),\quad t>0,  \label{eq:SEE+admissible-1}\\
x(0)  &=&  x_0, \label{eq:SEE+admissible-2}
\end{eqnarray}
\end{subequations}
where $A: D(A)\subset X \to X$ generates a strongly continuous semigroup $T=\sg{T}$ of boun\-ded linear operators on a Banach space $X$; $U$ is a \amc{Banach space} of input values, and $x_0\in X$ is a given initial condition. 
As the input space, we take $\Uc:=L^\infty(\R_+,U)$.

The map $f: X\tm U \to V$ is defined on the whole $X \tm U$ and maps to a Banach space $V$. 
Furthermore, $B\in L(U,X_{-1})$ and $B_2 \in L(V,X_{-1})$. Here the extrapolation space $X_{-1}$ is the closure of $X$ in the norm $x \mapsto \|(aI-A)^{-1}x\|_X$, $x \in X$, where $a \in \rho(A)$ (different choices of $a \in\rho(A)$ induce equivalent norms on $X$). 
\amc{Note that the operators $B$ and $B_2$ are unbounded, if they are understood as operators that map to $X$.}
%$B:U\to X$, $B_2:V\to X$   are in general unbounded operators, if we view them as the operators $B:U\to X$, $B_2:V\to X$.

%
%where $A: D(A)\subset X \to X$ generates a strongly continuous semigroup $T=\sg{T}$ of boun\-ded linear operators on a Banach space $X$, $U$ is a normed vector space of input values; $x_0\in X$ is a given initial condition. The map $f: X\tm U \to V$ is defined on the whole $X \tm U$, and $B:U\to X$, $B_2:V\to X$ are possibly unbounded operators, that belong however to $L(U,X_{-1})$ and $L(V,X_{-1})$ respectively, where $V$ is a Banach space.
%Here the extrapolation space $X_{-1}$ is the closure of $X$ in the norm $x \mapsto \|(aI-A)^{-1}x\|_X$, $x \in X$, where $a \in \rho(A)$ (different choices of $a \in\rho(A)$ induce equivalent norms on $X$). 

\subsection{Admissible input operators and mild solutions}
\label{sec:Admissible input operators and mild solutions}

First, consider the linear counterpart of the system \eqref{eq:SEE+admissible}.
\begin{subequations}
\label{eq:LEE+admissible} 
\begin{eqnarray}
\dot{x}(t) & = & Ax(t) + Bu(t),\quad t>0,  \label{eq:LEE+admissible-1}\\
x(0)  &=&  x_0, \label{eq:LEE+admissible-2}
\end{eqnarray}
\end{subequations}
for the same $A, B$ as above. As the image of $B$ does not necessarily lie in $X$, one has to be careful when defining the concept of a solution for \eqref{eq:LEE+admissible}. Since $B \in L(U,X_{-1})$, it is natural to consider the system 
\eqref{eq:LEE+admissible} on the space $X_{-1}$.
Note that the semigroup  $(T(t))_{t\ge 0}$ extends uniquely to a strongly continuous semigroup  $(T_{-1}(t))_{t\ge 0}$ on $X_{-1}$ whose generator $A_{-1}$ acting in $X_{-1}$ is an extension of $A$ with $D(A_{-1}) = X$, see, e.g.,\ \cite[Section II.5]{EnN00}.
Recall the definitions of the spaces $L^p$, $L^p_{\loc}$ from Section~\ref{sec:Introduction}.

%\mir{
%On p. 7, in the assumptions, it is not clear if $U$ is a
    %normed space or a Banach space. After (4) it is written
    %that it is a normed space. In Proposition 3.2 it is
    %written that it is a Banach space, but this is only an
    %assumption within Proposition 3.2. What is the point of
    %even considering $U$ that is not a Banach space?
%}

The mild solution of \eqref{eq:LEE+admissible} for any $x \in X$ and $u \in L^1_{\loc}(\R_+,U)$ is given by 
\[
\phi_L(t,x,u) = T(t)x + \int_0^t T_{-1}(t-s)Bu(s)ds,\quad t\geq 0.
\]
The integral term here, however, belongs in general to $X_{-1}$.

Thus, the existence and uniqueness of a mild solution depend on whether\linebreak $\int_0^t T_{-1}(t-s)Bu(s)ds \in X$. 
This leads to the following concept:
\begin{definition}
\label{def:q-admissibility}
\index{operator!$q$-admissible}
Let $q\in[1,\infty]$. The operator $B\in L(U,X_{-1})$ is called a \emph{$q$-admissible control operator} for $(T(t))_{t\ge 0}$, if
there is $t>0$ so that
\begin{eqnarray}
u\in L^q_{\loc}(\R_+,U) \qrq \int_0^t T_{-1}(t-s)Bu(s)ds\in X.
\label{eq:q-admissibility}
\end{eqnarray}
\end{definition}
Define for each $t\geq 0$ an operator $\Phi(t): L^q_{\loc}(\R_+,U) \to X_{-1}$ by
\[
\Phi(t)u := \int_0^t T_{-1}(t-s)Bu(s)ds.
\]
Note that as $B \in L(U,X_{-1})$, the operators $\Phi(t)$ are well-defined as maps from $L^1_{\loc}(\R_+,U)$ to $X_{-1}$ for all $t$. The next result 
(see \cite[Proposition 4.2]{Wei89b}, \cite[Proposition 4.2.2]{TuW09}) shows that $q$-admissibility of $B$ ensures that the image of $\Phi(t)$ is in $X$ for all $t \geq 0$ and $\Phi(t)\in L(L^q(\R_+,U),X)$ for all $t>0$.

\begin{proposition}%{(\cite[Proposition 4.2]{Wei89b}, \cite[Proposition 4.2.2]{TuW09})}
\label{prop:Restatement-admissibility}
Let $X, U$ be Banach spaces and let $q \in [1,\infty]$ be given. Then $B \in L(U,X_{-1})$ is $q$-admissible if and only if
for all $t>0$ there is $h_t>0$ so that for all $u \in L^{q}_{\loc}(\R_+,U)$ it holds that $\Phi(t)u \in X$ and
%\cite{TuW09}, \cite{JaP04}, \cite{Wei89b} if
\begin{equation}
\label{eq:admissible-operator-norm-estimate}
\left\| \int_0^t T_{-1}(t-s)Bu(s)\,ds\right\|_X \le h_t \|u\|_{L^q([0,t],U)}.
\end{equation}
The function $t\mapsto h_t$ we assume wlog to be nondecreasing in $t$. 
\end{proposition}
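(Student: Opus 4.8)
The plan is to prove the substantive implication ($\Rightarrow$) and dispatch the converse in one line. For ($\Leftarrow$): the assumed estimate already presupposes that $\Phi(t)u\in X$ for every $t>0$ and every $u\in L^q_{\loc}(\R_+,U)$; picking any single $t>0$ gives precisely the defining property of $q$-admissibility, so nothing further is needed.

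For ($\Rightarrow$) I would fix, by $q$-admissibility, some $t_0>0$ with $\Phi(t_0)u\in X$ for all $u\in L^q_{\loc}(\R_+,U)$, and first upgrade this pointwise membership to a uniform bound at $t_0$ via the closed graph theorem. Regarding $\Phi(t_0)$ as a linear map $L^q([0,t_0],U)\to X$, I would check that its graph is closed. Since $T_{-1}$ is a $C_0$-semigroup, $M_0:=\sup_{\tau\in[0,t_0]}\|T_{-1}(\tau)\|_{L(X_{-1})}<\infty$, and with $B\in L(U,X_{-1})$ one gets $\|\Phi(t_0)u\|_{X_{-1}}\le M_0\|B\|\,\|u\|_{L^1([0,t_0],U)}\le C'\|u\|_{L^q([0,t_0],U)}$ by Hölder; hence $\Phi(t_0)$ is continuous into $X_{-1}$. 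As the embedding $X\hookrightarrow X_{-1}$ is continuous (because $(aI-A)^{-1}\in L(X)$ for $a\in\rho(A)$), any $u_n\to u$ in $L^q$ with $\Phi(t_0)u_n\to y$ in $X$ forces $y=\Phi(t_0)u$, so the graph is closed and the closed graph theorem yields a constant $h_{t_0}$ realizing the estimate at $t=t_0$. I expect this to be the conceptual heart and the main obstacle of the proof: extracting boundedness from mere well-definedness. Everything else is bookkeeping with the integral.

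The second step extends the estimate to every $t\le t_0$. Given $u\in L^q([0,t],U)$, I would zero-pad and right-shift it to $v\in L^q([0,t_0],U)$ by $v(s):=u(s-(t_0-t))$ for $s\ge t_0-t$ and $v(s):=0$ otherwise. A change of variables gives $\Phi(t)u=\Phi(t_0)v$, while $\|v\|_{L^q([0,t_0],U)}=\|u\|_{L^q([0,t],U)}$; hence $\Phi(t)u\in X$ with the same constant $h_{t_0}$. The third step reaches $t>t_0$ by iterating a cocycle-type identity: splitting the defining integral at $s=t-t_0$ and using $T_{-1}(t-s)=T_{-1}(t_0)T_{-1}(t-t_0-s)$ together with the shift of Step two yields
\[
\Phi(t)u = T_{-1}(t_0)\,\Phi(t-t_0)u + \Phi(t_0)\big(u(\cdot+(t-t_0))\big).
\]
By induction on $\lceil t/t_0\rceil$ one has $\Phi(t-t_0)u\in X$, so $T_{-1}(t_0)$ acts on it as $T(t_0)\in L(X)$ and the first summand stays in $X$, while the second lies in $X$ by Step one; thus $\Phi(t)u\in X$ for all $t>0$.

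Finally, to obtain the monotone gain, let $g(t)$ denote the operator norm of $\Phi(t):L^q([0,t],U)\to X$. The displayed identity gives the recursion $g(t)\le \|T(t_0)\|_{L(X)}\,g(t-t_0)+h_{t_0}$ for $t>t_0$, together with $g(t)\le h_{t_0}$ for $t\le t_0$; an induction over the intervals $(nt_0,(n+1)t_0]$ shows $g$ is finite and bounded on every bounded interval. Setting $h_t:=\sup_{0<s\le t} g(s)$ then produces a finite, nondecreasing function for which $\|\Phi(t)u\|_X\le h_t\|u\|_{L^q([0,t],U)}$ holds for all $t>0$, which completes the proof.
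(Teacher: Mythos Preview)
The paper does not give its own proof of this proposition; it simply cites \cite[Proposition 4.2]{Wei89b} and \cite[Proposition 4.2.2]{TuW09}. Your argument is correct and is exactly the standard proof found in those references: the closed graph theorem (using continuity of $\Phi(t_0)$ into $X_{-1}$ and the continuous embedding $X\hookrightarrow X_{-1}$) to get boundedness at a single time, the shift-and-pad trick for $t\le t_0$, and the cocycle identity with induction for $t>t_0$.
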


An important consequence of Proposition~\ref{prop:Restatement-admissibility} is that well-posedness (and thus forward completeness) of the system \eqref{eq:LEE+admissible} already implies the boundedness of reachability sets property for \eqref{eq:LEE+admissible}, with a bound given by \eqref{eq:admissible-operator-norm-estimate}.

As $t\mapsto h_t$ is nondecreasing in $t$, there is a limit $h_0:=\lim_{t\to +0}h_t \geq 0$, which is not necessarily zero. 
Operators for which $h_0=0$ deserve a special name.
\begin{definition}
\label{def:zero-class-admissibility} 
Let $q \in [1,\infty]$. A $q$-admissible operator $B\in L(U, X_{-1})$ is called \emph{zero-class $q$-admissible}, if the constants $(h_t)_{t>0}$ can be chosen such that $h_0=0$.
\end{definition}
All $B \in L(U,X)$ are zero-class 1-admissible. If $X$ is reflexive, then $1$-admissible operators are necessarily bounded.
At the same time, there are unbounded zero-class admissible operators, see Proposition~\ref{prop:ISS-analytic-systems}. Consider \cite[Examples 3.8, 3.9]{JPP09} for unbounded admissible observation operators that are not zero-class admissible.

The above considerations motivate us to impose
\begin{ass}
\label{ass:Admissibility} 
The operator $B\in L(U,X_{-1})$ is $\infty$-admissible, and the map $(t,u) \mapsto \Phi(t)u$ is continuous on $\R_+\tm L^\infty(\R_+,U)$.

In particular, this assumption holds if $B$ is a $q$-admissible operator with $q<\infty$, see \cite[Proposition 2.3]{Wei89b}.
\end{ass}

To define the concept of a mild solution, we also require the following:
\begin{ass}
\label{ass:Integrability} 
%Let Assumption~\ref{ass:Admissibility} hold. 
We assume that $B_2$ is zero-class $\infty$-admissible and for all $u\in L^\infty(\R_+,U)$ and any $x\in C(\R_+,X)$ the map $s\mapsto f\big(x(s),u(s)\big)$ is in $L^\infty_{\loc}(\R_+,V)$.

Due to \cite[Proposition 2.5]{JNP18}, these conditions ensure that for above $x,u$ the map 
\begin{eqnarray}
\label{eq:Integrability-map} 
t \mapsto \int_0^t T_{-1}(t-s)B_2f\big(x(s),u(s)\big)ds
\end{eqnarray}
is well-defined and continuous on $\R_+$.
\end{ass}

\begin{remark}
\label{rem:Validity-of-integrability-assumption} 
Assumption~\ref{ass:Integrability} holds, in particular, if $B_2 \in L(V,X)$, and
\begin{enumerate}[label=(\roman*)]
	\item $f(x,u)=g(x) + Ru$, $x \in X$, $u\in U$, where $R\in L(U,V)$, and $g$ is continuous on $X$.
Indeed, for a continuous $x$, the map	$s\mapsto g\big(x(s)\big)$ is continuous either, and thus Riemann integrable. 
The map $s\mapsto T(t-s) B_2Ru(s)$ is Bochner integrable for any $u \in L^1_{\loc}(\R_+,U)$ by \cite[Proposition 1.3.4]{ABH11}, \cite[Lemma 10.1.6]{JaZ12}. 
This ensures that Assumption~\ref{ass:Integrability} holds.

\item If $f$ is continuous on $X \tm U$, and $u$ is piecewise right-continuous, then the map 
 $s\mapsto f\big(x(s),u(s)\big)$ is also piecewise right-continuous, and thus it is Riemann integrable.

\item (ODE systems). Let $X=\R^n$, $U=\R^m$, $A=0$ (and thus $T(t)=\id$ for all $t$), $B_2=\id$, $B=0$, and $f$ be continuous on $X \tm U$. With these assumptions the equations \eqref{eq:SEE+admissible} take the form
\begin{align}
\label{eq:ODE}
\dot{x} = f(x,u).
\end{align}
Then for each $u\in L^\infty(\R_+,U)$ and each $x \in C(\R_+,X)$ the map $s\mapsto f(x(s),u(s))$ is Lebesgue integrable, and thus Assumption~\ref{ass:Integrability} holds.

Indeed, as $x$ is a solution of \eqref{eq:ODE} on $[0,\tau)$, $x$ is continuous on $[0,\tau)$. 
By assumptions, $u$ is measurable on $[0,\tau)$, and $f$ is continuous on $\R^n \tm \R^m$. Arguing similarly to  
\cite[Proposition 7]{RoF10} (where it was shown that a composition of a continuous and measurable function defined on a measurable set $E$ is measurable on $E$), we see that the map $q : [0,\tau)\to\R^n$, $q(s):= f(x(s),u(s))$, is a measurable map.
As $u$ is essentially bounded, and $x$ and $f$ map bounded sets into bounded sets, $q$ is essentially bounded on $[0,\tau)$. 
Thus, $q \in L^{\infty}(\R_+,\R^n)$, and thus $q$ is integrable on $[0,\tau)$.
\end{enumerate}
\end{remark}

Next we define mild solutions of \eqref{eq:SEE+admissible}.
\begin{definition}[Mild solutions]
\label{def:Mild-solution}
\index{solution!mild}
Let Assumptions~\ref{ass:Admissibility}, \ref{ass:Integrability} hold and $\tau>0$ be given. 
A function $x \in C([0,\tau], X)$ is called a \emph{mild solution of \eqref{eq:SEE+admissible} on $[0,\tau]$} corresponding to certain $x_0\in X$ and $u \in L^\infty_{\loc}(\R_+,U)$, if $x$ solves the integral equation
\begin{align}
\label{eq:SEE+admissible_Integral_Form}
x(t)=T(t) x_0 + \int_0^t T_{-1}(t-s) B_2f\big(x(s),u(s)\big)ds + \int_0^t T_{-1}(t-s) Bu(s)ds. %\ x(t) \in X, u(t) \in U.
\end{align}
Here the integrals are Bochner integrals of $X_{-1}$-valued maps.
%, and $T_{-1}$ is an extension of the semigroup $T$ to the space $X_{-1}$, see \cite[p.126]{EnN00}.

We say that $x:\R_+\to X$ is a \emph{mild solution of \eqref{eq:SEE+admissible} on $\R_+$} corresponding to 
certain $x_0\in X$ and $u \in L^\infty_{\loc}(\R_+,U)$, if \amc{$x|_{[0,\tau]}$} is a mild solution of \eqref{eq:SEE+admissible} (with $x_0, u$) on $[0,\tau]$ for all $\tau>0$.
\end{definition}

\subsection{Local existence and uniqueness}
\label{sec:Local existence and uniqueness}

Assumptions~\ref{ass:Admissibility}, \ref{ass:Integrability} guarantee that the integral terms in \eqref{eq:SEE+admissible_Integral_Form}
are well-defined. To ensure the existence and uniqueness of mild solutions, we impose further restrictions on $f$.

Recall the notation $B_{C,U} = \{v \in U:\|v\|_U < C\}$ and $B_{C} = \{v \in X:\|v\|_X < C\}$.
\begin{definition}
We call $f:X \times U \to V$
\begin{enumerate}[label=(\roman*)]
\index{function!Lipschitz continuous}
	\item \emph{Lipschitz continuous  (with respect to the first argument) on bounded subsets of $X$} if for any
$C>0$ there is $L(C)>0$, such that $\forall x,y \in B_C$, $\forall v \in B_{C,U}$ it holds that
\begin{eqnarray}
\|f(y,v)-f(x,v)\|_V \leq L(C) \|y-x\|_X.
\label{eq:Lipschitz}
\end{eqnarray}	
	
  %\item  \emph{Lipschitz continuous  (with respect to the first argument) on bounded subsets of $X$, uniformly with respect to the other argument} if for any $C>0$ there is $L(C)>0$, such that \eqref{eq:Lipschitz} holds for all $x,y: \|x\|_X \leq C,\ \|y\|_X \leq C$, and all $v \in U$.
	\item \emph{uniformly globally Lipschitz continuous (with respect to the first argument)} if \eqref{eq:Lipschitz} holds for all $x,y \in X$, and all $v \in U$ with \amc{a constant $L$} that does not depend on $x,y,v$.
\end{enumerate}
\end{definition}

%\mir{Do we need all above ones in the paper?}

We omit the indication \q{with respect to the first argument} wherever this is clear
from the context. 

\ifAndo
\amc{
\mir{Only for the book}
The following example illustrates the differences between different
concepts of Lipschitz continuity.

\begin{example}
Let $X=U=V=\R$.
\begin{enumerate}[label=(\roman*)]
	\item $f: (x,u) \mapsto xu$ is Lipschitz continuous on bounded subsets, but not uniformly with respect to the second argument.
	\item $f: (x,u) \mapsto x^2 + u$ is Lipschitz continuous on bounded subsets of $X$, uniformly with respect to the second argument, but not uniformly globally Lipschitz.
	\item $f: (x,u) \mapsto \arctan x + u$ is uniformly globally Lipschitz continuous.
\end{enumerate}
\end{example}
}
\fi

For the well-posedness analysis, we rely on the following assumption on the nonlinearity $f$ in \eqref{eq:SEE+admissible}.

\begin{ass}
\label{Assumption1} 
The nonlinearity $f$ satisfies the following properties:
\begin{itemize}
    \item[(i)] $f: X \times U \to V$ is Lipschitz continuous on bounded subsets of $X$.
    \item[(ii)] $f(x,\cdot)$ is continuous for all $x \in X$.
		\item[(iii)] There exist $\sigma \in \Kinf$ and $c>0$ so that for all $u \in U$ the following holds:
\begin{eqnarray}
\|f(0,u)\|_V \leq \sigma(\|u\|_U) + c.
\label{eq:f0u_estimate}
\end{eqnarray}
\end{itemize}
\end{ass}

Recall the notation for the distances and balls in normed vector spaces, introduced in the end of Section~\ref{sec:Introduction}. 
Finally, for a set $\Sc \subset U$, denote the set of inputs with essential \amc{image} in $\Sc$ as $\Uc_\Sc$:
\begin{align}
\label{eq:Inputs-constrained}
\Uc_\Sc:=\{u\in\Uc: u(t) \in \Sc, \text{ for a.e. } t\in\R_+\}.
\end{align}

We start with the following sufficient condition for the existence and uniqueness of solutions of a system \eqref{eq:SEE+admissible} with inputs in $L^\infty(\R_+,U)$.

\amc{Recall the notation $h_0:=\lim_{t\to +0}h_t$, where $h_t$ is defined as in \eqref{eq:admissible-operator-norm-estimate}.}
\index{theorem!Picard-Lindel\"of}
\index{Picard-Lindel\"of theorem}
\begin{theorem}[Picard-Lindel\"of theorem]
\label{PicardCauchy}
Let Assumptions~\ref{ass:Admissibility}, \ref{ass:Integrability}, \ref{Assumption1} hold. 

Assume that $(T(t))_{t\ge 0}$ satisfies for certain $M \geq 1$, $\lambda>0$ the estimate
\begin{eqnarray}
\|T(t)\| \leq Me^{\lambda t},\quad t\geq 0.
\label{eq:Picard-Lindeloef-bounds-semigroup}
\end{eqnarray} 
For any compact set $Q \subset X$, any $r>0$, any bounded set $\Sc \subset U$, and any $\delta>0$, there is a time $t_1 = t_1(Q,r,\Sc,\delta)>0$, such that for any $w\in Q$, for any $x_0 \in W:=B_r(w)$, and for any $u\in\Uc_{\Sc}$ there is a unique mild solution of \eqref{eq:SEE+admissible} on $[0,t_1]$, and  $\phi([0,t_1],x_0,u) \subset B_{Mr+h_0\|u\|_{L^\infty([0,t_1],U)}+\delta}(w)$.
\end{theorem}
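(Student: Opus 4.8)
The plan is to realize the mild solution as the unique fixed point of the integral operator appearing in \eqref{eq:SEE+admissible_Integral_Form}, via the Banach fixed point theorem on a complete metric space of trajectories that stay close to $w$, choosing the existence time $t_1$ so small that the self-mapping and contraction estimates hold \emph{simultaneously for all admissible data}. The decisive point is that every constant can be frozen uniformly in $w\in Q$, $x_0\in B_r(w)$ and $u\in\Uc_\Sc$: since $Q$ is compact and $\Sc$ is bounded, I set $C_Q:=\sup_{w\in Q}\|w\|_X<\infty$ and $C_\Sc:=\sup_{v\in\Sc}\|v\|_U<\infty$, so that $\|u\|_{L^\infty(\R_+,U)}\le C_\Sc$ and the target radius $\rho:=Mr+h_0\|u\|_{L^\infty([0,t_1],U)}+\delta$ obeys $\rho\le\rho_{\max}:=Mr+h_0 C_\Sc+\delta$.

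First I would fix $w,x_0,u$ and introduce the operator $\Psi$ on
\[
\mathcal{M}:=\{x\in C([0,t_1],X):x(0)=x_0,\ \|x(t)-w\|_X\le\rho\ \ \forall t\in[0,t_1]\}
\]
by $(\Psi x)(t):=T(t)x_0+\int_0^t T_{-1}(t-s)B_2 f(x(s),u(s))\,ds+\Phi(t)u$. With the sup-norm metric, $\mathcal{M}$ is a nonempty (it contains $x\equiv x_0$, as $\|x_0-w\|_X<r\le Mr\le\rho$) complete metric space, and $\Psi x\in C([0,t_1],X)$ with $(\Psi x)(0)=x_0$ by strong continuity of $T$, by Assumption~\ref{ass:Admissibility} (continuity of $(t,u)\mapsto\Phi(t)u$), and by Assumption~\ref{ass:Integrability} (continuity of \eqref{eq:Integrability-map}). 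On $\mathcal{M}$ one has $\|x(s)\|_X\le C_Q+\rho_{\max}$ and $\|u(s)\|_U\le C_\Sc$, so by Assumption~\ref{Assumption1} the nonlinearity obeys the uniform bound $\|f(x(s),u(s))\|_V\le L(C)(C_Q+\rho_{\max})+\sigma(C_\Sc)+c=:K$, where $C:=\max\{C_Q+\rho_{\max},C_\Sc\}$ and $L(C)$ is the corresponding Lipschitz constant. This breaks the apparent circular dependence: $C$, $L(C)$ and $K$ are fixed a priori through $\rho_{\max}$ rather than through $\rho$.

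For the self-mapping property I would split $T(t)x_0-w=T(t)(x_0-w)+(T(t)w-w)$ and, denoting by $(\tilde h_t)_{t>0}$ the zero-class $\infty$-admissibility constants of $B_2$ (so $\lim_{t\to+0}\tilde h_t=0$) and using Proposition~\ref{prop:Restatement-admissibility} for both $B$ and $B_2$, estimate
\[
\|(\Psi x)(t)-w\|_X\le Me^{\lambda t}r+\sup_{v\in Q}\|T(t)v-v\|_X+\tilde h_t K+h_t\|u\|_{L^\infty([0,t],U)}.
\]
Thus $\|(\Psi x)(t)-w\|_X\le\rho$ holds once $M(e^{\lambda t}-1)r+\sup_{v\in Q}\|T(t)v-v\|_X+\tilde h_t K+(h_t-h_0)\|u\|_{L^\infty([0,t],U)}\le\delta$ on $[0,t_1]$. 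All four terms tend to $0$ as $t\to+0$: the first since $e^{\lambda t}\to1$; the second by \emph{uniform} strong continuity of $T$ at $0$ over the compact set $Q$ (a finite-subcover argument using $\sup_{t\in[0,1]}\|T(t)\|\le Me^{\lambda}<\infty$ together with pointwise strong continuity); the third by zero-class admissibility of $B_2$; the fourth since $h_t\to h_0$ and $\|u\|_{L^\infty}\le C_\Sc$. Hence a single $t_1=t_1(Q,r,\Sc,\delta)$ yields $\Psi(\mathcal{M})\subseteq\mathcal{M}$ and, at the fixed point, the claimed inclusion $\phi([0,t_1],x_0,u)\subset B_\rho(w)$.

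For the contraction, only the $B_2$-term survives in $(\Psi x)(t)-(\Psi y)(t)$, so \eqref{eq:Lipschitz} with constant $L(C)$ gives $\|(\Psi x)(t)-(\Psi y)(t)\|_X\le\tilde h_{t_1}L(C)\,\|x-y\|_{C([0,t_1],X)}$; shrinking $t_1$ so that $\tilde h_{t_1}L(C)\le\tfrac12$ makes $\Psi$ a contraction, and Banach's theorem yields a unique mild solution in $\mathcal{M}$. Uniqueness among \emph{all} mild solutions on $[0,t_1]$ (not only those in $\mathcal{M}$) follows by partitioning $[0,t_1]$ into finitely many subintervals short enough that the same contraction estimate forces coincidence on each — using $\tilde h_t\to0$ and the shift-invariance of the admissibility bound — and bootstrapping from $t=0$. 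I expect the main obstacle to be exactly this uniformity bookkeeping that renders $t_1$ independent of the data: establishing $\sup_{v\in Q}\|T(t)v-v\|_X\to0$ and freezing $L(C)$ and $K$ by $\rho_{\max}$ \emph{before} $t_1$ is selected, so that the self-mapping and contraction smallness conditions on $t_1$ can be satisfied together.
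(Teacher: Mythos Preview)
Your argument is correct and follows the same Banach fixed-point scheme as the paper, with the same decomposition $T(t)x_0-w=T(t)(x_0-w)+(T(t)w-w)$ and the same use of zero-class admissibility of $B_2$ for both the self-mapping and the contraction estimate. Two points of divergence are worth noting. First, the paper handles the compact set $Q$ in two stages: it proves the result for a single $w$, obtaining $t_1(w,r,\Sc,\delta)$, and then argues by contradiction via sequential compactness that $\inf_{w\in Q}t_1(w,r,\Sc,\delta)>0$. Your route is more direct: you absorb the $w$-dependence into the single term $\sup_{v\in Q}\|T(t)v-v\|_X$ and invoke uniform strong continuity of $T$ at $0$ on compact sets, which immediately yields a $t_1$ uniform over $Q$. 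This buys a cleaner one-pass argument; the paper's version has the advantage of making explicit why mere boundedness of $Q$ (as opposed to compactness) would not suffice. Second, you explicitly address uniqueness among \emph{all} mild solutions on $[0,t_1]$ via a bootstrapping/partition argument, whereas the paper's proof only delivers uniqueness within the invariant set $Y_t$ and tacitly defers global uniqueness to the later Lemma~\ref{lem:Equality-of-solutions}.
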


\begin{proof}
First, we show the claim for the case if $Q$ is a single point in $X$, that is, $Q=\{w\}$, for some $\omega\in X$. 
Pick any $C>0$ such that $W := B_r(w) \subset B_C$, and $\Uc_\Sc \subset B_{C,\Uc}$. Pick any $u \in \Uc_\Sc$. Also take any $\delta>0$, and consider the following sets (depending on the parameter $t>0$):
\begin{eqnarray}
\hspace{10mm}Y_{t}:= \big\{x \in C([0,t],X): \sup_{s \in[0,t]} \|x(s) - w\|_X\leq Mr + h_0\|u\|_{L^\infty([0,t],U)} + \delta \big\},
\label{eq:Y_T_Def}
\end{eqnarray}
endowed with the metric 
$\rho_{t}(x,y):=\sup_{s \in [0,t]} \|x(s)-y(s)\|_X$. 
As the sets $Y_t$ are closed subsets of the Banach spaces $C([0,t],X)$, for all $t>0$, the space $Y_t$ is a complete metric space.

Pick any $x_0 \in W$. We are going to prove that for small enough $t$, the spaces $Y_{t}$  are invariant under the operator
$\Phi_u$, defined for any $x \in Y_{t}$ and all $\tau \in [0,t]$ by
\begin{align}
\label{eq:Phi-map-PL-Theorem}
\Phi_u(x)(\tau) 
&\amc{:=} T(\tau)x_0 + \int_0^\tau T_{-1}(\tau-s)B_2f\big(x(s),u(s)\big)ds  + \int_0^\tau T_{-1}(\tau-s) Bu(s)ds.
\end{align}
By Assumptions~\ref{ass:Admissibility}, \ref{ass:Integrability}, the function $\Phi_u(x)$ is continuous for any $x \in Y_t$.
\ifnothabil	\sidenote{\mir{and by Theorem~\ref{Milde_Loesungen}}}\fi

Fix any $t>0$ and pick any $x \in Y_{t}$.
As $x_0 \in W=B_r(w)$, there is $a \in B_r$ such that $x_0=w+a$.
 
Then for any $\tau<t$, it holds that
\begin{align*}
\|\Phi_{u}(x)&(\tau)-w\|_X \\
&\leq   \Big\|T(\tau)x_0 - w\Big\|_X + \Big\|\int_0^\tau T_{-1}(\tau-s) Bu(s)ds\Big\|_X \\
&\qquad\qquad\qquad\qquad\qquad + \Big\|\int_0^\tau T_{-1}(\tau-s) B_2f(x(s),u(s))ds\Big\|_X \\
&\leq  \|T(\tau)(w+a) - w \|_X +  h_\tau \|u\|_{L^\infty([0,\tau],U)} \\
&\qquad\qquad\qquad\qquad\qquad + c_\tau \|f(x(\cdot),u(\cdot))\|_{L^\infty([0,\tau],V)} \\
			&\leq \|T(\tau)w - w\|_X + \|T(\tau)a\|_X + h_\tau \|u\|_{L^\infty([0,\tau],U)}\\
			&\quad+ c_\tau \|f(x(\cdot),u(\cdot)) - f(0,u(\cdot))\|_{L^\infty([0,\tau],V)} + c_\tau \|f(0,u(\cdot))\|_{L^\infty([0,\tau],V)}.
\end{align*}
Now for all $s\in[0,t]$
\begin{align*}
\|x(s)\|_X &\leq \|w\|_X+Mr+ h_0\|u\|_{L^\infty([0,t],U)} +\delta\\
&\leq M(\|w\|_X+r) + h_0 C+\delta\leq (M+h_0)C+\delta =:K.
\end{align*}
In view of Assumption~\ref{Assumption1}(iii), it holds that 
\[
\|f(0,u(s))\|_V \leq \sigma(\|u(s)\|_U) + c,\quad \text{ for a.e. } s \in [0,t].
\]
As $M\geq 1$, it holds that $K>C$, and the Lipschitz continuity of $f$ on bounded balls ensures that there is $L(K)>0$, such that for all $\tau\in[0,t]$ 			
\begin{align*}
%\dist(\Phi_{t}(x)(\tau),W)\leq &
\|\Phi_{u}(x)(\tau)-w\|_X &\leq \|T(\tau)w - w\|_X + Me^{\lambda t} r + h_t \|u\|_{L^\infty([0,t],U)} \\
					&\qquad\qquad+ c_\tau \big( L(K) \|x\|_{L^\infty([0,t],X)} + \sigma(\|u\|_{L^\infty([0,t],U)}) + c\big)\\
			&\leq \|T(\tau)w - w\|_X + Me^{\lambda t} r + h_t \|u\|_{L^\infty([0,t],U)} \\
					&\qquad\qquad+ c_t \big( L(K)K + \sigma(C) + c\big).
\end{align*}
Since $T$ is a strongly continuous semigroup, as $h_t\to h_0$ whenever $t\to +0$, and 
\amc{since $c_t\to 0$ as $t\to +0$, there exists $t_1$,} such that 
\[
\|\Phi_u(x)(t)-w\|_X \leq Mr+ h_0 \|u\|_{L^\infty([0,t_1],U)} +\delta,\quad \text{ for all } t \in [0,t_1].
\]
This means, that $Y_{t}$ is invariant with respect to $\Phi_u$ for all $t \in (0,t_1]$, and $t_1$ does not depend on the choice of $x_0 \in W$.

Now pick any $t>0$, $\tau \in [0, t]$, and any $x, y \in Y_{t}$. \amc{It holds} that
\begin{align*}
\|\Phi_u(x)(\tau) - \Phi_u(y)(\tau)\|_X 
&\leq \Big\|\int_0^\tau T_{-1}(\tau-s) B_2\big(f(x(s),u(s)) - f(y(s),u(s))\big)ds\Big\|_X \\
&\leq c_\tau \|f(x(\cdot),u(\cdot)) - f(y(\cdot),u(\cdot))\|_{L^\infty([0,\tau],V)} \\
&\leq c_t L(K)\rho_t(x,y) \\
&\leq \frac{1}{2} \rho_t(x,y),
\end{align*}
for $t \leq t_2$, where $t_2>0$ is a small enough real number, that does not depend on the choice of $x_0 \in W$.

\ifnothabil	\sidenote{\mir{Reference for Full book:\quad	Theorem~\ref{thm:Banach fixed point theorem}.}}\fi

According to the Banach fixed point theorem, there exists a unique solution of $x(t)=\Phi_u(x)(t)$ on $[0,\min\{t_1,t_2\}]$, which is a mild solution of \eqref{eq:SEE+admissible}.

\textbf{General compact $Q$.} Till now, we have shown that for any $w\in Q$, any $r>0$, any bounded set $\Sc \subset U$, and any $\delta>0$, there is a time $t_1 = t_1(w,r,\Sc,\delta)>0$ (that we always take the maximal possible), such that for any $x_0 \in W:=B_r(w)$, and for any $u\in\Uc_{\Sc}$ there is a unique solution of \eqref{eq:SEE+admissible} on $[0,t_1]$, and it lies in the ball $B_{Mr+h_0\|u\|_{L^\infty([0,t_1],U)}+\delta}(w)$.

\amc{It remains to show} that $t_1$ can be chosen uniformly in $w \in Q$, that is\linebreak $\inf_{w \in Q} t_1(w,r,\Sc,\delta)>0$. 
Let this not be so, that is, $\inf_{w \in Q} t_1(w,r,\Sc,\delta)=0$. Then there is a sequence 
$(w_k) \subset Q$, such that the corresponding times $\big(t_1(w_k,r,\Sc,\delta)\big)_{k\in\N}$ monotonically decay to zero.
As $Q$ is compact, there is a converging subsequence of $(w_k)$, converging to some $w^* \in Q$. 
However, $t_1(w^*,r,\Sc,\delta)>0$, which easily leads to a contradiction.
\end{proof}

\begin{remark}
\label{rem:Picard-Lindeloef theorem} 
The technique of proving the \amc{Picard-Lindel\"of theorem is quite classical}. Note however, that here we need to tackle the influence of unbounded input operators, and also we provide a uniform existence time for solutions that controls the maximal deviation of the trajectory from the given set of initial conditions, which is realized by the choice of the spaces $Y_t$ in \eqref{eq:Y_T_Def}. This leads to several changes in the proof of the invariance of $Y_t$ with respect to the operator $\Phi_u(x)$.
\end{remark}

\begin{corollary}[Picard-Lindel\"of theorem for zero-class admissible $B$ and quasi-contractive semigroups]
\label{cor:PL-for zero-class admissible operators and quasi-contractive semigroups} 
Let Assumptions~\ref{ass:Admissibility}, \ref{ass:Integrability}, \ref{Assumption1} hold.
Let also $B$ be zero-class admissible, and $T$ be a quasi-contractive strongly continuous semigroup, that is, there is $\lambda>0$ such that 
\begin{eqnarray}
\|T(t)\| \leq e^{\lambda t},\quad t\geq 0.
%\label{eq:Picard-Lindeloef-bounds-semigroup}
\end{eqnarray} 
For any bounded ball $W \subset X$ (with corresponding $w\in X$ and $r>0$: $W= B_r(w)$), any bounded set $\Sc \subset U$, and any $\delta>0$, there is a time $t_1 = t_1(W,\Sc,\delta)>0$, such that for any $x_0 \in W$ and any $u\in\Uc_{\Sc}$ there is a unique solution of \eqref{eq:SEE+admissible} on $[0,t_1]$, and it lies in the ball $B_{r+\delta}(w)$.
\end{corollary}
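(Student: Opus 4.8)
The plan is to recognize that this corollary is a direct specialization of Theorem~\ref{PicardCauchy}, obtained by fixing the two constants $M$ and $h_0$ appearing in its conclusion. First I would observe that the quasi-contractivity hypothesis $\|T(t)\| \leq e^{\lambda t}$ is precisely the semigroup estimate~\eqref{eq:Picard-Lindeloef-bounds-semigroup} with $M=1$. Second, zero-class admissibility of $B$ means, by Definition~\ref{def:zero-class-admissibility}, that the constants $(h_t)_{t>0}$ from~\eqref{eq:admissible-operator-norm-estimate} can be chosen so that $h_0:=\lim_{t\to+0}h_t=0$; in particular $B$ is $\infty$-admissible, so Assumption~\ref{ass:Admissibility} and the remaining standing assumptions hold, and Theorem~\ref{PicardCauchy} is applicable.

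Next I would apply Theorem~\ref{PicardCauchy} to the singleton $Q:=\{w\}$, which is trivially compact, together with the given $r$, $\Sc$, and $\delta$. This yields a time $t_1=t_1(\{w\},r,\Sc,\delta)>0$ such that for every $x_0\in W=B_r(w)$ and every $u\in\Uc_\Sc$ there is a unique mild solution of~\eqref{eq:SEE+admissible} on $[0,t_1]$, and moreover $\phi([0,t_1],x_0,u)\subset B_{Mr+h_0\|u\|_{L^\infty([0,t_1],U)}+\delta}(w)$.

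Finally, substituting $M=1$ and $h_0=0$ collapses the radius of the reachability ball, since $Mr+h_0\|u\|_{L^\infty([0,t_1],U)}+\delta=r+\delta$ independently of $u$. Hence the solution lies in $B_{r+\delta}(w)$, exactly as asserted, and setting $t_1(W,\Sc,\delta):=t_1(\{w\},r,\Sc,\delta)$ concludes the proof.

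There is essentially no hard step here: the entire content is the correct identification of the hypotheses of the corollary with the parameter choices $M=1$ and $h_0=0$ in the parent theorem. The only point that deserves a brief remark is that zero-class admissibility is a strengthening of the admissibility already required in Assumption~\ref{ass:Admissibility}, so that invoking Theorem~\ref{PicardCauchy} is legitimate.
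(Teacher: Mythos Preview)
Your proposal is correct and matches the paper's own proof, which simply states that the claim follows directly from Theorem~\ref{PicardCauchy}. You have spelled out the specialization $M=1$ and $h_0=0$ explicitly, which is exactly the intended argument.
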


\begin{proof}
The claim follows directly from Theorem~\ref{PicardCauchy}.
\end{proof}

\begin{remark}
\label{rem:Quasicontractive-SGR-Picard-Lindeloef} 
Without an assumption of quasicontractivity, Corollary~\ref{cor:PL-for zero-class admissible operators and quasi-contractive semigroups} does not hold. 
Consider the special case $f \equiv 0$ and $B\equiv 0$. Then the system \eqref{eq:SEE+admissible} is linear, and for a given $x_0\in X$ the solution of \eqref{eq:SEE+admissible} exists globally and equals $t\mapsto T(t)x_0$. Now take $w:=0$ and pick any $r>0$ and $t_1>0$. Then
\[
\sup_{\tau \in [0,t_1]}\sup_{\|x\|_X\leq r}\|T(\tau)x\|_X= r\sup_{\tau \in [0,t_1]}\|T(\tau)\|.
\]
Since $T$ is merely strongly continuous, the map $t\mapsto \|T(t)\|$ does not have to be continuous at $t=0$, and it may happen that $\lim_{t_1\to 0}\sup_{\tau \in [0,t_1]}\|T(\tau)\| >1$.
\ifExercises\mir{See, e.g., Exercise~\ref{ex:SGR-2019-4.1}. }\fi

Hence, in general, it is not possible to prove that the solution starting at arbitrary $x_0 \in B_r(w)$, will stay in $B_{r+\delta}(w)$ during a sufficiently small and uniform in $x_0 \in B_r(w)$ time. 
%%This is due to the fact that the map $t\mapsto \|T(t)\|$ is not continuous at $t=0$.
%However, if $B$ is zero-class $\infty$-admissible operator, and $T$ is a quasicontractive semigroup (that is for semigroups, for which in \eqref{eq:Picard-Lindeloef-bounds-semigroup} one can choose $M=1$) the solution starting at arbitrary $x_0 \in B_r(w)$ will stay in $B_{r+\delta}(w)$ during a sufficiently small and uniform in $x_0 \in B_r(w)$ time.
\end{remark}

The following example shows that Theorem~\ref{PicardCauchy} does not hold in general if $W$ is a bounded set (and not only a bounded ball over a compact set), even for linear systems governed by contraction semigroups on a Hilbert space.

\begin{example}
\label{examp:PL-difference-to-ODE-case} 
Let $X=\ell_2$, and consider a diagonal semigroup, defined by $T(t)x:=(e^{-kt}x_k)_k$, for all $x=(x_k)_k\in X$ and all $t\geq 0$. This semigroup is strongly continuous and contractive.
Consider a bounded and closed set $W:=\{x \in \ell_2:\ \|x\|_X =1\}$. 
Yet $\|T(t)e_k\|_X = e^{-kt}$, and thus for each $\delta\in(0,1)$ and for each time $t_1>0$, we can find $k\in\N$, such that 
$\|T(t_1)e_k\|_X<1-\delta$, which means that $T(t_1)e_k \notin B_\delta(W)$.
\end{example}

At the same time, a stronger Picard-Lindel\"of-type theorem can be shown for uniformly continuous semigroups (this encompasses, in particular, the case of infinite ODE systems, also called \q{ensembles}), which fully extends the corresponding result for ODE systems, see \cite[Chapter 1]{Mir23}.

\begin{theorem}[Picard-Lindel\"of theorem for uniformly continuous semigroups]
\label{thm:PicardCauchy-Uniformly-cont-semigroups}
Let Assumptions~\ref{ass:Admissibility}, \ref{ass:Integrability}, \ref{Assumption1} hold. 
Let further $T$ be a uniformly continuous semigroup (not necessarily quasicontractive).
For any bounded set $W \subset X$, any bounded set $\Sc \subset U$ and any $\delta>0$, there is a time $\tau = \tau(W,\Sc,\delta)>0$, such that for any $x_0 \in W$, and $u\in\Uc_{\Sc}$ there is a unique solution of \eqref{eq:SEE+admissible} on $[0,\tau]$, and it lies in $B_\delta(W)$.
\end{theorem}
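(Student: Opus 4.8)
The plan is to run the same contraction-mapping scheme as in the proof of Theorem~\ref{PicardCauchy}, but to exploit the extra regularity supplied by uniform continuity of $T$. First I would record its structural consequences. A $C_0$-semigroup is uniformly continuous exactly when its generator $A$ is bounded; in that case, for $a\in\rho(A)$ the operator $aI-A$ is boundedly invertible, so the extrapolation norm $x\mapsto\|(aI-A)^{-1}x\|_X$ is equivalent to $\|\cdot\|_X$ and hence $X_{-1}=X$. Consequently $B\in L(U,X_{-1})=L(U,X)$ is a genuinely bounded operator, and the crude estimate $h_t\le\|B\|\,\big(\sup_{r\in[0,t]}\|T(r)\|\big)\,t$ shows that $B$ is automatically zero-class admissible, i.e.\ $h_0=0$. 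This is precisely the feature that allows me to drop the $h_0\|u\|$ term present in the target ball of Theorem~\ref{PicardCauchy} (recall Remark~\ref{rem:Quasicontractive-SGR-Picard-Lindeloef} and Example~\ref{examp:PL-difference-to-ODE-case}, where its presence was unavoidable). In addition, uniform continuity gives $\|T(\tau)-\Id\|_{L(X)}\to 0$ as $\tau\to +0$, which I will use below.

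Next I would fix $C>0$ with $W\subset B_C$ and $\Sc\subset B_{C,U}$, pick an arbitrary $x_0\in W$ and $u\in\Uc_\Sc$, and introduce the complete metric space
\[
Y_t:=\Big\{x\in C([0,t],X):\sup_{s\in[0,t]}\|x(s)-x_0\|_X\le \tfrac{\delta}{2}\Big\},
\]
equipped with $\rho_t(x,y)=\sup_{s\in[0,t]}\|x(s)-y(s)\|_X$, together with the operator $\Phi_u$ from \eqref{eq:Phi-map-PL-Theorem}. On $Y_t$ one has the uniform bound $\|x(s)\|_X\le C+\tfrac{\delta}{2}=:K$, so the Lipschitz constant $L(K)$ from Assumption~\ref{Assumption1}(i) and the growth bound $\sigma(C)+c$ on $\|f(0,u(\cdot))\|_V$ from Assumption~\ref{Assumption1}(iii) are at hand. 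Every $x\in Y_t$ already satisfies $\dist(x(s),W)\le\|x(s)-x_0\|_X\le\tfrac{\delta}{2}<\delta$, hence takes values in $B_\delta(W)$, so it suffices to produce a fixed point in some $Y_\tau$.

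The core is the invariance estimate. For $x\in Y_t$ and $\tau\le t$ I would split $\|\Phi_u(x)(\tau)-x_0\|_X$ into three pieces: $\|T(\tau)x_0-x_0\|_X\le\|T(\tau)-\Id\|_{L(X)}\,C$; the $B_2f$-integral bounded by $c_\tau\big(L(K)K+\sigma(C)+c\big)$ using zero-class admissibility of $B_2$; and the $Bu$-integral bounded by $h_\tau C$. All three tend to $0$ as $\tau\to +0$ — the first by uniform continuity, the second since $c_\tau\to 0$, the third since $h_\tau\to h_0=0$ — and, crucially, every bound depends only on $C$, $K$, $\delta$, never on the particular $x_0\in W$ or $u\in\Uc_\Sc$. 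Hence a single $\tau_1>0$ forces the sum below $\tfrac{\delta}{2}$ for all such data, giving $\Phi_u(Y_t)\subseteq Y_t$ for $t\le\tau_1$. The contraction step is identical to that in Theorem~\ref{PicardCauchy}: $\|\Phi_u(x)(\tau)-\Phi_u(y)(\tau)\|_X\le c_t L(K)\rho_t(x,y)\le\tfrac12\rho_t(x,y)$ once $t\le\tau_2$, again uniformly in $x_0,u$. The Banach fixed point theorem then delivers, on $[0,\tau]$ with $\tau:=\min\{\tau_1,\tau_2\}$, a unique mild solution staying in $B_\delta(W)$, with $\tau$ independent of $x_0\in W$ and $u\in\Uc_\Sc$.

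I expect the only genuinely new point — and the place where the hypothesis is indispensable — to be the uniform-in-$x_0$ control of $\|T(\tau)x_0-x_0\|_X$ over the whole bounded set $W$. For a merely strongly continuous semigroup this quantity need not be small uniformly over a bounded (even closed) set, which is exactly the obstruction exhibited in Example~\ref{examp:PL-difference-to-ODE-case}; uniform continuity upgrades pointwise strong continuity to the operator-norm bound $\|T(\tau)-\Id\|_{L(X)}\to 0$, which removes the compactness of $Q$ needed in Theorem~\ref{PicardCauchy}. Everything else is a bounded-operator reprise of the classical Picard--Lindel\"of scheme.
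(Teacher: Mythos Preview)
Your proposal is correct and follows essentially the same route as the paper's proof: both first observe that uniform continuity forces $A\in L(X)$, hence $X_{-1}=X$ and $B\in L(U,X)$ is zero-class admissible, and then run the contraction-mapping argument with the crucial estimate $\|T(\tau)x_0-x_0\|_X\le \|T(\tau)-\Id\|\,C$ yielding uniformity over all $x_0\in W$. The only cosmetic difference is that the paper takes $Y_t=\{x\in C([0,t],X):\dist(x(s),W)\le\delta\ \forall s\}$ while you take the $x_0$-centered ball of radius $\tfrac{\delta}{2}$; since the invariance estimate in both cases reduces to bounding $\|\Phi_u(x)(\tau)-x_0\|_X$ by quantities depending only on $C$ and $\delta$, the two choices are interchangeable.
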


%\begin{proof}
%We omit the proof of this result. 
%\ifAndo\mir{The proof is commented.}\fi
%\end{proof}
%
%
%
%We start with the following sufficient condition for the existence and uniqueness of solutions of a system \eqref{eq:SEE+admissible} with inputs in $L^\infty(\R_+,U)$.
%\index{theorem!Picard-Lindel\"of}
%\index{Picard-Lindel\"of theorem}
%\begin{theorem}[Picard-Lindel\"of theorem]
%\label{PicardCauchy}
%Let Assumptions~\ref{ass:Admissibility}, \ref{ass:Integrability}, \ref{Assumption1} hold. 
%Define $h_0:=\lim_{t\to +0}h_t$, where $h_t$ are as in \eqref{eq:admissible-operator-norm-estimate}.
%
%Assume that $T$ is a strongly continuous semigroup, satisfying for certain $M \geq 1$, $\lambda>0$ the estimate
%\begin{eqnarray}
%\|T(t)\| \leq Me^{\lambda t},\quad t\geq 0.
%\label{eq:Picard-Lindeloef-bounds-semigroup}
%\end{eqnarray} 
%For any compact set $Q \subset X$, any $r>0$, any bounded set $\Sc \subset U$, and any $\delta>0$, there is a time $t_1 = t_1(Q,r,\Sc,\delta)>0$, such that for any $x_0 \in W:=B_r(w)$ for some $w \in Q$, and for any $u\in\Uc_{\Sc}$ there is a unique mild solution of \eqref{eq:SEE+admissible} on $[0,t_1]$, and it lies in the ball $B_{Mr+h_0\|u\|_{L^\infty([0,t_1],U)}+\delta}(w)$.
%\end{theorem}

\begin{proof}
First note that since $A \in L(X)$, 
for any $a \in \rho(A)$ the norm $x \mapsto \|(aI-A)^{-1}x\|_X$, $x \in X$, is equivalent to the original norm on $X$. Thus $X = X_{-1}$ up to the equivalence of norms. Hence, as $B \in L(U,X_{-1})$, then also $B \in L(U,X)$, and thus, in particular, $B$ is zero-class $\infty$-admissible operator.

Pick any $C>0$ such that $W \subset B_C$, and $\Uc_\Sc \subset B_{C,\Uc}$. 
Take also any $\delta>0$, and consider the following sets (depending on a parameter $t>0$):
\begin{eqnarray}
Y_{t}:= \big\{x \in C([0,t],X): \dist(x(t),W) \leq \delta \ \ \forall t \in [0,t]\big\},
%\hspace{10mm}Y_{t}:= \big\{x \in C([0,t],X): \sup_{s \in[0,t]} \|x(s) - w\|_X\leq Mr + h_0\|u\|_{L^\infty([0,t],U)} + \delta \big\},
\label{eq:Y_T_Def-uniform}
\end{eqnarray}
endowed with the metric $\rho_{t}(x,y):=\sup_{s \in [0,t]} \|x(s)-y(s)\|_X$, making them complete metric spaces.
%As the sets $Y_t$ are closed subsets of Banach spaces $C([0,t],X)$, $Y_t$ are complete metric spaces for all $t>0$.

Pick any $x_0 \in W$ and any $u \in \Uc_\Sc$. We are going to prove that for small enough $t$, the spaces $Y_{t}$  are invariant under the operator 
$\Phi_u$, defined for any $x \in Y_{t}$ and all $\tau \in [0,t]$ by \eqref{eq:Phi-map-PL-Theorem}.
%\begin{align*}
%\Phi_u(x)(\tau) 
%&= T(\tau)x_0 + \int_0^\tau \amc{T_{-1}(\tau-s)}B_2f\big(x(s),u(s)\big)ds  + \int_0^\tau T_{-1}(\tau-s) Bu(s)ds.
%\end{align*}
By Assumptions~\ref{ass:Admissibility}, \ref{ass:Integrability}, the function $\Phi_u(x)$ is continuous.
\ifnothabil	\sidenote{\mir{and by Theorem~\ref{Milde_Loesungen}}}\fi

Fix any $t>0$ and pick any $x \in Y_{t}$.  
Then for any $\tau<t$, it holds that
\begin{align*}
\dist(\Phi_u(x)(\tau),W) &\le \|\Phi_u(x)(\tau)-x_0\|_X \\
%\|\Phi_{t}(x)&(\tau)-w\|_X \\
&\leq   \Big\|T(\tau)x_0 - x_0\Big\|_X + \Big\|\int_0^\tau T_{-1}(\tau-s) Bu(s)ds\Big\|_X \\
&\qquad\qquad\qquad\qquad\qquad + \Big\|\int_0^\tau T_{-1}(\tau-s) B_2f(x(s),u(s))ds\Big\|_X \\
&\leq   \|T(\tau)-I \|_X \|x_0\|_X +  h_\tau \|u\|_{L^\infty([0,\tau],U)} \\
&\qquad\qquad\qquad\qquad\qquad + c_\tau \|f(x(\cdot),u(\cdot))\|_{L^\infty([0,\tau],V)} \\
			&\leq C \|T(\tau)-I \|_X  + h_\tau \|u\|_{L^\infty([0,\tau],U)}\\
			&\quad+ c_\tau \|f(x(\cdot),u(\cdot)) - f(0,u(\cdot))\|_{L^\infty([0,\tau],V)} + c_\tau \|f(0,u(\cdot))\|_{L^\infty([0,\tau],V)}.
\end{align*}
Now for all $s\in[0,t]$
\begin{align*}
\|x(s)\|_X &\leq C +\delta =: K.
\end{align*}
In view of Assumption~\ref{Assumption1}(iii), it holds that 
\[
\|f(0,u(s))\|_V \leq \sigma(\|u(s)\|_U) + c,\quad \text{ for a.e. } s \in [0,t].
\]
Now Lipschitz continuity of $f$ on bounded balls ensures that there is $L(K)>0$, such that for all $\tau\in[0,t]$ 			
\begin{align*}
%\dist(\Phi_{t}(x)(\tau),W)\leq &
\|\Phi_{t}(x)(\tau)-w\|_X 		
			&\leq C \|T(\tau)-I \|_X + h_t \|u\|_{L^\infty([0,t],U)} \\
					&\quad+ c_\tau \big( L(K) \|x\|_{L^\infty([0,t],X)} + \sigma(\|u\|_{L^\infty([0,t],U)}) + c\big)\\
			&\leq C \|T(\tau)-I \|_X + h_t \|u\|_{L^\infty([0,t],U)} \\
					&\quad+ c_t \big( L(K)K + \sigma(C) + c\big).
\end{align*}
Since $T$ is a uniformly continuous semigroup, $h_t\to 0$ as $t\to +0$, and $c_t\to 0$ as $t\to +0$, from this estimate it is clear that  there exists $t_1>0$, depending solely on $C$ and $\delta$, such that 
\[
\dist(\Phi_u(x)(t),W)\le\delta,\quad \text{ for all } t \in [0,t_1].
%\|\Phi_u(x)(t)-w\|_X \leq Mr+ h_0 \|u\|_{L^\infty([0,t_1],U)} +\delta,\quad \text{ for all } t \in [0,t_1].
\]
This means, that $Y_{t}$ is invariant with respect to $\Phi_u$ for all $t \in (0,t_1]$, and $t_1$ does not depend on the choice of $x_0 \in W$.
The rest of the proof is analogous to the proof of Theorem~\ref{PicardCauchy}.
%
%Now pick any $t>0$, $\tau \in [0, t]$, and any $x, y \in Y_{t}$. Then it holds that
%\begin{align*}
%\|\Phi_u(x)(\tau) - \Phi_u(y)(\tau)\|_X 
%&\leq \Big\|\int_0^\tau T(\tau-s)B_2\big(f(x(s),u(s)) - f(y(s),u(s))\big)ds\Big\|_X \\
%&\leq c_\tau \|f(x(\cdot),u(\cdot)) - f(y(\cdot),u(\cdot))\|_{L^\infty([0,\tau],V)} \\
%&\leq c_t L(K)\rho_t(x,y) \\
%&\leq \frac{1}{2} \rho_t(x,y),
%\end{align*}
%for $t \leq t_2$, where $t_2>0$ is a small enough real number, that does not depend on the choice of $x_0 \in W$.
%
%\ifnothabil	\sidenote{\mir{Reference for Full book:\quad	Theorem~\ref{thm:Banach fixed point theorem}.}}\fi
%
%According to Banach fixed point theorem, there exists a unique solution of $x(t)=\Phi_u(x)(t)$ on $[0,\min\{t_1,t_2\}]$, which is a weak solution of \eqref{eq:SEE+admissible}.
\end{proof}

\ifAndo
\mir{The following passage is not for paper.}
\amc{
\begin{remark}
\label{rem:Bounds-for-inputs-in-PL-theorem} 
In the above theorems, the existence time of solutions is uniform with respect to the initial values and inputs with norms bounded by $C$. In general, the existence time of solutions cannot be chosen to be uniform if we do not restrict the norms of the inputs. 
For example, consider the system
\begin{eqnarray*}
\dot{x}(t) =  u(t) x^2(t).
%\label{eq:}
\end{eqnarray*}
The maximal existence time of solutions with $x(0)=1$ and $u(t) \equiv k$ goes to zero as $k \to \infty$.
\end{remark}
}
\fi

\subsection{Well-posedness}
\label{sec:Global-Well-posedness}

\index{solution!extension}
Our next aim is to study the prolongations of solutions and their asymptotic properties.
%In this section, the input space is $\Uc:=L^\infty(\R_+,U)$.

\begin{definition}
Let $x_1(\cdot)$, $x_2(\cdot)$ be mild solutions of \eqref{eq:SEE+admissible} defined on the intervals $[0,t_1)$ and $[0,t_2)$ respectively, $t_1,t_2>0$.
We call $x_2$ an \emph{extension} of $x_1$ if $t_2>t_1$, and $x_2(t)=x_1(t)$ for all $t \in [0,t_1)$.
\end{definition}

\begin{lemma}
\label{lem:Equality-of-solutions} 
Let Assumptions~\ref{ass:Admissibility}, \ref{ass:Integrability}, \ref{Assumption1} hold.
%Let Assumption~\ref{Assumption1} hold. 
Take any $x_0 \in X$ and $u\in\Uc$. 
Any two solutions of \eqref{eq:SEE+admissible} coincide in their common domain of existence. 
\end{lemma}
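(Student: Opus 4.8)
The plan is to prove uniqueness by a standard Gronwall-type argument localized to a short time interval, then extend to the full common domain by a connectedness (continuation) argument. Let $x_1$ and $x_2$ be two mild solutions of \eqref{eq:SEE+admissible} corresponding to the same $x_0$ and $u$, defined on intervals $[0,t_1)$ and $[0,t_2)$, and set $J:=[0,\min\{t_1,t_2\})$. I want to show $x_1(t)=x_2(t)$ for all $t\in J$. The key structural fact is that both solutions satisfy the integral equation \eqref{eq:SEE+admissible_Integral_Form} with identical initial data $x_0$ and identical input $u$, so when I subtract the two equations the $T(t)x_0$ term and the $\int_0^t T_{-1}(t-s)Bu(s)\,ds$ term cancel exactly, leaving only the difference of the nonlinear terms.

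First I would fix any $\tau\in J$ and work on the compact interval $[0,\tau]$. Since $x_1,x_2\in C([0,\tau],X)$, their images are compact, hence contained in some ball $B_K$, and since $u\in\Uc_\Sc$ with $\Sc$ bounded, I can invoke Assumption~\ref{Assumption1}(i): the Lipschitz continuity of $f$ on bounded subsets yields a constant $L=L(K)$ such that $\|f(x_1(s),u(s))-f(x_2(s),u(s))\|_V\le L\|x_1(s)-x_2(s)\|_X$ for a.e.\ $s\in[0,\tau]$. Subtracting the integral equations and using the zero-class $\infty$-admissibility of $B_2$ (Assumption~\ref{ass:Integrability}) together with the norm estimate \eqref{eq:admissible-operator-norm-estimate}, I get, for each $t\in[0,\tau]$,
\begin{align*}
\|x_1(t)-x_2(t)\|_X
&\le c_t\,\big\|f(x_1(\cdot),u(\cdot))-f(x_2(\cdot),u(\cdot))\big\|_{L^\infty([0,t],V)}\\
&\le c_t\,L\,\sup_{s\in[0,t]}\|x_1(s)-x_2(s)\|_X,
\end{align*}
where $c_t$ denotes the admissibility constant for $B_2$.

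Because $B_2$ is zero-class admissible, $c_t\to 0$ as $t\to+0$, so there is $t^*>0$ (depending only on $L$, hence on $K$) with $c_{t^*}L\le \tfrac12$. Then the displayed inequality forces $\sup_{s\in[0,t^*]}\|x_1(s)-x_2(s)\|_X\le\tfrac12\sup_{s\in[0,t^*]}\|x_1(s)-x_2(s)\|_X$, whence $x_1\equiv x_2$ on $[0,t^*]$. This gives uniqueness near $t=0$; to cover all of $J$ I would use a continuation/connectedness argument. Let $s^*:=\sup\{t\in J: x_1=x_2 \text{ on } [0,t]\}$ and suppose for contradiction $s^*<\min\{t_1,t_2\}$. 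By continuity of $x_1,x_2$ we have $x_1(s^*)=x_2(s^*)=:y$; now I apply the same contraction estimate to the solutions shifted to start at $s^*$ (using the integral equation on $[s^*,s^*+h]$, where the semigroup acts on the common value $y$ and the input is $u(s^*+\cdot)$), obtaining a new small contraction time independent of $s^*$ up to the bound $K$. This shows $x_1=x_2$ on a slightly larger interval $[0,s^*+h]$, contradicting the definition of $s^*$. Hence $s^*=\min\{t_1,t_2\}$ and the two solutions agree on their common domain.

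The main obstacle is making the localization argument uniform so that the contraction time $t^*$ does not shrink to zero as one approaches the endpoint of the common interval. This is handled by noting that on the fixed compact interval $[0,\tau]$ the common bound $K$ (and hence the Lipschitz constant $L(K)$) is uniform, so a single $t^*$ works at every base point; the zero-class admissibility of $B_2$ is exactly what guarantees $c_t\to 0$ and thus the existence of such a uniform contraction time, which is the crucial ingredient distinguishing this unbounded-input setting from the classical bounded-operator case.
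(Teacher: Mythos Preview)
Your argument is correct and follows essentially the same route as the paper: local uniqueness via the contraction estimate built into the zero-class admissibility of $B_2$, followed by a continuation argument at the supremum of the coincidence set. The only cosmetic difference is that the paper simply invokes Theorem~\ref{PicardCauchy} as a black box for local uniqueness (its Step~(iv) is exactly your contraction inequality), whereas you redo that estimate explicitly; the shift-and-restart step is identical in both.
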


The proof is similar to the ODE case \cite[Lemma 1.13]{Mir23} as is omitted.

\ifAndo
\mir{The following passage is not for paper.}
\amc{
\begin{proof}
Let $\phi_1,\phi_2$ be two mild solutions of \eqref{eq:SEE+admissible} corresponding to $x_0 \in X$ and $u\in\Uc$, defined over $[0,t_1)$
and $[0,t_2)$ respectively. Assume that $t_1\leq t_2$ (the other case is analogous).

By local existence and uniqueness theorem, there is some positive $t_3 \leq t_1$ (we take $t_3$ to be the maximal of such times), such that $\phi_1$ and $\phi_2$ coincide on $[0,t_3)$. If $t_3 = t_1$, then the claim is shown. If $t_3 < t_1$, then by continuity $\phi_1(t_3)=\phi_2(t_3)$.
Now $\psi_1:t\mapsto \phi_1(t_3+t)$ and $\psi_2:t\mapsto \phi_2(t_3+t)$ are two mild solutions for the problem
\[
\dot{x}(t)  = Ax(t) + B_2 f(x(t),u(t+t_3)) + Bu(t_3+\cdot),\quad x(0)= \phi_1(t_3),
\]
By local existence and uniqueness, $\phi_1$ and $\phi_2$ coincide on $[0,t_3+\varepsilon)$ for some $\varepsilon>0$, which contradicts to the maximality of $t_3$.
Hence $\phi_1$ and $\phi_2$ coincide on $[0,t_1)$.
\end{proof}
}
\fi

\index{solution!maximal}
\index{solution!global}
\begin{definition}
A solution $x(\cdot)$ of \eqref{eq:SEE+admissible} is called 
\begin{enumerate}[label=(\roman*)]
	\item \emph{maximal} if there is no solution of \eqref{eq:SEE+admissible} that extends $x(\cdot)$,
	\item \emph{global} if $x(\cdot)$ is defined on $\R_+$.
\end{enumerate}

\end{definition}

A central property of the system \eqref{eq:SEE+admissible} is
\begin{definition}
\label{def:Well-posedness-finite-dim} 
\index{system!well-posed}
We say that the system \eqref{eq:SEE+admissible} is \emph{well-posed} if for every initial value $x_0 \in X$ and every external input $u \in \Uc$, there exists a unique maximal solution $\phi(\cdot,x_0,u):[0,t_m(x_0,u)) \rightarrow X$, where $0 < t_m(x_0,u) \leq \infty$.

We call $t_m(x_0,u)$ the \emph{maximal existence time} of a solution corresponding to $(x_0,u)$.
\end{definition}

\index{flow map}
\index{maximal existence time}

The map $\phi$, defined in Definition~\ref{def:Well-posedness-finite-dim}, and describing the evolution of the system \eqref{eq:SEE+admissible}, 
is called the \emph{flow map}, or just \emph{flow}.
The domain of definition of the flow $\phi$ is 
\[
D_\phi:=\cup_{x_0\in X,\ u\in\Uc}[0,t_m(x_0,u))\times\{(x_0,u)\}.
\] 
In the following pages, we will always deal with maximal solutions. We will usually denote the initial condition by $x \in X$.

\begin{theorem}[Well-posedness]
\label{thm:Global_well-posedness}
Let Assumptions~\ref{ass:Admissibility}, \ref{ass:Integrability}, \ref{Assumption1} hold. 
Then \eqref{eq:SEE+admissible} is well-posed.
\end{theorem}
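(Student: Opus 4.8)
The plan is to prove well-posedness by combining the local existence and uniqueness result (Theorem~\ref{PicardCauchy}) with a standard maximality argument via Zorn's lemma or, more concretely, by constructing the maximal solution as the supremum of all local solutions. First I would fix an arbitrary initial value $x_0 \in X$ and an arbitrary input $u \in \Uc$. Applying Theorem~\ref{PicardCauchy} with $Q = \{x_0\}$ a single point, any $r>0$, and $\Sc$ a bounded set containing the essential image of $u$ (which exists since $u \in L^\infty(\R_+,U)$), I obtain a time $t_1>0$ and a unique mild solution on $[0,t_1]$. This guarantees $t_m(x_0,u)>0$, so at least one local solution exists.

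Next I would define $t_m(x_0,u)$ as the supremum of all times $\tau>0$ such that a mild solution of \eqref{eq:SEE+admissible} exists on $[0,\tau]$. By Lemma~\ref{lem:Equality-of-solutions}, any two solutions agree on their common domain, so the family of all local solutions is consistent and can be glued together to define a single function $\phi(\cdot,x_0,u)$ on the half-open interval $[0,t_m(x_0,u))$. I would verify that this glued function is indeed a mild solution on every compact subinterval $[0,\tau] \subset [0,t_m(x_0,u))$: continuity follows since each piece is continuous and they agree on overlaps, and the integral equation \eqref{eq:SEE+admissible_Integral_Form} holds on each $[0,\tau]$ because it holds for the local solution whose domain contains $[0,\tau]$. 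Thus $\phi(\cdot,x_0,u)$ is a solution on $[0,t_m(x_0,u))$, and by construction it admits no proper extension, hence is the unique maximal solution.

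The uniqueness of the maximal solution follows directly from Lemma~\ref{lem:Equality-of-solutions}: if there were two distinct maximal solutions, they would coincide on their common domain by that lemma, and since neither extends the other, their domains must be equal, forcing the solutions to be identical. Finally, $0 < t_m(x_0,u) \leq \infty$ holds because the local result gives $t_m > 0$, while $t_m$ may be $+\infty$ if the solution extends globally. Since $x_0$ and $u$ were arbitrary, this establishes well-posedness in the sense of Definition~\ref{def:Well-posedness-finite-dim}.

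The main obstacle I anticipate is the gluing step: one must check carefully that concatenating local solutions produced by Theorem~\ref{PicardCauchy} (which is stated on fixed compact intervals $[0,t_1]$ with solutions starting at a given $x_0$) yields a genuine mild solution on the union, in particular that the integral equation with the unbounded operators $B$ and $B_2$ and the semigroup $T_{-1}$ is satisfied on the extended interval. This requires invoking the cocycle-type structure of the integral equation: a solution restarted at time $t_1$ from the value $\phi(t_1,x_0,u)$ with the shifted input $u(t_1+\cdot)$ must coincide, after the semigroup and convolution terms are split at $t_1$, with the continuation of the original solution. The admissibility assumptions (Assumptions~\ref{ass:Admissibility}, \ref{ass:Integrability}) ensure the integral terms remain in $X$ and depend continuously on time, so the splitting is legitimate; making this precise is the technically delicate part, though it is routine given the setup.
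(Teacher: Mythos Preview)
Your proposal is correct and follows essentially the same route as the paper: fix $(x_0,u)$, invoke Theorem~\ref{PicardCauchy} to guarantee at least one local solution, take the union of the domains of all solutions starting from $(x_0,u)$, and use Lemma~\ref{lem:Equality-of-solutions} to glue them into a single maximal solution. The paper in fact omits the proof entirely, referring to the ODE case.

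One small remark on your anticipated obstacle: the cocycle-type splitting you worry about is not actually needed here. All solutions in the family you are gluing already start at time $0$ from the same $x_0$ with the same $u$, so Lemma~\ref{lem:Equality-of-solutions} lets you patch them directly without ever restarting from an intermediate state; the integral equation \eqref{eq:SEE+admissible_Integral_Form} holds on each $[0,\tau]$ simply because it holds for whichever local solution already covers that interval. The cocycle argument (splitting the convolution at an intermediate time and restarting) is what you need for the BIC property in Proposition~\ref{prop:Unboundedness_finite_existence_time} and for Theorem~\ref{thm:SEE-as-control systems}, not for the present statement.
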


The proof is similar to the ODE case \cite[Theorem 1.16]{Mir23} as is omitted.

\ifAndo
\mir{The following passage is not for paper.}
\amc{
\begin{proof}
Pick any $x_0\in X$ and $u\in\Uc$.
Let $\Sc$ be the set of all solutions $\xi$ of \eqref{eq:SEE+admissible}, defined on the corresponding domain of definition $I_\xi=[0,t_{\xi})$.

Define $I:=\cup_{\xi\in\Sc}I_\xi = [0,t^*)$ for some $t^*>0$, which can be either finite or infinite.
For any $t\in I$ there is $\xi\in\Sc$, such that $t \in I_\xi$. 
Define $\xi_{\max}(t):=\xi(t)$.
Because of Lemma~\ref{lem:Equality-of-solutions}, the map $\xi_{\max}$ does not depend on the choice of $\xi$ above and thus is well-defined on $I$.

Furthermore, for each $t\in(0,t^*)$ there is $\xi \in\Sc$, such that $\xi_{\max} = \xi$ on $[0,t+\varepsilon) \subset I$, for a certain $\varepsilon>0$, and thus $\phi(\cdot,x_0,u):=\xi_{\max}$ is a solution of \eqref{eq:SEE+admissible} corresponding to $x_0,u$.
Finally, it is a maximal solution by construction.
\end{proof}
}
\fi

Now we show that well-posed systems \eqref{eq:SEE+admissible} are a special case of general control systems, introduced in Definition~\ref{Steurungssystem}.
\begin{theorem}
\label{thm:SEE-as-control systems} 
Let \eqref{eq:SEE+admissible} be well-posed. Then the triple $(X,\Uc,\phi)$, where $\phi$ is a flow map of \eqref{eq:SEE+admissible}, constitutes a control system in the sense of Definition~\ref{Steurungssystem}.
\end{theorem}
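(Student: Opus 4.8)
The plan is to verify each of the defining axioms of Definition~\ref{Steurungssystem} directly for the triple $(X,\Uc,\phi)$, where $\Uc=L^\infty(\R_+,U)$ and $\phi$ is the flow map supplied by the well-posedness Theorem~\ref{thm:Global_well-posedness}. First I would check that $\Uc$ qualifies as an admissible space of inputs: the shift invariance axiom holds because $\|u(\cdot+\tau)\|_{L^\infty} \le \|u\|_{L^\infty}$ (the essential supremum over a shifted tail is no larger), and the concatenation axiom holds because the concatenation $\ccat{u_1}{u_2}{t}$ of two essentially bounded functions is again essentially bounded, with essential supremum bounded by $\max$ of the two. These are routine but must be stated. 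I would also note that the domain-of-definition structure required in part (iii) of the definition, namely $D_\phi\cap(\R_+\times\{(x,u)\})=[0,t_m(x,u))\times\{(x,u)\}$ with $t_m\in(0,+\infty]$, is exactly what well-posedness guarantees, with $t_m(x,u)$ the maximal existence time.

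Next I would dispatch the four system axioms \ref{axiom:Identity}--\ref{axiom:Cocycle}. The identity property $\phi(0,x,u)=x$ is immediate from the integral equation \eqref{eq:SEE+admissible_Integral_Form}, since at $t=0$ both integrals vanish and $T(0)=I$. Continuity of $t\mapsto\phi(t,x,u)$ is built into the definition of a mild solution (Definition~\ref{def:Mild-solution} requires $x\in C([0,\tau],X)$), so axiom~\ref{axiom:Continuity} holds on the maximal domain. Causality follows because the integral equation on $[0,t]$ only involves the values $u(s)$ for $s\in[0,t]$; hence if $u$ and $\tilde u$ agree on $[0,t]$, the maps $s\mapsto\phi(s,x,u)$ and $s\mapsto\phi(s,x,\tilde u)$ satisfy the same fixed-point equation on $[0,t]$ and, by the uniqueness part of Lemma~\ref{lem:Equality-of-solutions}, coincide there.

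The only axiom requiring genuine work is the cocycle property \ref{axiom:Cocycle}. Here I would fix $x,u$ and $t,h\ge0$ with $[0,t+h]\times\{(x,u)\}\subset D_\phi$, and show that the restriction $\psi(\cdot):=\phi(t+\cdot,x,u)$ on $[0,h]$ solves the integral equation \eqref{eq:SEE+admissible_Integral_Form} corresponding to initial state $\phi(t,x,u)$ and shifted input $u(t+\cdot)$. The computation splits each integral over $[0,t+h]$ at $s=t$, applies the semigroup property $T_{-1}(t+h-s)=T_{-1}(h)T_{-1}(t-s)$ to the $[0,t]$-part, and recognises the $[0,t]$-contributions reassembled through $T(h)$ as $T(h)\phi(t,x,u)$; the substitution $s\mapsto t+s$ converts the $[t,t+h]$-parts into the integrals driven by $u(t+\cdot)$. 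Once $\psi$ is shown to satisfy this equation, uniqueness (Lemma~\ref{lem:Equality-of-solutions}) identifies it with $\phi(\cdot,\phi(t,x,u),u(t+\cdot))$, yielding the claimed identity.

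The main obstacle is purely bookkeeping in the cocycle step: one must handle the extrapolation-space-valued integrals carefully, justify the splitting and the semigroup shift under the Bochner integral on $X_{-1}$, and confirm that the shifted input $u(t+\cdot)$ again lies in $\Uc$ (which is precisely shift invariance, already established). No new analytic input is needed beyond Lemma~\ref{lem:Equality-of-solutions} and the structure of \eqref{eq:SEE+admissible_Integral_Form}; the argument is a direct transcription of the classical ODE cocycle proof, so I would state the key splitting and invoke uniqueness rather than grinding through every term.
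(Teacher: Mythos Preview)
Your proposal is correct and follows essentially the same route as the paper. The paper also concentrates the real work on the cocycle property via the same integral splitting and semigroup shift, dismissing the remaining axioms in one line (\q{fulfilled by construction}); the only point it makes more explicit than you do is that after splitting, the $[0,\tau]$-piece of the $Bu$-integral lies in $X$ by $\infty$-admissibility, so that the outer $T_{-1}(t)$ acting on it may be replaced by $T(t)$ before reassembling into $T(t)\phi(\tau,x,u)$.
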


\begin{proof}
\amc{The continuity axiom holds by the definition of a mild solution.
Let us check the cocycle property.} 

Take  any initial condition $x \in X$, any input $u \in \Uc$, and any $t,\tau\geq 0$, such that 
$[0,t+\tau]\tm \{(x,u)\} \subset D_{\phi}$.
Define an input $v$ by $v(r)=u(r+\tau)$, $r \geq 0$.

Due to \eqref{eq:SEE+admissible_Integral_Form}, we have:
\begin{align*}
\phi(t &+ \tau ,x,u) =  T(t+\tau)x + \int_0^{t+\tau} T_{-1}(t+\tau-s) B_2f(\phi(s,x,u),u(s))ds\\
&\qquad\qquad\qquad\qquad\qquad + \int_0^{t+\tau} T_{-1}(t+\tau-s) Bu(s)ds.
\end{align*}
As $T_{-1}(t)$ is a bounded operator, it can be taken out of the Bochner integral:
\begin{align*}
\phi(t &+ \tau ,x,u) =  T(t)T(\tau)x + T_{-1}(t)\int_0^{\tau} T_{-1}(\tau-s) B_2f(\phi(s,x,u),u(s))ds  \\
&\quad+ T_{-1}(t)\int_0^{\tau} T_{-1}(\tau-s) Bu(s)ds \\
&\quad + \int_{\tau}^{t+\tau} T_{-1}(t+\tau-s) B_2f(\phi(s,x,u),u(s))ds + \int_{\tau}^{t+\tau} T_{-1}(t+\tau-s) Bu(s)ds.
\end{align*}
As $B$ is $\infty$-admissible, we have that $\int_0^{\tau} T_{-1}(\tau-s) Bu(s)ds \in X$. 
Since $T_{-1}(\cdot)$ coincides with $T(\cdot)$ on $X$, we infer
\[
T_{-1}(t)\int_0^{\tau} T_{-1}(\tau-s) Bu(s)ds = T(t)\int_0^{\tau} T_{-1}(\tau-s) Bu(s)ds.
\]
Finally,
\begin{align*}
%\label{Int_AE}
\phi(t + \tau ,x,u) &= T(t) \phi(\tau,x,u) + \int_{0}^{t} T_{-1}(t-s) B_2 f(\phi(s+\tau,x,v),v(s))ds \\
&\qquad\qquad\qquad\qquad\qquad + \int_{0}^{t} T_{-1}(t-s) Bv(s)ds\\
 & =  \phi(t,\phi(\tau,x,u),v),
\end{align*}
and the cocycle property holds. 
\amc{The rest of the properties of control systems are fulfilled by construction.}
\end{proof}

%By $t \to a-0$ we denote the fact that $t$ approaches $a$ from the left.

\amc{We proceed with a proof of a boundedness-implies-continuation property.}
\begin{proposition}
\label{prop:Unboundedness_finite_existence_time}
Let Assumptions~\ref{ass:Admissibility}, \ref{ass:Integrability}, \ref{Assumption1} hold. 
Then \eqref{eq:SEE+admissible} has \amc{the} BIC property. 
\end{proposition}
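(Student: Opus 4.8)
The plan is to prove the contrapositive form of the BIC property (Definition~\ref{def:BIC}). Fix $(x,u)\in X\times\Uc$ with $t_m:=t_m(x,u)<\infty$ and assume, towards a contradiction, that $\limsup_{t\to t_m-0}\|\phi(t,x,u)\|_X<\infty$. Since $\phi(\cdot,x,u)$ is continuous on every compact subinterval of $[0,t_m)$, this bound near $t_m$ combines with boundedness on some $[0,t_m-\eta_0]$ to give $R:=\sup_{t\in[0,t_m)}\|\phi(t,x,u)\|_X<\infty$. The goal is then to show that the boundedness by $R$ forces the limit $x^\ast:=\lim_{t\to t_m-0}\phi(t,x,u)$ to exist in $X$; once this is done, I can restart the system at $x^\ast$ and prolong the solution past $t_m$, contradicting maximality.

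To establish the limit I would use the mild-solution identity \eqref{eq:SEE+admissible_Integral_Form} and handle its three summands separately. The term $T(t)x$ converges to $T(t_m)x$ by strong continuity. The boundary-input term $\int_0^t T_{-1}(t-s)Bu(s)\,ds=\Phi(t)u$ converges as $t\to t_m$ because, by Assumption~\ref{ass:Admissibility}, $t\mapsto\Phi(t)u$ is continuous on all of $\R_+$. The remaining term $\Psi(t):=\int_0^t T_{-1}(t-s)B_2 f(\phi(s,x,u),u(s))\,ds$ is the delicate one. First note that $g:=f(\phi(\cdot,x,u),u(\cdot))$ is essentially bounded on $[0,t_m)$: by Assumption~\ref{Assumption1} and the bound $R$ one has $\|g(s)\|_V\le L(\max\{R,\|u\|_{\Uc}\})R+\sigma(\|u\|_{\Uc})+c$ for a.e.\ $s$.

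The key step is to show $\Psi$ is Cauchy as $t\to t_m-0$. For small $\eta>0$ and $t\ge\eta$ I would split, using the semigroup law $T_{-1}(t-s)=T_{-1}(\eta)T_{-1}(t-\eta-s)$ together with $T_{-1}|_X=T$, into $\Psi(t)=T(\eta)\Psi(t-\eta)+\int_{t-\eta}^t T_{-1}(t-s)B_2 g(s)\,ds$. By Proposition~\ref{prop:Restatement-admissibility} and the zero-class $\infty$-admissibility of $B_2$ (Assumption~\ref{ass:Integrability}), the tail integral has norm at most $h^{B_2}_\eta\,\|g\|_{L^\infty([0,t_m))}$, which is uniformly small in $t$ for small $\eta$ since $h^{B_2}_\eta\to0$. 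For this fixed $\eta$, the map $t\mapsto T(\eta)\Psi(t-\eta)$ is uniformly continuous for $t$ near $t_m$: indeed $\Psi$ is continuous on the compact interval $[0,t_m-\eta/2]$ (apply Assumption~\ref{ass:Integrability} to a continuous extension of $\phi|_{[0,t_m-\eta/2]}$, which coincides with $\Psi$ there), and $T(\eta)$ is bounded. Combining the two facts gives $\|\Psi(t_1)-\Psi(t_2)\|_X\to0$ as $t_1,t_2\to t_m$, so $x^\ast=\lim_{t\to t_m-0}\phi(t,x,u)$ exists.

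Finally I would set $\phi(t_m,x,u):=x^\ast$, so that $\phi(\cdot,x,u)\in C([0,t_m],X)$, and apply Theorem~\ref{PicardCauchy} with $Q=\{x^\ast\}$ and $\Sc$ a bounded set containing the essential image of $u$ to obtain a unique mild solution $y$ of \eqref{eq:SEE+admissible} on some $[0,t_1]$, $t_1>0$, with initial state $x^\ast$ and input $u(t_m+\cdot)$. Concatenating $\phi(\cdot,x,u)$ on $[0,t_m]$ with $y(\cdot-t_m)$ on $[t_m,t_m+t_1]$ yields a continuous function that again satisfies \eqref{eq:SEE+admissible_Integral_Form} on $[0,t_m+t_1]$, which is checked by the same splitting of the Bochner integrals as in the proof of Theorem~\ref{thm:SEE-as-control systems}; this strictly extends the maximal solution and contradicts the definition of $t_m$. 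Hence $\limsup_{t\to t_m-0}\|\phi(t,x,u)\|_X=\infty$ whenever $t_m<\infty$. The main obstacle is precisely the convergence of $\Psi$: in an infinite-dimensional state space boundedness of the trajectory does not yield precompactness, so no limit can be extracted by compactness, and one must produce genuine convergence — which is exactly what the zero-class admissibility of $B_2$ delivers by making the last $\eta$ of the integral uniformly negligible.
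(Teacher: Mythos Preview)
Your argument is correct, but the paper's proof is considerably shorter and takes a different route. Rather than showing that $\phi(\cdot,x,u)$ actually converges as $t\to t_m-0$, the paper simply assumes $\liminf_{t\to t_m-0}\|\phi(t,x,u)\|_X<\infty$, extracts a sequence $t_k\to t_m$ with $\sup_k\|\phi(t_k,x,u)\|_X=:C<\infty$, and then invokes the \emph{uniform} existence time $\tau(C)>0$ delivered by Theorem~\ref{PicardCauchy} for initial data in $\clo{B_C}$ together with the cocycle property (Theorem~\ref{thm:SEE-as-control systems}): for $k$ large enough, $t_k+\tau(C)>t_m$, and the solution prolongs past $t_m$, a contradiction. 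No limit at $t_m$ is ever produced, and the zero-class admissibility of $B_2$ is used only implicitly through Theorem~\ref{PicardCauchy}.

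What each approach buys: the paper's argument is two lines once Theorem~\ref{PicardCauchy} is in hand, and in fact yields the stronger conclusion $\liminf_{t\to t_m-0}\|\phi(t,x,u)\|_X=\infty$ (hence also $\limsup=\infty$). Your argument, while longer, establishes the auxiliary fact that a bounded maximal trajectory has a limit in $X$ at its maximal time; the Cauchy estimate for $\Psi$ via the splitting $\Psi(t)=T(\eta)\Psi(t-\eta)+\text{tail}$ is a nice self-contained use of zero-class $\infty$-admissibility of $B_2$. Note, incidentally, that once you know $g\in L^\infty([0,t_m),V)$ you could shorten your step~3 by extending $g$ by zero and citing the continuity assertion behind Assumption~\ref{ass:Integrability} (i.e.\ \cite[Proposition~2.5]{JNP18}) directly, which gives continuity of $t\mapsto\int_0^t T_{-1}(t-s)B_2 g(s)\,ds$ on all of $[0,t_m]$ without the Cauchy argument.
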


\begin{proof}
\amc{Pick any $x \in X$, any $u \in \Uc$,} and consider the corresponding maximal solution $\phi(\cdot,x,u)$, defined on $[0,t_m(x,u))$.
Assume that $t_m(x,u)<+\infty$, but at the same time $\Liminf_{t\to t_m(x,u)-0}\|\phi(t,x,u)\|_X <\infty$.
Then there is a sequence $(t_k)$, such that $t_k \to t_m(x,u)$ as $k\to\infty$ and 
$\lim_{k\to\infty}\|\phi(t_k,x,u)\|_X <\infty$. \amc{Hence,} also $\sup_{k\in\N}\|\phi(t_k,x,u)\|_X =:C<\infty$.

Let $\tau(C)>0$ be a uniform existence time for the solutions starting in the ball $\clo{B_C}$ subject to inputs of a magnitude not exceeding $\|u\|$, which exists and is positive in view of  Theorem~\ref{PicardCauchy}.
Then the solution of \eqref{eq:SEE+admissible} starting in $\phi(t_k,x,u)$, corresponding to the input $u(\cdot + t_k)$, exists and is unique on $[0,\tau(C)]$ by Theorem~\ref{PicardCauchy}, and by the \amc{cocycle property, $\phi(\cdot,x,u)$ can be prolonged} to $[0,t_k +\tau(C))$, which (since $t_k\to t_m(x,u)$ as $k\to\infty$) contradicts to the maximality of the solution corresponding to $(x,u)$.

Hence $\Liminf_{t\to t_m(x,u)-0}\|\phi(t,x,u)\|_X =\infty$, which implies the claim.
\end{proof}

\ifAndo
\amc{

\mir{To exclude for the paper.}

As a corollary of Proposition~\ref{prop:Unboundedness_finite_existence_time}, we obtain that any uniformly bounded maximal solution of \eqref{eq:SEE+admissible} is a global solution.

\begin{corollary}[Boundedness-implies-continuation (BIC) property]
\label{cor:BIC-property}
Let Assumptions~\ref{ass:Admissibility}, \ref{ass:Integrability}, \ref{Assumption1} hold. 
The system \eqref{eq:SEE+admissible} satisfies the BIC property.
\end{corollary}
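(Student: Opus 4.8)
The plan is to observe that Corollary~\ref{cor:BIC-property} is, up to wording, identical to Proposition~\ref{prop:Unboundedness_finite_existence_time}. Indeed, by Definition~\ref{def:BIC} the BIC property asserts that for every $(x,u) \in X \times \Uc$ with $t_m(x,u) < \infty$ one has $\limsup_{t \to t_m(x,u)-0}\|\phi(t,x,u)\|_X = \infty$, which is precisely the conclusion proved in Proposition~\ref{prop:Unboundedness_finite_existence_time}. Hence the shortest route is simply to invoke that proposition, and the corollary follows in one line.

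To make the content transparent, I would also spell out the contrapositive reading announced in the surrounding text: every uniformly bounded maximal solution is global. Suppose $\phi(\cdot,x,u)$ is a maximal solution on $[0,t_m(x,u))$ that remains bounded, that is,
\[
\limsup_{t \to t_m(x,u)-0}\|\phi(t,x,u)\|_X =: C < \infty.
\]
If $t_m(x,u)$ were finite, Proposition~\ref{prop:Unboundedness_finite_existence_time} would force this $\limsup$ to equal $+\infty$, a contradiction; therefore $t_m(x,u) = \infty$ and the solution is global. This single logical step is immediate and requires no new estimate.

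If a self-contained derivation were wanted instead of citing the proposition as a black box, I would reproduce its mechanism directly: choose a sequence $t_k \to t_m(x,u)$ with $\|\phi(t_k,x,u)\|_X \le C$, apply Theorem~\ref{PicardCauchy} to obtain a uniform existence time $\tau(C)>0$ valid for initial data of norm at most $C$ under inputs of magnitude at most $\|u\|_\Uc$, and then use the cocycle property to prolong $\phi(\cdot,x,u)$ onto $[0, t_k + \tau(C))$. Since $t_k \to t_m(x,u)$, for large $k$ this interval strictly exceeds $[0,t_m(x,u))$, contradicting maximality.

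I expect no genuine obstacle in the corollary itself; the only delicate point lives upstream, in the uniformity furnished by Theorem~\ref{PicardCauchy}. There the existence time is uniform over a ball around a \emph{compact} set, and one must be careful because in infinite dimensions a closed bounded ball need not be compact. This is where any hidden work resides, but it has already been discharged within the proof of Proposition~\ref{prop:Unboundedness_finite_existence_time}, so for the present statement nothing further is needed.
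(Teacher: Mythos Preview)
Your proposal is correct and matches the paper's treatment: the corollary is stated without a separate proof precisely because it is an immediate restatement of Proposition~\ref{prop:Unboundedness_finite_existence_time}, and your one-line invocation of that proposition is exactly what is intended. The additional contrapositive reading and the sketch reproducing the proposition's mechanism are accurate and add nothing beyond what the paper already contains upstream.
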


\begin{definition}
\label{def:Finite-escape-time} 
\index{escape time}
If $t_m(x,u)$ is finite, it is called \emph{(finite) escape time} for a pair $(x,u)$.
\end{definition}

\begin{example}
\label{examp:finite-escape-time-example}
A simple example of a system possessing a finite escape time is 
\begin{eqnarray}
\dot{x}(t) = x^3(t),
\label{eq:finite-escape-time-example}
\end{eqnarray}
where $x(t)\in\R$. Straightforward computation shows that the flow of this system is given by 
$\phi(t,x_0)= \frac{x_0}{\sqrt{1-tx^2_0}}$, where $\phi(\cdot,0)$ is defined on $\R_+$, and for each $x_0\in \R\backslash\{0\}$ $\phi(\cdot,x_0)$ is defined on $[0,\frac{1}{x^2_0})$.
In other words, for each $x_0\neq 0$ the solution of \eqref{eq:finite-escape-time-example} escapes to infinity in time $\frac{1}{x^2_0}$.
\end{example}

}
\fi

\subsection{Forward completeness and boundedness of reachability sets}
\label{sec:Boundedness of reachability sets}

%\mir{assume linearly bounded below?}

Local Lipschitz continuity guarantees \amc{the local existence of solutions}. To ensure the global existence of solutions, stronger requirements on nonlinearity are needed.
\begin{proposition}
\label{prop:Unif-Glob-Lip-and-BRS}
Let Assumptions~\ref{ass:Admissibility}, \ref{ass:Integrability}, \ref{Assumption1} hold.
Let further $f$ be uniformly globally Lipschitz.
Then \eqref{eq:SEE+admissible} is forward complete and has BRS.
\end{proposition}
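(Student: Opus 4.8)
The plan is to combine the boundedness-implies-continuation property (Proposition~\ref{prop:Unboundedness_finite_existence_time}) with an a priori bound: I will show that on every finite time interval the maximal solution stays bounded, which by BIC forces $t_m(x,u)=+\infty$ and hence forward completeness; carefully tracking the constants in this bound will simultaneously yield BRS. The only new ingredient compared with the local theory is the affine growth estimate provided by uniform global Lipschitz continuity together with Assumption~\ref{Assumption1}(iii): for all $s$ one has $\|f(x(s),u(s))\|_V \le L\|x(s)\|_X + \sigma(\|u(s)\|_U)+c$, where $L$ is the global Lipschitz constant, now independent of the size of the state.

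First I would fix $(x,u)$, let $[0,t_m)$ be the maximal domain, pick $M\ge1$, $\lambda>0$ with $\|T(t)\|\le Me^{\lambda t}$, and estimate the mild-solution identity \eqref{eq:SEE+admissible_Integral_Form} termwise. Writing $N(t):=\sup_{s\in[0,t]}\|x(s)\|_X$, bounding $T(t)x_0$ by $Me^{\lambda t}\|x_0\|_X$, the input term by $h_t\|u\|_{L^\infty}$ via Proposition~\ref{prop:Restatement-admissibility}, and the nonlinear term by $c_t\|f(x(\cdot),u(\cdot))\|_{L^\infty([0,t],V)}$ (using the $\infty$-admissibility constants $c_t$ of $B_2$), the growth estimate gives, for $t$ in a subinterval,
\[
N(t)\le Me^{\lambda t}\|x_0\|_X + h_t\|u\|_{L^\infty} + c_t\big(L\,N(t)+\sigma(\|u\|_{L^\infty})+c\big).
\]
Here is the crucial step: since $B_2$ is \emph{zero-class} $\infty$-admissible, $c_t\to0$ as $t\to+0$, so I may choose $T^*>0$, depending only on $L$, with $c_{T^*}L\le\tfrac{1}{2}$. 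On $[0,T^*]$ the term $c_t L N(t)$ is then absorbed into the left-hand side, yielding a bound on $N(T^*)$ that is affine in $\|x_0\|_X$ and controlled by $\|u\|_{L^\infty}$.

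Next I would iterate. Restarting the solution at the times $kT^*$ and using the cocycle property together with the shift-invariance of $\Uc$ (which guarantees $\|u(\cdot+kT^*)\|_{L^\infty}\le\|u\|_{L^\infty}$), the same $T^*$ works at every step, so the single-interval bound becomes an affine recursion $a_{k+1}\le \alpha a_k+\beta$ for $a_k:=\|\phi(kT^*,x,u)\|_X$, where $\alpha=Me^{\lambda T^*}/(1-c_{T^*}L)$ depends only on $M,\lambda,L,T^*$, and $\beta$ depends additionally on $\|u\|_{L^\infty}$. Solving this recursion shows $\sup_{t\in[0,NT^*]}\|\phi(t,x,u)\|_X<\infty$ for every $N$, i.e. the solution is bounded on every finite interval; by Proposition~\ref{prop:Unboundedness_finite_existence_time} this excludes $t_m(x,u)<\infty$, proving forward completeness. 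For BRS, I would observe that for $\|x_0\|_X\le C$ and $\|u\|_{\Uc}\le C$ the constants $\alpha,\beta$ and the number of steps $\lceil\tau/T^*\rceil$ needed to cover $[0,\tau]$ are all uniform in $(x_0,u)$, so the resulting bound on $\sup_{t\in[0,\tau]}\|\phi(t,x,u)\|_X$ depends only on $C$ and $\tau$.

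The main obstacle is that the admissibility estimates control the integral terms only through the $L^\infty$-norm of the forcing rather than through a time integral of it, so the classical Gr\"onwall argument is unavailable and the coefficient $c_tL$ need not be less than $1$ on a long interval. The zero-class property of $B_2$ is exactly what resolves this: it makes the absorption coefficient small on a short interval of length $T^*$ that is independent of the initial data and input, after which the iteration converts the local bound into a global one.
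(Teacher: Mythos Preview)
Your proposal is correct and follows essentially the same approach as the paper: estimate the mild-solution formula termwise, use the uniform global Lipschitz bound together with Assumption~\ref{Assumption1}(iii) to get an affine growth estimate, exploit zero-class $\infty$-admissibility of $B_2$ to choose a uniform step $T^*$ with $c_{T^*}L\le\tfrac12$ so that the nonlinear term can be absorbed, then iterate via the cocycle property and invoke BIC. The paper's proof is slightly terser (it does not spell out the affine recursion or the shift-invariance of $\Uc$), but the argument is the same.
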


\begin{proof}
By Theorem~\ref{PicardCauchy}, \amc{for any $x_0 \in X$ and any $u \in \Uc$} there exists a mild solution of \eqref{eq:SEE+admissible}, with a maximal existence time $t_m (x_0, u)$, which may be finite or infinite. Let $t_m (x_0, u)$ be finite. 

Let $L>0$ be a uniform global Lipschitz constant for $f$. 
As $\|T(t)\|\leq Me^{\lambda t}$ for some $M\ge 1$, $\lambda\geq 0$ and all $t\geq 0$, for any $t<t_m (x_0, u)$ we have according to the formula \eqref{eq:SEE+admissible_Integral_Form} the following estimates
\begin{align*}
\|\phi(&t,x_0,u)\|_X \leq  \|T(t)\| \|x_0\|_X + \Big\|\int_{0}^{t} T_{-1}(t-s) Bu(s)ds\Big\|_X \\
&\qquad\qquad\qquad\qquad + \Big\|\int_0^t T_{-1}(t-s)B_2 f(\phi(s,x_0,u),u(s))ds\Big\|_X  \\
			&\leq Me^{\lambda t} \|x_0\|_X + h_t \|u\|_{\Uc} + c_t \|f(\phi(\cdot,x_0,u),u(\cdot))\|_{L^\infty([0,t],V)}\\
			&\leq Me^{\lambda t} \|x_0\|_X + h_t \|u\|_{\Uc} 
			+ c_t \|f(\phi(\cdot,x_0,u),u(\cdot)) - f(0, u(\cdot))\|_{L^\infty([0,t],V)}\\
			&\qquad\qquad\qquad\qquad+ c_t \|f(0,u(\cdot))\|_{L^\infty([0,t],V)}\\
			&\leq Me^{\lambda t} \|x_0\|_X + h_t \|u\|_{\Uc} 
			+ c_t L \|\phi(\cdot,x_0,u)\|_{L^\infty([0,t],X)} + c_t (\sigma(\|u\|_\Uc) + c).
\end{align*}
\amc{Since $c_t \to 0$ whenever $t \to +0$,} there is some $t_1 \in (0,t_m (x_0, u))$ such that $c_{t_1}L\leq\frac{1}{2}$. Then it holds that 
\begin{align}
\label{eq:BRS-estimate}
\sup_{t \in[0,t_1]}\|\phi(t,x_0,u)\|_X 
			\leq 2\Big(Me^{\lambda t_1} \|x_0\|_X + h_{t_1} \|u\|_{\Uc} + c_{t_1} (\sigma(\|u\|_\Uc) + c)\Big).
\end{align}
Note that $t_1$ does not depend on $x_0$ and $u$. Hence, using cocycle property and with $\phi(t_1,x_0,u)$ instead of $x_0$, we obtain a uniform bound for $\phi(\cdot,x_0,u)$ on $2t_1$, $3t_1$, and so on. Thus, $\phi(\cdot,x_0,u)$ 
is uniformly bounded on $[0,t_m (x_0, u))$, and hence can be prolonged to a larger interval by \amc{the} BIC property ensured by Proposition~\ref{prop:Unboundedness_finite_existence_time}, a contradiction to the definition of $t_m (x_0, u)$. \amc{Overall,} $\Sigma$ is forward complete, and the estimate 
\eqref{eq:BRS-estimate} iterated as above to larger intervals shows that  \eqref{eq:SEE+admissible} has BRS.
%
%Define
%$z(t):=e^{-\lambda t}\phi(t,x_0,u)$. Then we have
%\begin{equation*}
%\|z(t)\|_X  \leq M \Big( \|x_0\|_X + h_t \|u\|_{\Uc}   + t \big(\sigma(\|u\|_{\Uc})+c)\Big) + ML \int_0^{t} \|z(s)\|_X ds .
%\end{equation*}
%This implies due to Gronwall's inequality %(Lemma~\ref{lem:Gronwall}) 
%
%\begin{equation*}
%\|z(t)\|_X  \leq M\Big(\|x_0\|_X + h_t \|u\|_{\Uc}  + t (\sigma(\|u\|_{\Uc})+c)\Big)e^{MLt},
%\end{equation*}
%or, equivalently,
%\begin{equation}
%\label{eq:BRS-for-globally-Lipschitz-systems}
%\|\phi(t,x_0,u)\|_X  \leq M\Big(\|x_0\|_X + h_t \|u\|_{\Uc}  + t (\sigma(\|u\|_{\Uc})+c)\Big)e^{(ML+\lambda)t}.
%\end{equation}
%This shows that
%\[
%\lim_{t \to \tau-0}\|\phi(t,x_0,u)\|_X \leq M\Big(\|x_0\|_X + h_t \|u\|_{\Uc}  + \tau \big(\sigma(\|u\|_{\Uc})+c\big)\Big)e^{(ML+\lambda)\tau} < \infty,
%\]
%and thus the solution $\phi(\cdot,x_0,u)$ stays bounded near time $\tau$. Now according to Corollary~\ref{cor:BIC-property} the solution $\phi(\cdot,x_0,u)$ can be prolonged to the interval $[0,\tau + \eps)$ for some $\eps>0$, which contradicts to the assumption that $[0,\tau)$ is the maximal (finite) interval of existence for the solution $\phi(\cdot,x_0,u)$.
%Thus, the maximal existence time of solution $\phi(\cdot,x_0,u)$ is infinite. This proves forward-completeness of \eqref{eq:SEE+admissible}.
%The estimate \eqref{eq:BRS-for-globally-Lipschitz-systems} shows the BRS property.
\end{proof}

\subsection{Regularity of the flow map}

\ifAndo\mir{Older proofs for $B_2=I$ are in comments.}\fi

We start \amc{this section} with a basic result describing the exponential deviation between two trajectories.
\begin{theorem}
\label{thm:Deviation-of-trajectories}
Let Assumptions~\ref{ass:Admissibility}, \ref{ass:Integrability}, \ref{Assumption1} hold.
Take $M \ge 1$, $\lambda\geq 0$  such that $\|T(t)\|\leq Me^{\lambda t}$ for all $t\geq 0$.
Pick any $x_1,x_2 \in X$, any $u\in\Uc$, and let $\phi(\cdot,x_1,u)$ and $\phi(\cdot,x_2,u)$ be defined on a certain common interval $[0,\tau]$.

Then there exists $R=R(x_1,x_2,\tau,u)>0$, such that 
\begin{align}
\label{eq:Exponential-deviation}
\|\phi(t,x_1,u) - \phi(t,x_2,u)\|_X  \leq  2M\|x_1-x_2\|_X e^{Rt},\quad t\in[0,\tau].
\end{align}
\end{theorem}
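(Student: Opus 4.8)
The plan is to exploit that the two trajectories share the same input $u$, so that the purely linear input contribution $\int_0^t T_{-1}(t-s)Bu(s)\,ds$ cancels when we subtract the two copies of the integral equation \eqref{eq:SEE+admissible_Integral_Form}. Writing $d(t):=\|\phi(t,x_1,u)-\phi(t,x_2,u)\|_X$, I would first record the identity
\[
\phi(t,x_1,u)-\phi(t,x_2,u)=T(t)(x_1-x_2)+\int_0^t T_{-1}(t-s)B_2\big[f(\phi(s,x_1,u),u(s))-f(\phi(s,x_2,u),u(s))\big]\,ds.
\]
Since both solutions are continuous on the compact interval $[0,\tau]$, their images are bounded, so there is $C>0$ with $\|\phi(s,x_i,u)\|_X\le C$ for all $s\in[0,\tau]$ and $i\in\{1,2\}$; here $C$, and everything built on it, depends on $x_1,x_2,\tau,u$, which is precisely where the dependence of $R$ enters. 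Assumption~\ref{Assumption1}(i) then supplies a constant $L=L(C)$ with $\|f(\phi(s,x_1,u),u(s))-f(\phi(s,x_2,u),u(s))\|_V\le L\,d(s)$ for all $s\in[0,\tau]$.

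Next I would estimate the integral term via the admissibility of $B_2$. Applying Proposition~\ref{prop:Restatement-admissibility} to $B_2$, which is zero-class $\infty$-admissible by Assumption~\ref{ass:Integrability}, yields a nondecreasing function $t\mapsto c_t$ with $c_t\to 0$ as $t\to+0$ and
\[
d(t)\le Me^{\lambda t}\|x_1-x_2\|_X+c_t\,\big\|f(\phi(\cdot,x_1,u),u(\cdot))-f(\phi(\cdot,x_2,u),u(\cdot))\big\|_{L^\infty([0,t],V)}\le Me^{\lambda t}\|x_1-x_2\|_X+c_t L\sup_{s\in[0,t]}d(s).
\]
The obstacle here is that admissibility only delivers a \emph{whole-interval} $L^\infty$-bound on the convolution, not a pointwise convolution kernel, so a naive Gr\"onwall inequality of the form $d(t)\le a+\int_0^t c\,d(s)\,ds$ does not apply directly. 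The remedy is to use zero-class admissibility to shrink the contraction constant: I would choose $h>0$ so small that $c_h L\le \tfrac12$. For $t\in[0,h]$ the displayed inequality becomes $d(t)\le Me^{\lambda t}\|x_1-x_2\|_X+\tfrac12\sup_{s\in[0,t]}d(s)$, and taking the supremum over $[0,t]$ and absorbing the last term yields $d(t)\le 2Me^{\lambda t}\|x_1-x_2\|_X$ on $[0,h]$.

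Finally I would propagate this short-interval estimate by the cocycle property (Theorem~\ref{thm:SEE-as-control systems}). Restarting the two solutions at the grid points $kh$ with shifted input $u(\cdot+kh)$ and initial difference of norm $d(kh)$, the same argument gives $\sup_{t\in[0,h]}d(kh+t)\le 2Me^{\lambda h}d(kh)$, hence inductively $d(s)\le (2M)^{k+1}e^{\lambda s}\|x_1-x_2\|_X$ for $s\in[kh,(k+1)h]$. Since $k\le s/h$ on that interval, bounding $(2M)^{k+1}\le 2M\,(2M)^{s/h}=2M\,e^{(\ln(2M)/h)s}$ and collecting the exponentials produces
\[
d(s)\le 2M\|x_1-x_2\|_X e^{Rs},\qquad R:=\frac{\ln(2M)}{h}+\lambda>0,
\]
which is exactly \eqref{eq:Exponential-deviation} (note $R>0$ as $M\ge1$ forces $\ln(2M)>0$). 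The only genuinely delicate point is the passage from the $L^\infty$-type admissibility bound to an exponential-in-time estimate, resolved by the short-interval contraction together with the cocycle iteration rather than by a classical integral Gr\"onwall lemma.
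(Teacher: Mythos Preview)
Your proposal is correct and follows essentially the same approach as the paper: cancel the $Bu$ term, use the zero-class $\infty$-admissibility of $B_2$ together with the local Lipschitz bound to get a contraction on a short interval $[0,h]$ with $c_h L\le \tfrac12$, and then iterate via the cocycle property to obtain the exponential estimate with $R=\lambda+\tfrac{1}{h}\ln(2M)$. The only small omission is that the Lipschitz condition in Assumption~\ref{Assumption1}(i) also requires $u(s)\in B_{C,U}$ for a.e.\ $s$, so your constant $C$ should be chosen large enough to dominate $\|u\|_\Uc$ as well (the paper does this by including $\|u\|_\Uc$ in its bound $K$); this is a cosmetic fix and does not affect the argument.
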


\begin{proof}
Pick any $x_1,x_2 \in X$, any $u \in \Uc$, and let $\phi_i(t):=\phi(t,x_i,u)$, $i=1,2$ be the corresponding (unique) maximal solutions of \eqref{eq:SEE+admissible} (guaranteed by Theorem~\ref{thm:Global_well-posedness}), defined on $[0,\tau]$, for a certain $\tau>0$.

Set 
\[
K:=\max\big\{\sup_{0 \leq t \leq \tau}\|\phi_1(t)\|_X,\sup_{0 \leq t \leq  \tau}\|\phi_2(t)\|_X,\|u\|_\Uc\big\}<\infty,
\]
 where $K$ is finite due to \amc{the} continuity of trajectories.

Due to  \eqref{eq:SEE+admissible_Integral_Form}, and using Lipschitz continuity of $f$ (see \eqref{eq:Lipschitz}), we have for any $t \in [0,\tau]$:
\begin{align*}
\|\phi_1(t) - \phi_2(t)\|_X &\leq \|T(t)\| \|x_1-x_2\|_X  \\
&\qquad+ \Big\|\int_0^t T_{-1}(t-s) B_2 \Big(f(\phi_1(s),u(s)) - f(\phi_2(s),u(s))\Big)ds\Big\|_X \\
&\leq Me^{\lambda t} \|x_1-x_2\|_X  +  c_t \|f(\phi_1(\cdot),u) - f(\phi_2(\cdot),u)\|_{L^\infty([0,t],X)}\\
&\leq Me^{\lambda t} \|x_1-x_2\|_X  +  c_t L(K) \|\phi_1(\cdot)-\phi_2(\cdot)\|_{L^\infty([0,t],X)}.
\end{align*}
As $c_t \to 0$ as $t \to +0$, there is some $t_1 \in (0,\tau)$ such that $c_{t_1}L(K)\leq\frac{1}{2}$. 
Note that $t_1$ depends on $\tau$ only (as $K$ does).

 Then, taking the supremum of the previous expression over $[0,t]$, with $t<t_1$, we obtain that 
\begin{align*}
\|\phi_1(t) - \phi_2(t)\|_X 
&\leq 2Me^{\lambda t} \|x_1-x_2\|_X,\quad t\in[0,t_1].
\end{align*}
Take $k\in\N$ such that $kt_1 <\tau$ and $(k+1)t_1>\tau$. 
Then, using the cocycle property, for any $l\in\N$, $l\leq k$ and all $t\in [0,t_1]$ s.t. $lt_1+t<\tau$ we have
\begin{align*}
\|\phi_1(lt_1 + t) - \phi_2(lt_1+t)\|_X 
&\leq (2M)^{l+1} e^{\lambda(lt_1 + t)} \|x_1-x_2\|_X\\
&= 2M e^{l \ln(2M) + \lambda lt_1 + \lambda t} \|x_1-x_2\|_X\\
[R:=\lambda + \frac{1}{t_1}\ln(2M)>\lambda] &= 2M e^{l Rt_1 + \lambda t} \|x_1-x_2\|_X\\
&\le 2M e^{R (lt_1 + t)} \|x_1-x_2\|_X.
\end{align*}
This shows \eqref{eq:Exponential-deviation}.
\end{proof}

%\begin{proof}
%Pick any $x_1,x_2 \in X$, any $u \in \Uc$, and let $\phi_i(t):=\phi(t,x_i,u)$, $i=1,2$ be the corresponding (unique) maximal solutions of \eqref{eq:SEE+admissible} (guaranteed by Theorem~\ref{thm:Global_well-posedness}), defined on $[0,\tau]$, for a certain $\tau>0$.
%
%Set 
%\[
%K:=\max\{\sup_{0 \leq t \leq \tau}\|\phi_1(t)\|_X,\sup_{0 \leq t \leq  \tau}\|\phi_2(t)\|_X,\|u\|_\infty\}<\infty,
%\]
 %where $K$ is finite due to continuity of trajectories.
%
%Due to  \eqref{eq:SEE+admissible_Integral_Form}, and using Lipschitz continuity of $f$ (see \eqref{eq:Lipschitz}) we have for any $t \in [0,\tau]$:
%\begin{align*}
%\|\phi_1(t) - \phi_2(t)\|_X &\leq \|T(t)\| \|x_1-x_2\|_X  \\
%&\qquad+ \int_0^t{\|T(t-r)\| \|f(\phi_1(r),u(r))-f(\phi_2(r),u(r))\|_Xdr} \\
%&\leq Me^{\lambda t} \|x_1-x_2\|_X  + \int_0^t Me^{\lambda (t-r)}L(K) \|\phi_1(r)-\phi_2(r)\|_X dr.
%\end{align*}
%Now define $z_i(t):=e^{-\lambda t}\phi_i(t)$, $i=1,2$, $t \geq 0$.
%Then
%\begin{align*}
%\|z_1(t) - z_2(t)\|_X &\leq M  \|x_1-x_2\|_X   + \int_0^t{M L(K) \|z_1(r)-z_2(r)\|_X dr} \\
%&= M \|x_1-x_2\|_X + M L(K) \int_0^t{ \|z_1(r)-z_2(r)\|_X dr}.
%\end{align*}
%According to Gronwall's inequality, % (Lemma~\ref{lem:Gronwall}),  
%we obtain
%\begin{align*}
%\|z_1(t) - z_2(t)\|_X  \leq M \|x_1-x_2\|_X e^{ M L(K) t},
%\end{align*}
%which implies \eqref{eq:Exponential-deviation} with $R:=M L(K) + \lambda$.
%%or in the original variables
%%\begin{align*}
%%\|\phi(t,x_1,u) - \phi(t,x_2,u)\|_X  \leq  \|x_1-x_2\|_X e^{ (M L(K) + \lambda) \tau},\quad t\in[0,\tau],
%%\end{align*}
%%which proves the theorem.
%\end{proof}

\begin{definition}
\label{axiom:Lipschitz}
\emph{The flow of a forward complete control system $\Sigma=(X,\Uc,\phi)$, is called Lipschitz continuous on compact intervals (for uniformly bounded inputs)}, if 
for any $\tau>0$ and any $C>0$ there exists $L>0$ so that for any $x_1,x_2 \in \clo{B_C}$,
for all $u \in B_{C,\Uc}$, it holds that 
\begin{eqnarray}
\|\phi(t,x_1,u) - \phi(t,x_2,u) \|_X \leq L \|x_1-x_2\|_X,\quad t \in [0,\tau].
\label{eq:Flow_is_Lipschitz}
\end{eqnarray} 
\end{definition}

Theorem~\ref{thm:Deviation-of-trajectories} estimates the deviation between two trajectories. To have a stronger result, showing the Lipschitz continuity of the flow map $\phi$, we additionally assume the BRS property of \eqref{eq:SEE+admissible}.
\begin{theorem}
\label{thm:Lipschitz-continuity-of-flow}
\amc{Suppose that Assumptions~\ref{ass:Admissibility}, \ref{ass:Integrability}, \ref{Assumption1} hold and \eqref{eq:SEE+admissible} has BRS.
 Then the flow of \eqref{eq:SEE+admissible} is Lipschitz continuous on compact intervals for uniformly bounded inputs.}
\end{theorem}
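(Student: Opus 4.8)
The plan is to bootstrap Theorem~\ref{thm:Deviation-of-trajectories}, whose estimate \eqref{eq:Exponential-deviation} already has exactly the form \eqref{eq:Flow_is_Lipschitz} we want. The only deficiency of that theorem for the present purpose is that its rate $R=R(x_1,x_2,\tau,u)$ enters through the constant $K=\max\{\sup_{[0,\tau]}\|\phi_1\|_X,\sup_{[0,\tau]}\|\phi_2\|_X,\|u\|_\Uc\}$, which a priori depends on the particular trajectories and input. The BRS hypothesis is precisely what is needed to make $K$, and hence $R$, uniform over all admissible data, thereby upgrading a pointwise deviation estimate to a genuine Lipschitz property.

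Concretely, I would fix $\tau>0$ and $C>0$ and argue in two steps. First, I would secure that all relevant solutions are defined on the whole of $[0,\tau]$: by the BRS property there is a finite bound $C'=C'(C,\tau)$ with $\|\phi(t,x,u)\|_X\leq C'$ for all $x\in\clo{B_C}$, all $u\in B_{C,\Uc}$, and all $t\in[0,\tau]$ for which the solution exists; combining this with the BIC property guaranteed by Proposition~\ref{prop:Unboundedness_finite_existence_time} excludes a finite escape time before $\tau$, since $\Liminf_{t\to t_m-0}\|\phi(t,x,u)\|_X=\infty$ would contradict the uniform bound $C'$. Hence $t_m(x,u)>\tau$ for every such $(x,u)$, and $\phi(\cdot,x_1,u),\phi(\cdot,x_2,u)$ share the common interval $[0,\tau]$ required by Theorem~\ref{thm:Deviation-of-trajectories}. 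Second, I would rerun the estimate in the proof of that theorem with $K$ replaced by $\tilde K:=\max\{C',C\}$, which now depends only on $(\tau,C)$: then the Lipschitz constant $L(\tilde K)$ of $f$ on $B_{\tilde K}$, the step length $t_1$ chosen so that $c_{t_1}L(\tilde K)\leq\tfrac12$, and the resulting rate $R=\lambda+\tfrac{1}{t_1}\ln(2M)$ all depend only on $(\tau,C)$ and not on the individual $x_1,x_2,u$.

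With $R$ uniform, the iterated cocycle estimate of Theorem~\ref{thm:Deviation-of-trajectories} gives
\[
\|\phi(t,x_1,u)-\phi(t,x_2,u)\|_X\leq 2M e^{R\tau}\|x_1-x_2\|_X,\qquad t\in[0,\tau],
\]
for all $x_1,x_2\in\clo{B_C}$ and all $u\in B_{C,\Uc}$, so that $L:=2M e^{R\tau}$ serves as the desired uniform Lipschitz constant, establishing \eqref{eq:Flow_is_Lipschitz}. I expect the genuinely new point beyond Theorem~\ref{thm:Deviation-of-trajectories}, and thus the main obstacle, to be this uniform existence-on-$[0,\tau]$ argument: one must combine BRS with the BIC property carefully to conclude that $t_m(x,u)>\tau$ holds \emph{uniformly} over the bounded ball of initial states and inputs, because without a common domain of definition the exponential deviation bound cannot be invoked at all. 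Once that is in place, the remainder is a routine re-examination of the constants in the earlier proof to confirm their dependence only on $(\tau,C)$.
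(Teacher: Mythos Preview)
Your proposal is correct and follows essentially the same approach as the paper: define the uniform bound $K(\tau)=\sup_{t\in[0,\tau],\,x\in B_C,\,u\in B_{C,\Uc}}\|\phi(t,x,u)\|_X$ from BRS and then rerun the estimate of Theorem~\ref{thm:Deviation-of-trajectories} with this $K$ in place of the trajectory-dependent one. The paper's proof is terse and simply asserts that the solutions are global, whereas you explicitly justify existence on $[0,\tau]$ via BRS combined with the BIC property (Proposition~\ref{prop:Unboundedness_finite_existence_time}); this is a welcome clarification rather than a different route.
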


\begin{proof}
Take any $C>0$ and pick any $x_1,x_2 \in B_C$, and any $u \in \Uc$ with
$\|u\|_{\Uc} \leq C$.
Let $\phi_i(\cdot):=\phi(\cdot,x_i,u)$, $i=1,2$ be the corresponding maximal solutions of \eqref{eq:SEE+admissible}. These solutions are global since we assume that \eqref{eq:SEE+admissible} is forward-complete.

As \eqref{eq:SEE+admissible} is BRS, the following quantity is finite for any $\tau>0$:
\[
K(\tau):=\sup_{t\in[0,\tau],\ x\in B_C,\ u\in B_{C,\Uc}}\|\phi(t,x,u)\|_X<\infty.
\]
Following the lines of the proof of Theorem~\ref{thm:Deviation-of-trajectories}, we obtain the claim.
%
%
%Take $M,\lambda\geq 0$  such that $\|T(t)\|\leq Me^{\lambda t}$ for all $t\geq 0$.
%
%Due to  \eqref{eq:SEE+admissible_Integral_Form} we have for any $t \in [0,\tau]$:
%\begin{align*}
%\|\phi_1(t) - \phi_2(t)\|_X &\leq \|T(t)\| \|x_1-x_2\|_X  \\
%%&\qquad+ \int_0^t{\|T(t-r)\| \|f(\phi_1(r),u(r))-f(\phi_2(r),u(r))\|_Xdr} \\
%&\qquad+ \Big\|\int_0^t T(t-s)B_2 \Big(f(\phi_1(s),u(s)) - f(\phi_2(s),u(s))\Big)ds\Big\|_X \\
%\end{align*}
%Define $z_i(t):=e^{-\lambda t}\phi_i(t)$, $i=1,2$, $t \geq 0$.
%Then
%\begin{align*}
%\|z_1(t) - z_2(t)\|_X &\leq M  \|x_1-x_2\|_X   + \int_0^t{M L(K) \|z_1(r)-z_2(r)\|_X dr} \\
%&= M \|x_1-x_2\|_X + M L(K) \int_0^t{ \|z_1(r)-z_2(r)\|_X dr}.
%\end{align*}
%According to Gronwall's inequality, % (Lemma~\ref{lem:Gronwall}), 
%we obtain
%\begin{align*}
%\|z_1(t) - z_2(t)\|_X  \leq M \|x_1-x_2\|_X e^{ M L(K) t},
%\end{align*}
%or in the original variables
%\begin{align*}
%\|\phi(t,x_1,u) - \phi(t,x_2,u)\|_X  \leq  \|x_1-x_2\|_X e^{ (M L(K) + \lambda) t},\quad t \in [0,\tau],
%\end{align*}
%which proves the theorem.
\end{proof}

\begin{definition}
\label{def:Continuous-dependence-systems} 
Let $\Sigma=(X,\Uc,\phi)$ be a forward complete control system. 
We say that the flow
\emph{$\phi$ depends continuously on inputs and on initial states}, if 
 for all $x \in X$, $u \in \Uc$, $\tau>0$, and all $\eps>0$ there exists $\delta>0$, such that $\forall x' \in X: \|x-x'\|_X< \delta$ and
$\forall u' \in \Uc: \|u-u'\|_{\Uc}< \delta$ it holds that
\[
\|\phi(t,x,u)-\phi(t,x',u')\|_X< \eps, \quad t \in [0,\tau].
\]
\end{definition}

To obtain \amc{the} continuity of the flow map with respect to both states and inputs, which is important for the application of the density argument, we impose additional conditions on the nonlinearity $f$.
\begin{theorem}
\label{Continuous_dependence_Thm}
Let Assumptions~\ref{ass:Admissibility}, \ref{ass:Integrability}, \ref{Assumption1} hold.
Let further there exists $q \in \Kinf$ such that for all $C>0$ there is $L(C)>0$: 
for all $x_1,x_2 \in \clo{B_C}$ and all $v_1,v_2 \in \clo{B_{C,U}}$ it holds that
\begin{equation}
\|f(x_1,v_1)-f(x_2,v_2)\|_V \leq L(C) \big( \|x_1-x_2\|_X + q(\|v_1-v_2\|_U) \big).
\label{eq:Lipschitz_On_X_U}
\end{equation}
If \eqref{eq:SEE+admissible} has the BRS property, then the flow of \eqref{eq:SEE+admissible} depends continuously on initial states and inputs.
\end{theorem}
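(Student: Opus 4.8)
The plan is to mimic the proofs of Theorem~\ref{thm:Deviation-of-trajectories} and Theorem~\ref{thm:Lipschitz-continuity-of-flow}, the only genuinely new ingredient being the treatment of the input discrepancy through the enhanced Lipschitz estimate \eqref{eq:Lipschitz_On_X_U}. Fix $x \in X$, $u \in \Uc$, $\tau>0$ and $\eps>0$, and let $x' \in X$, $u' \in \Uc$ be arbitrary with $\|x-x'\|_X<\delta$ and $\|u-u'\|_\Uc<\delta$ for a $\delta \in (0,1]$ to be chosen. First I would fix a common radius $C>\max\{\|x\|_X,\|u\|_\Uc\}+1$ so that both $x,x'$ lie in $\clo{B_C}$ and $u,u'$ in $\clo{B_{C,\Uc}}$; since the system is forward complete with BRS, the quantity $K(\tau):=\sup\{\|\phi(t,y,v)\|_X: t\in[0,\tau],\ \|y\|_X\le C,\ \|v\|_\Uc\le C\}$ is finite, so all four trajectory/input norms stay below $\tilde C:=\max\{K(\tau),C\}$ on $[0,\tau]$. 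Let $L:=L(\tilde C)$ be the corresponding constant in \eqref{eq:Lipschitz_On_X_U}.

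Writing $\phi_1(\cdot):=\phi(\cdot,x,u)$, $\phi_2(\cdot):=\phi(\cdot,x',u')$ and subtracting the two integral identities \eqref{eq:SEE+admissible_Integral_Form}, I would split the difference into three parts: the semigroup term $\|T(t)(x-x')\|_X\le Me^{\lambda\tau}\|x-x'\|_X$; the input term $\big\|\int_0^t T_{-1}(t-s)B(u(s)-u'(s))\,ds\big\|_X\le h_\tau\|u-u'\|_\Uc$, controlled via $\infty$-admissibility of $B$ and \eqref{eq:admissible-operator-norm-estimate}; and the nonlinearity term, controlled via zero-class admissibility of $B_2$ together with \eqref{eq:Lipschitz_On_X_U}, by
\[
c_t \big\| f(\phi_1(\cdot),u(\cdot))-f(\phi_2(\cdot),u'(\cdot))\big\|_{L^\infty([0,t],V)}\le c_t L\Big(\sup_{s\in[0,t]}\|\phi_1(s)-\phi_2(s)\|_X + q(\|u-u'\|_\Uc)\Big),
\]
where I use that $\|u(s)-u'(s)\|_U\le\|u-u'\|_\Uc$ a.e.\ and that $q$ is increasing. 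As in Theorem~\ref{thm:Deviation-of-trajectories}, since $c_t\to 0$ as $t\to+0$ there is $t_1\in(0,\tau]$, depending only on $\tau$ through $\tilde C$ and $L$, with $c_{t_1}L\le\tfrac12$. Absorbing the state-difference supremum into the left-hand side then yields, for all $t\in[0,t_1]$,
\[
\sup_{s\in[0,t_1]}\|\phi_1(s)-\phi_2(s)\|_X \le 2Me^{\lambda\tau}\|x-x'\|_X + 2h_\tau\|u-u'\|_\Uc + q(\|u-u'\|_\Uc).
\]

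Finally I would propagate this estimate across $[0,\tau]$ by the cocycle property, exactly as in Theorem~\ref{thm:Deviation-of-trajectories}: on the $k$-th subinterval $[(k-1)t_1,kt_1]$ the two trajectories are flows started from $\phi((k-1)t_1,x,u)$ and $\phi((k-1)t_1,x',u')$ under the shifted inputs $u((k-1)t_1+\cdot)$ and $u'((k-1)t_1+\cdot)$. BRS keeps the new initial states within $\clo{B_{\tilde C}}$, and shift invariance of $\Uc$ keeps the shifted inputs within $\clo{B_{\tilde C,\Uc}}$, so the same $t_1$ and $L$ apply and the one-step estimate holds with the initial-state deviation replaced by the endpoint deviation of the previous step. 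Iterating the recursion $d_k\le 2Me^{\lambda\tau}d_{k-1} + (2h_\tau\|u-u'\|_\Uc+q(\|u-u'\|_\Uc))$ over the finitely many steps $N=\lceil\tau/t_1\rceil$ gives a bound of the form $\sup_{t\in[0,\tau]}\|\phi_1(t)-\phi_2(t)\|_X\le \alpha\|x-x'\|_X + \beta\big(2h_\tau\|u-u'\|_\Uc+q(\|u-u'\|_\Uc)\big)$ with $\alpha,\beta$ depending only on $M,\lambda,\tau,t_1$. Since $q\in\Kinf$ is continuous with $q(0)=0$, the right-hand side tends to $0$ as $\delta\to0$, so one chooses $\delta>0$ making it smaller than $\eps$.

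I expect the main obstacle to be the bookkeeping in the iteration rather than any conceptual difficulty: one must verify that BRS (for the states) and shift invariance of $\Uc$ (for the inputs) genuinely keep all relevant norms under the single uniform bound $\tilde C$, so that the constants $t_1$ and $L$ do not degrade from step to step. The conceptual point worth flagging is that the input enters only through $q(\|u-u'\|_\Uc)$, which is continuous but need not be linear; this is precisely why the conclusion is continuous dependence and not Lipschitz continuity in the input, in contrast to the dependence on the initial state.
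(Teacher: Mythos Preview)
Your proposal is correct and follows essentially the same route as the paper: split the difference of the two mild solutions into the semigroup term, the $B$-input term, and the $B_2$-nonlinearity term, use the enhanced Lipschitz estimate \eqref{eq:Lipschitz_On_X_U} together with zero-class admissibility of $B_2$ to absorb the state-difference sup on a short interval $[0,t_1]$, and then propagate to $[0,\tau]$ via the cocycle property with the same $t_1$ and $L$ thanks to BRS. Your iteration is in fact spelled out more carefully than in the paper, which simply refers back to the strategy in Lemma~\ref{lem:RobustEquilibriumPoint}; the key observation that the relevant bound $\tilde C$ (and hence $L$ and $t_1$) does not degrade from step to step is exactly the parenthetical remark the paper makes about $K$ not changing.
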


\begin{proof}
Pick any time $\tau>0$. 
\amc{Take any $C>0$, any $x_1,x_2 \in \clo{B_C}$, and any $u_1,u_2 \in \clo{B_{C,\Uc}}$.} Let $\phi_i(\cdot)=\phi(\cdot,x_i,u_i)$, $i=1,2$ be the corresponding global solutions.

Due to  \eqref{eq:SEE+admissible_Integral_Form}, we have:
\begin{align*}
\|\phi_1(t) - \phi_2(t)\|_X &\leq \|T(t)\| \|x_1-x_2\|_X  +h_t \|u_1-u_2\|_{\Uc}\\
&\qquad + c_t \sup_{r\in[0,t]}\big\|f(\phi_1(r),u_1(r))-f(\phi_2(r),u_2(r))\big\|_V.
\end{align*}

In view of the boundedness of reachability sets for the system \eqref{eq:SEE+admissible}, we have
\[
K:=\sup_{\|z\|_X\leq C,\ \|u\|_\Uc\leq C,\ t \in [0,\tau]}\|\phi(t,z,u)\|_X < \infty.
\]
As $\|T(t)\|\leq Me^{\lambda t}$ for some $M,\lambda\geq 0$ and all $t\geq 0$, and due to the property 
\eqref{eq:Lipschitz_On_X_U} with $L:=L(K)$ (note that $K\geq C$), we can continue above estimates to obtain
\begin{align*}
\|\phi_1(t) &- \phi_2(t)\|_X \leq Me^{\lambda t} \|x_1-x_2\|_X  + h_t \|u_1-u_2\|_{\Uc} \\
&\qquad + c_t L(K) \sup_{r\in[0,t]} \Big( \|\phi_1(r)-\phi_2(r)\|_X + q\big(\|u_1(r)-u_2(r)\|_U\big)\Big).
\end{align*}
\amc{Since} $B$ is zero-class admissible, there is $t_1>0$ such that $c_{t_1} L(K) = \frac{1}{2}$, and thus taking the supremum of both sides over $t\in[0,t_1]$, we have for all $t\in[0,t_1]$ that
\begin{align*}
\|\phi_1(t) &- \phi_2(t)\|_X \leq 2Me^{\lambda t_1} \|x_1-x_2\|_X  + 2h_{t_1} \|u_1-u_2\|_{\Uc} + q\big(\|u_1-u_2\|_\Uc\big).
\end{align*}
Thus, for each $\varepsilon>0$ there is $\delta>0$ so that for all $x_2 \in B_\delta(x_1)$ and for all $u_2 \in B_{\delta,\Uc}(u_1)$ it holds that 
\begin{align*}
\|\phi_1(t) &- \phi_2(t)\|_X \leq \varepsilon,\quad t\in[0,t_1].
\end{align*}
This establishes the continuity over the interval $[0,t_1]$. To obtain continuity over the interval $[0,\tau]$, one can follow the strategy in the second part of the proof of Lemma~\ref{lem:RobustEquilibriumPoint} (and noting that at all the steps the parameter $K$ does not change).
\end{proof}

\subsection{Continuity at trivial equilibrium}

Without loss of generality, we restrict our analysis to fixed points of the form $(0,0) \in X \times
\Uc$. Note that $(0,0)$ is in $X \times \Uc$ since both $X$ and $\Uc$ are linear spaces.

To describe the behavior of solutions near the equilibrium, the following notion is of importance:
\begin{definition}
\label{def:RobustEquilibrium_Undisturbed}
\index{property!CEP}
Consider a system $\Sigma=(X,\Uc,\phi)$ with equilibrium point $0\in X$.
We say that
\emph{$\phi$ is continuous at the equilibrium} if for every $\eps >0$ and for any $h>0$ there exists a $\delta =
          \delta (\eps,h)>0$, so that $[0,h] \tm B_\delta \tm B_{\delta,\Uc} \subset D_\phi$, and
\begin{eqnarray}
 t\in[0,h],\ \|x\|_X \leq \delta,\ \|u\|_{\Uc} \leq \delta \qrq  \|\phi(t,x,u)\|_X \leq \eps.
\label{eq:RobEqPoint}
\end{eqnarray}
In this case, we will also say that $\Sigma$ has the \emph{CEP property}.
\end{definition}

CEP property is a \q{local in time version} of Lyapunov stability and is important, in particular, for the ISS superposition theorems \cite{MiW18b} and for the applications of the non-coercive ISS Lyapunov theory \cite{JMP20}.
\begin{lemma}[Continuity at equilibrium for \eqref{eq:SEE+admissible}]
\label{lem:RobustEquilibriumPoint}
Let Assumptions~\ref{ass:Admissibility}, \ref{ass:Integrability}, \ref{Assumption1} hold, and let $f(0,0)=0$.
Then $\phi$ is continuous at the equilibrium.
\end{lemma}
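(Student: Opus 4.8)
The plan is to combine the local integral estimate used in Theorem~\ref{PicardCauchy} and Proposition~\ref{prop:Unif-Glob-Lip-and-BRS} with a continuation (connectedness) argument and the BIC property. First I would fix $\eps>0$ and $h>0$, use $B_\eps$ as an \emph{a priori} ball, and set $L:=L(\eps)$, the Lipschitz constant from Assumption~\ref{Assumption1}(i) valid for states in $B_\eps$ and inputs in $B_{\eps,U}$; all inputs considered will satisfy $\|u\|_\Uc\le\delta\le\eps$, so $\|u(s)\|_U\le\eps$ a.e.\ and $L(\eps)$ applies. I would then pick a step length $t_*>0$ so small that $c_{t_*}L\le\tfrac12$ (possible since $B_2$ is zero-class admissible, whence $c_t\to 0$ as $t\to+0$), and put $N:=\lceil h/t_*\rceil$; both $t_*$ and $N$ depend only on $\eps$ and $h$.

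The core is a single-step estimate. Working on \eqref{eq:SEE+admissible_Integral_Form}, splitting $f(\phi(s),u(s))=\big(f(\phi(s),u(s))-f(0,u(s))\big)+f(0,u(s))$ and using \eqref{eq:Lipschitz} together with $\|T(t)\|\le Me^{\lambda t}$, I would obtain, as long as the solution remains in $B_\eps$ on $[0,t_*]$,
\[
\sup_{s\in[0,t_*]}\|\phi(s)\|_X \le 2\Big(Me^{\lambda t_*}\|\phi(0)\|_X + c_{t_*}\eta(\delta) + h_{t_*}\delta\Big),
\]
where $\eta(\delta):=\sup_{\|v\|_U\le\delta}\|f(0,v)\|_V$. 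The hypothesis $f(0,0)=0$ enters precisely here: by continuity of $f(0,\cdot)$ (Assumption~\ref{Assumption1}(ii)) one has $\eta(\delta)\to 0$ as $\delta\to+0$. Iterating over the subintervals $[kt_*,(k+1)t_*]$ via the cocycle property (each time with initial state $\phi(kt_*,x,u)$ and shifted input $u(\cdot+kt_*)$, whose norm is still $\le\delta$ by shift invariance of $\Uc$) gives a recursion $\beta_k\le a\beta_{k-1}+b$ with $a:=2Me^{\lambda t_*}$, $b:=2(c_{t_*}\eta(\delta)+h_{t_*}\delta)$, $\beta_0:=\delta$, hence $\beta_N\le a^N\delta + b\,\tfrac{a^N-1}{a-1}$. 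Since $a,N$ depend only on $\eps,h$ while $b\to 0$ as $\delta\to+0$, I would fix $\delta$ small enough (using $\eta(\delta)\to0$) that $\beta_N\le\eps/2$.

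It remains to upgrade the conditional bound (``while the solution stays in $B_\eps$'') to an unconditional one and to secure existence on all of $[0,h]$. For this I would run a connectedness argument on the maximal solution $\phi(\cdot,x,u)$ provided by well-posedness (Theorem~\ref{thm:Global_well-posedness}): set $\theta:=\sup\{t\le\min(t_m,h): \|\phi(s)\|_X\le\eps\ \forall s\in[0,t]\}$, which is positive since $\|x\|_X\le\delta<\eps$ and $\phi$ is continuous. On $[0,\theta)$ the trajectory lies in $B_\eps$, so the iterated estimate applies verbatim and yields $\|\phi(t)\|_X\le\beta_N\le\eps/2$; by continuity this bound persists at $\theta$, which excludes $\|\phi(\theta)\|_X=\eps$ and thus forces $\theta=\min(t_m,h)$. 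Finally, were $t_m\le h$ finite, the uniform bound $\|\phi(t)\|_X\le\eps/2$ on $[0,t_m)$ would contradict the BIC property (Proposition~\ref{prop:Unboundedness_finite_existence_time}); hence $t_m>h$, the solution exists on $[0,h]$, and $\|\phi(t,x,u)\|_X\le\eps/2\le\eps$ there, which is exactly the CEP property.

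I expect the main obstacle to be this last step: since we assume neither forward completeness nor BRS, the constant $L(\eps)$ is legitimate only while the trajectory stays in $B_\eps$, so the estimate and the existence claim must be bootstrapped together rather than in sequence. The connectedness argument combined with BIC is what closes this gap, and keeping a single fixed ball $B_\eps$ throughout (so that $L$ and $t_*$ never change across the $N$ subintervals) is what keeps the iteration valid.
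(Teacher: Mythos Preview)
Your argument is correct, but it takes a different route from the paper. The paper avoids the bootstrap entirely by introducing an auxiliary system with saturated nonlinearity $\tilde f(x,u):=f(\sat(x),\sat_2(u))$, which is uniformly globally Lipschitz; this auxiliary system is then forward complete with BRS by Proposition~\ref{prop:Unif-Glob-Lip-and-BRS}, so Theorem~\ref{thm:Lipschitz-continuity-of-flow} applies and gives a Lipschitz-in-$x$ bound $\|\tilde\phi(t,x,u)-\tilde\phi(t,0,u)\|_X\le L(\tau,\delta)\|x\|_X$. Combining this with a direct integral estimate for $\|\tilde\phi(t,0,u)\|_X$ (the same $f(0,u)$--splitting you use) yields the CEP inequality for $\tilde\phi$ on a short interval, and the original flow $\phi$ inherits it because $\phi=\tilde\phi$ while $\|\phi\|_X\le 1$ and $\|u\|_\Uc\le 1$. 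Both proofs then iterate via the cocycle property to reach the full horizon $[0,h]$.

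What your approach buys is self-containment: you do not need the auxiliary construction, nor Proposition~\ref{prop:Unif-Glob-Lip-and-BRS} or Theorem~\ref{thm:Lipschitz-continuity-of-flow}; the whole proof rests on the single-step integral bound, the recursion $\beta_k\le a\beta_{k-1}+b$, and BIC. What the paper's approach buys is modularity: the saturation trick reduces the problem to already-established results for globally Lipschitz systems, and it sidesteps the connectedness argument because the auxiliary flow exists globally and is bounded a priori, so one never has to argue ``as long as the trajectory stays in $B_\eps$''. Two small points to tidy in your write-up: the Lipschitz constant $L(\eps)$ in \eqref{eq:Lipschitz} is stated for the open ball $B_\eps$, so either work with $L(\eps')$ for some $\eps'>\eps$ or note that the bound extends to the closure by continuity; and in the definition of $\theta$ restrict to $t<t_m$ to avoid evaluating $\phi$ at the (possibly finite) maximal time.
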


\begin{proof}
Consider the following auxiliary system
\begin{subequations}
\label{xdot=Ax+f_xu_saturated}
\begin{eqnarray}
\dot{x}(t) & = & Ax(t) + B_2\tilde{f}(x(t),u(t)) + Bu(t),\quad t>0, \\
x(0)  &=&  x_0,
\end{eqnarray}
\end{subequations}
where 
\[
\tilde{f}(x,u):=f\big(\sat(x),\sat_2(u)\big),\quad x\in X,\ u\in U, 
\]
and the saturation function is given for the vectors $z$ in $X$ and in $U$ respectively by
\[
\sat(z):=
 \begin{cases}
           z, &   \|z\|_X\leq 1, \\
           \frac{z}{\|z\|_X}, &   \text{otherwise},
 \end{cases}
\qquad
\sat_2(z):=
 \begin{cases}
           z, &   \|z\|_U\leq 1, \\
           \frac{z}{\|z\|_U}, &   \text{otherwise}. 
 \end{cases} 
\]
As $f$ satisfies Assumption~\ref{Assumption1}, one can show that $\tilde{f}$ is uniformly globally Lipschitz continuous. Hence, \eqref{xdot=Ax+f_xu_saturated} is forward complete and has BRS property by Proposition~\ref{prop:Unif-Glob-Lip-and-BRS}.

We denote the flow of \eqref{xdot=Ax+f_xu_saturated} by $\tilde{\phi} = \tilde{\phi}(t,x,u)$. As $f(x,u)=\tilde{f}(x,u)$ whenever $\|x\|_X\leq 1$ and $\|u\|_U\leq 1$,  it holds also 
\[
\phi(t,x,u) = \tilde{\phi}(t,x,u),
\]
provided that $\|u\|_{\Uc}\leq 1$, $\phi(\cdot,x,u)$ exists on $[0,t]$, and $\|\phi(s,x,u)\|_X\leq 1$ for all $s\in[0,t]$.

\amc{Pick any $\eps \in (0,1)$, $\tau \geq 0$, $\delta \in (0,\varepsilon)$, $x \in B_\delta$, and any $u\in B_{\delta,\Uc}$.} It holds that
\begin{eqnarray*}
\|\tilde{\phi}(t,x,u) \|_X %&=& \|\phi(t,x,u) -\phi(t,0,u) + \phi(t,0,u)\|_X \\
            &\leq& \|\tilde{\phi}(t,x,u) -\tilde{\phi}(t,0,u)\|_X + \|\tilde{\phi}(t,0,u)\|_X.
\end{eqnarray*}
Since \eqref{xdot=Ax+f_xu_saturated} has BRS property, by Theorem~\ref{thm:Lipschitz-continuity-of-flow}, the flow of \eqref{xdot=Ax+f_xu_saturated} is
Lipschitz continuous on compact time intervals. Hence there exists a $L(\tau,\delta)>0$ so that for all $t\in[0,\tau]$
\begin{eqnarray}
\label{eq:CEP-estimate-1}
\|\tilde{\phi}(t,x,u) -\tilde{\phi}(t,0,u)\|_X \leq L(\tau,\delta)\|x\|_X \leq L(\tau,\delta)\delta.
\end{eqnarray}

Let us estimate $\|\tilde{\phi}(t,0,u)\|_X$. We have:
\begin{align*}
\|\tilde{\phi}(t,0,u)\|_X 
&\le \Big\|\int_0^t T_{-1}(t-s) B_2\tilde{f}\big(\tilde{\phi}(s,0,u),u(s)\big)ds\Big\|_X + \Big\|\int_0^t T_{-1}(t-s)Bu(s)ds\Big\|_X\\
&\leq c_t\esssup_{s\in[0,t]}\big\|\tilde{f}\big(\tilde{\phi}(s,0,u),u(s)\big)\big\|_X + h_t\big\|u\big\|_{L^\infty([0,t],U)}\\
&\leq c_t\esssup_{s\in[0,t]}\big\|\tilde{f}\big(\tilde{\phi}(s,0,u), u(s)\big) - \tilde{f}\big(0, u(s)\big)\big\|_X + c_t\esssup_{s\in[0,t]}\big\| \tilde{f}\big(0, u(s)\big)\big\|_X \\
&\qquad\qquad\qquad + h_t\big\|u\big\|_{L^\infty([0,t],U)}.
\end{align*}
Since $\tilde{f}(0,\cdot)$ is continuous, for any $\eps_2>0$ there exists $\delta_2<\delta$ so that $u(s) \in B_{\delta_2}$ implies that 
$\|\tilde{f}(0,u(s))-\tilde{f}(0,0)\|_X \leq \eps_2$. Since $\tilde{f}(0,0)=0$, for the above $u$ we have
$\|\tilde{f}(0,u(s))\|_X \leq \eps_2$.

As $\tilde{f}$ is uniformly globally Lipschitz, there is $L>0$ such that for the inputs satisfying $\|u\|_\Uc \leq \delta_2$ we have  
\begin{align*}
\|\tilde{\phi}(t,0,u)\|_X \leq c_tL\esssup_{s\in[0,t]}\big\|\tilde{\phi}(s,0,u)\big\|_X + c_t \varepsilon_2 + h_t\delta_2.
\end{align*}
As $c_t\to 0$ for $t\to+0$, there is $t_1>0$, such that $c_{t_1}L\leq \frac{1}{2}$.
 
Then we have that
\begin{align}
\label{eq:CEP-estimate-2}
\|\tilde{\phi}(t,0,u)\|_X \leq 2c_{t_1} \varepsilon_2 + 2h_{t_1}\delta_2,\quad t \leq t_1.
\end{align}

Combining \eqref{eq:CEP-estimate-1} with \eqref{eq:CEP-estimate-2}, we see that whenever $\|x\|_X\leq \delta_2$ and $\|u\|_\Uc \leq \delta_2$, it holds that 
\begin{eqnarray*}
\|\tilde{\phi}(t,x,u) \|_X 
             \leq L(\tau,\delta_2)\delta_2 + 2c_{t_1} \varepsilon_2 + 2h_{t_1}\delta_2,\quad t \leq t_1.
\end{eqnarray*}
Now for any $\varepsilon<1$ we can find $\delta_2<\varepsilon$, such that 
\begin{eqnarray*}
\|\tilde{\phi}(t,x,u) \|_X 
             \leq  \varepsilon,\quad t \leq t_1,\quad \|x\|_X\leq \delta_2,\quad \|u\|_\Uc \leq \delta_2.
\end{eqnarray*}
As $\tilde{\phi}(t,x,u) = \phi(t,x,u)$ whenever $\|\tilde{\phi}(t,x,u)\|_X<1$, we obtain that 
\begin{eqnarray*}
\|\phi(t,x,u) \|_X  \leq  \varepsilon,\quad t \leq t_1,\quad \|x\|_X\leq \delta_2,\quad \|u\|_\Uc \leq \delta_2.
\end{eqnarray*}
Note that $t_1$ depends on $L$ only, and does not depend on $\delta_2$. 
Thus, one can find $\delta_3<\delta_2$, such that
\begin{eqnarray*}
\|\phi(t,x,u) \|_X  \leq  \delta_2,\quad t \leq t_1,\ \|x\|_X\leq \delta_3,\ \|u\|_\Uc \leq \delta_3.
\end{eqnarray*}
By \amc{the} cocycle property, we obtain that 
\begin{eqnarray*}
\|\phi(t,x,u) \|_X \leq  \varepsilon,\quad t \leq 2t_1,\ \|x\|_X\leq \delta_3,\ \|u\|_\Uc \leq \delta_3.
\end{eqnarray*}
Iterating this procedure, we obtain that there is some $\omega>0$, such that 
\begin{eqnarray*}
\|\phi(t,x,u) \|_X \leq \varepsilon,\quad t \in [0,\tau],\ \|x\|_X\leq \omega,\ \|u\|_\Uc \leq \omega.
\end{eqnarray*}
This shows the CEP property.
\end{proof}

%===============================================================================

\section{Semilinear analytic systems}
\label{sec:Semilinear analytic boundary control systems}

\subsection{Preliminaries for analytic semigroups and admissibility}
\label{sec:Preliminaries for analytic semigroups}

Recall that $\omega_0(T)$ denotes the growth bound of the semigroup $T$. 
Pick any $\omega>\omega_0(T)$ and define the space $X_\alpha$ and \amc{the norm in it as} 
\begin{eqnarray}
X_\alpha:=D((\omega I-A)^\alpha),\quad \|x\|_{X_\alpha}:=\|(\omega I-A)^\alpha x\|_X,\quad x \in X_\alpha.
\label{eq:X-power-alpha}
\end{eqnarray} 
Furthermore, define the spaces $X_{-\alpha}$ as the completion of $X$ with respect to the norm $x \mapsto \|(\omega I-A)^{-\alpha}x\|_X$. 
For the theory of fractional powers of operators and fractional spaces, see \cite{Haase06} and \cite[Section 1.4]{Hen81} and, for a very brief description of the essentials required here, \cite{Sch20}.

The following well-known property holds:
\begin{proposition}
\label{prop:Analytic-semigroup-Bound-on-A^alphaT(t)} 
Let $T$ be an analytic semigroup on a Banach space $X$ with the generator $A$.
\amc{Then for each $\omega,\kappa>\omega_0(T)$, each $\alpha \in[0,1)$, and each $t>0$} we have $\im(T(t)) \subset X_\alpha$, and there is $C_\alpha>0$ such that 
\begin{eqnarray}
\|(\omega I-A)^\alpha T(t)\|  \leq \frac{C_\alpha}{t^\alpha}e^{\kappa t},\quad t>0.
\label{eq:Bound-on-A^alphaT(t)}
\end{eqnarray}
Furthermore, the map $t\mapsto (\omega I-A)^\alpha T(t)$ is continuous on $(0,+\infty)$ in the uniform operator topology.
\end{proposition}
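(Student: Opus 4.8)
The plan is to set $B := \omega I - A$ and reduce everything to standard facts about sectorial operators. Since $\omega > \omega_0(T) \ge s(A)$, we have $\omega \in \rho(A)$, so $B$ is a sectorial operator with $0 \in \rho(B)$; hence the fractional powers $B^\alpha = (\omega I - A)^\alpha$ are well-defined closed operators with nested domains $D(B^\beta) \subseteq D(B^\alpha)$ for $\beta \ge \alpha$ (see \cite[Section 1.4]{Hen81}, \cite{Haase06}). Because $T$ is analytic, $\im(T(t)) \subseteq D(A) = D(B)$ for every $t > 0$, and since $\alpha < 1$ this gives $\im(T(t)) \subseteq D(B^\alpha) = X_\alpha$, which is the first assertion. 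For the norm bound I would split $t$ into the two regimes $(0,1]$ and $[1,\infty)$.

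For small times $t \in (0,1]$ I would invoke the interpolation (moment) inequality $\|B^\alpha x\| \le C \|Bx\|^\alpha \|x\|^{1-\alpha}$, valid for $x \in D(B)$, applied to $x = T(t)y$. The standard analytic-semigroup estimate $\|A T(t)\| \le C_1 t^{-1} e^{\kappa t}$, together with $\|T(t)\| \le M$ and $\|B T(t)\| \le \|A T(t)\| + \omega \|T(t)\|$, yields $\|B T(t)\| \le C_2 t^{-1}$ and hence $\|B^\alpha T(t)\| \le C_3 t^{-\alpha}$ on $(0,1]$; since $e^{\kappa t}$ is bounded below by a positive constant there, this already has the desired form $C_\alpha t^{-\alpha} e^{\kappa t}$.

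For large times $t \ge 1$ I would choose an intermediate rate $\kappa'$ with $\omega_0(T) < \kappa' < \kappa$, so that $\|T(s)\| \le M' e^{\kappa' s}$ for all $s \ge 0$. Using $\im(T(t-\tfrac12)) \subseteq D(B^\alpha)$ and the commutation $B^\alpha T(s) z = T(s) B^\alpha z$ on $D(B^\alpha)$, I would factor $B^\alpha T(t) = T(t-\tfrac12)\,B^\alpha T(\tfrac12)$ and estimate $\|B^\alpha T(t)\| \le M' e^{\kappa'(t-1/2)} \|B^\alpha T(\tfrac12)\| \le C_4 e^{\kappa' t}$. The remaining point — and this is precisely the step that needs the buffer $\kappa' < \kappa$ — is to absorb the polynomial factor: since $\sup_{t \ge 1} t^\alpha e^{(\kappa'-\kappa)t} < \infty$, one gets $C_4 e^{\kappa' t} \le C_5\, t^{-\alpha} e^{\kappa t}$, completing the bound on $[1,\infty)$. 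Combining the two regimes yields the claimed estimate with a single constant $C_\alpha$.

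Finally, for continuity in the uniform operator topology I would use that analytic semigroups are immediately norm-continuous, i.e. $s \mapsto T(s)$ is continuous (indeed holomorphic) from $(0,\infty)$ into $L(X)$. Fixing $t_0 > 0$ and writing, for $t$ near $t_0$, $B^\alpha T(t) = \big[B^\alpha T(\tfrac{t_0}{2})\big]\,T\big(t - \tfrac{t_0}{2}\big)$ with the bounded operator $B^\alpha T(t_0/2)$ held fixed, the continuity of $t \mapsto T(t - t_0/2)$ near $t_0$ transfers to $t \mapsto B^\alpha T(t)$. The main obstacle throughout is the domain and commutation bookkeeping needed to justify these factorizations, together with the $\kappa'$-buffer trick required to match the exponential rate exactly to the prescribed $\kappa$ for large $t$; the small-time singularity itself is routine once the moment inequality is in place.
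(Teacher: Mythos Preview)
The paper does not actually prove this proposition; it introduces it with the phrase ``The following well-known property holds'' and simply states it, deferring the background on fractional powers to \cite{Haase06} and \cite[Section~1.4]{Hen81}. So there is no proof in the paper to compare against.

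Your argument is a correct and standard one. The reduction to $B=\omega I-A$ with $0\in\rho(B)$, the use of the moment inequality $\|B^\alpha x\|\le C\|Bx\|^\alpha\|x\|^{1-\alpha}$ on $(0,1]$, the factorization $B^\alpha T(t)=T(t-\tfrac12)B^\alpha T(\tfrac12)$ on $[1,\infty)$ with the $\kappa'<\kappa$ buffer to absorb the polynomial, and the norm-continuity of analytic semigroups for the final continuity claim are all valid. Two minor points worth tightening in a clean write-up: (i) the moment inequality is legitimate because $-B$ generates the exponentially stable analytic semigroup $e^{-\omega t}T(t)$, which is exactly the setting of, e.g., \cite[Theorem~2.6.10]{Paz83}; (ii) when you say ``$e^{\kappa t}$ is bounded below by a positive constant'' on $(0,1]$, note that $\kappa$ may be negative, so the lower bound is $\min(1,e^{\kappa})$ --- this is still fine, but should be said explicitly.
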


Next, we formulate a sufficient condition for the zero-class admissibility of input operators for analytic systems. 
Part (ii) of the following proposition is (up to the zero-class statement) contained in \cite[Proposition 2.13]{Sch20}.
We however provide a short proof based on the statement (i) to be self-contained.
\begin{proposition}
\label{prop:ISS-analytic-systems} 
Assume that $A$ generates an analytic semigroup $T$ and $B\in L(U,X_{-1+\alpha})$ for some $\alpha \in (0,1)$. 
Then:

\begin{enumerate}[label = (\roman*)]
	\item For any $\omega>\omega_0(T)$, any $d \in[0,1)$ the operator $(\omega I-A_{-1})^{d}$ is zero-class $p$-admissible for any $p\in (\frac{1}{1-d},+\infty]$. 
	In particular, for any $g \in L^p_{\loc}(\R_+,X)$, the following map 
\begin{eqnarray}
\amc{\xi:t \mapsto \int_0^t (\omega I-A)^d T(t-s)g(s)ds = \int_0^t T_{-1}(t-s)(\omega I-A)^d g(s)ds}
\label{eq:Continuity-mild-solutions-analytic-extension}
\end{eqnarray}
is well-defined and continuous on $\R_+$.

Furthermore, for any $\kappa>\omega_0(T)$ there is $R=R(\kappa,d)$ such that for any $g \in L^\infty_{\loc}(\R_+,X)$ the following holds:
\begin{align}
\int_0^t \big\|(\omega I-A)^d  T(t-s) g(s)\big\|_X ds  \leq R t^{1-d} e^{\kappa t} \| g\|_{L^\infty([0,t],X)}.
\label{eq:Convolution-analytic-for-Linfty-norm}
\end{align}

	\item $B$ is zero-class $q$-admissible for $q\in(\frac{1}{\alpha},+\infty]$.
	\item For any $\omega>\omega_0(T)$, any $d \in[0,\alpha)$ the operator $(\omega I-A)^{d}B$ is zero-class $\infty$-admissible.
	%\item For any $\omega>\omega_0(T)$, any $d \in[0,\alpha)$, and any $\kappa>\omega_0(T)$ there is $R>0$ such that for any $g \in L^\infty_{\loc}(\R_+,X)$ the map
	%\begin{align}
	%\label{eq:Convolution-map-analytic}
	%t\mapsto \int_0^t (\omega I-A)^d  T(t-s) Bg(s) ds
	%\end{align}
%is continuous in $X$-norm, and the following holds:
%\begin{align}
%\int_0^t \big\|(\omega I-A)^d  T(t-s) Bg(s)\big\|_X ds  \leq R t^{\alpha-d} e^{\kappa t} \| g\|_{L^\infty([0,t],X)}.
%\label{eq:Generalized-admissibility-estimate-analytic}
%\end{align}
\end{enumerate}
%
%
%Consider a linear boundary control system $(\Ah,\Rh)$ on a Banach space $X$
 %with the corresponding operators $A$, $G$ and let $B$ be defined by \eqref{eq:BCS-Input-Operator}.
%Let $\Sigma:=(X,U,\phi)$ be the control system, where $\phi$ is the flow, generated by mild solutions of $(\Ah,\Rh)$.
%
%Assume that $A$ generates an exponentially stable
%analytic semigroup $T$ and that one of the following properties holds for some $\alpha \in (0,1]$:
%\begin{enumerate}[label = (\roman*)]
	%\item\label{itm:ISS-analytic-systems-1} $G\in L(U,X_{\alpha})$
	%\item\label{itm:ISS-analytic-systems-2} $B\in L(U,X_{-1+\alpha})$
	%\item\label{itm:ISS-analytic-systems-3} $B^*\in L(X^*_{1-\alpha},U^*)$ and $X$ is reflexive.
%\end{enumerate}
%Then $\Sigma$ is $L^q$-ISS for $q\in(\frac{1}{\alpha},+\infty]$.
\end{proposition}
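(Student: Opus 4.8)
The plan is to prove part~(i) as the analytic heart of the statement and then deduce (ii) and (iii) from it by a short factorization argument. For part~(i), the first step is to apply Proposition~\ref{prop:Analytic-semigroup-Bound-on-A^alphaT(t)} with fractional exponent $d$ to obtain the smoothing bound $\|(\omega I-A)^d T(r)\|\le C_d r^{-d}e^{\kappa r}$ for all $r>0$. Since $d<1$, the resulting kernel $r\mapsto C_d r^{-d}e^{\kappa r}$ is only weakly singular at $r=0$. Strong measurability of $s\mapsto (\omega I-A)^d T(t-s)g(s)$ then follows from the operator-norm continuity of $r\mapsto (\omega I-A)^d T(r)$ on $(0,\infty)$ together with strong measurability of $g$, so the integral defining $\xi(t)$ is a genuine Bochner integral in $X$. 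To get the explicit bound and the zero-class property, I would estimate $\int_0^t\|(\omega I-A)^dT(t-s)g(s)\|_X\,ds$ from above by $C_de^{\kappa t}\int_0^t (t-s)^{-d}\|g(s)\|_X\,ds$ and then, in the $L^\infty$ case, pull out $\|g\|_{L^\infty([0,t],X)}$ and use $\int_0^t (t-s)^{-d}\,ds=t^{1-d}/(1-d)$, which yields \eqref{eq:Convolution-analytic-for-Linfty-norm} with $R=C_d/(1-d)$. For $p\in(\tfrac{1}{1-d},\infty)$ I would instead apply Hölder's inequality with conjugate exponent $p'$; the integral $\int_0^t(t-s)^{-dp'}\,ds$ is finite precisely because $p>\tfrac{1}{1-d}$ is equivalent to $dp'<1$, and it equals a constant times $t^{1-dp'}$. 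In both cases the prefactor is a strictly positive power of $t$ (namely $t^{1-d}$, resp.\ $t^{1/p'-d}$), which tends to $0$ as $t\to+0$; this is exactly the zero-class admissibility constant $h_t\to 0$.

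The delicate step, and the one I expect to be the main obstacle, is continuity of $\xi$ on $\R_+$, since the kernel is singular and $t$ appears both in the upper limit and inside the semigroup argument. Continuity at $t=0$ is immediate from the bound above, as $\|\xi(t)\|_X\le R\,t^{1-d}e^{\kappa t}\|g\|_{L^\infty([0,t],X)}\to 0$. For continuity at a fixed $t_0>0$, I would fix a small $\eta>0$ and split the difference $\xi(t)-\xi(t_0)$ into an integral over $[0,t_0-\eta]$ and a remainder over the $\eta$-neighbourhood of the endpoint. On $[0,t_0-\eta]$ the arguments $t-s$ and $t_0-s$ stay bounded away from $0$ for $t$ close to $t_0$, so the uniform-operator-topology continuity of $r\mapsto (\omega I-A)^dT(r)$ asserted in Proposition~\ref{prop:Analytic-semigroup-Bound-on-A^alphaT(t)} forces this part of the difference to $0$ as $t\to t_0$; the remainder near the endpoint is controlled, uniformly in $t$, by the weakly-singular estimate already derived and is of order $\eta^{1-d}$ (resp.\ $\eta^{1/p'-d}$). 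Letting $\eta\to0$ afterwards yields continuity from both sides. Alternatively, one may cite \cite[Proposition 2.5]{JNP18}, which provides exactly this continuity for zero-class admissible operators; I would keep the self-contained $\eta$-splitting argument for transparency.

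With part~(i) in hand, parts~(ii) and (iii) follow by factoring $B$ through a fractional power and a bounded operator. For (ii), writing $B\in L(U,X_{-1+\alpha})=L(U,X_{-(1-\alpha)})$, I would factor $B=(\omega I-A_{-1})^{1-\alpha}\circ\big[(\omega I-A_{-1})^{-(1-\alpha)}B\big]$, where the bracketed operator lies in $L(U,X)$ because $(\omega I-A_{-1})^{-(1-\alpha)}$ is an isomorphism of $X_{-(1-\alpha)}$ onto $X$. Using the commutation identity $T_{-1}(t-s)(\omega I-A_{-1})^{1-\alpha}=(\omega I-A)^{1-\alpha}T(t-s)$ recorded in \eqref{eq:Continuity-mild-solutions-analytic-extension}, the admissibility convolution for $B$ becomes the convolution of part~(i) with $d=1-\alpha$ applied to $g:=(\omega I-A_{-1})^{-(1-\alpha)}Bu\in L^q_{\loc}(\R_+,X)$; since part~(i) yields zero-class $q$-admissibility for $q>\tfrac{1}{1-(1-\alpha)}=\tfrac1\alpha$, composition with the bounded operator preserves the zero-class bound. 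Part~(iii) is the same argument one notch further: $(\omega I-A)^dB\in L(U,X_{-(1-\alpha+d)})$ with $1-\alpha+d\in(0,1)$ for $d\in[0,\alpha)$, so factoring through $(\omega I-A_{-1})^{1-\alpha+d}$ and invoking part~(i) gives zero-class $q$-admissibility for every $q>\tfrac{1}{\alpha-d}$, and in particular zero-class $\infty$-admissibility. The only points needing care here are the commutation identities between fractional powers and the extrapolated semigroup and the isomorphism property of the fractional spaces, both of which are standard.
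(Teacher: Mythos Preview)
Your proposal is correct and follows essentially the same route as the paper: the smoothing bound from Proposition~\ref{prop:Analytic-semigroup-Bound-on-A^alphaT(t)}, strong measurability, H\"older's inequality (resp.\ the direct $L^\infty$ computation giving $R=C_d/(1-d)$), and then the factorization $B=(\omega I-A_{-1})^{1-\alpha}\big[(\omega I-A)^{-1+\alpha}B\big]$ to reduce (ii) and (iii) to (i). The only noticeable difference is that for continuity of $\xi$ the paper simply cites \cite[Proposition~2.3]{Wei89b}, whereas you sketch a self-contained $\eta$-splitting argument; both are fine, and your version has the advantage of not relying on an external reference.
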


\begin{proof}
\textbf{(i).} 
Since $T$ is an analytic semigroup, $T(t)$ maps $X$ to $D(A)$ for any $t>0$. \amc{As $D(A)\subset X_d$} for all $d\in[0,1]$, 
the integrand in \eqref{eq:Continuity-mild-solutions-analytic-extension} is in $X$ for a.e. $s\in[0,t)$.
Let us show \amc{the} Bochner integrability of $X$-valued map $s \mapsto (\omega I-A)^d T(t-s)g(s)$ on $[0,t]$.

As $g \in L^{1}_{\loc}(\R_+,X)$, by the criterion of Bochner integrability, $g$ is strongly measurable and $\int_I \|g(s)\|_X ds < \infty$ for any bounded interval $I \subset\R_+$.

Denote by $\chi_{\Omega}$ the characteristic function of the set $\Omega\subset \R_+$. Recall that the map $t\mapsto (\omega I-A)^d T(t)$ is continuous outside of $t=0$ in view of Proposition~\ref{prop:Analytic-semigroup-Bound-on-A^alphaT(t)}. 

If $g(s) = \chi_{\Omega}(s)x$ for some measurable $\Omega \subset \R_+$ and $x \in X$, then the function
\[
s\mapsto (\omega I-A)^d T(t-s)g(s) = (\omega I-A)^d T(t-s)\chi_{\Omega}(s)x
\]
is measurable as a product of a measurable scalar function and a continuous (and thus measurable) vector-valued function.
By linearity, $s\mapsto (\omega I-A)^d T(t-s)g(s)$ is strongly measurable if $g$ is a simple function (see \cite[Section 1.1]{ABH11} for definitions). 

As $g$ is strongly measurable, there is a sequence of simple functions $(g_n)_{n\in\N}$, converging pointwise to $g$ almost everywhere. 
Consider a sequence 
\begin{eqnarray}
\big(s\mapsto (\omega I-A)^d T(t-s)g_n(s)\big)_{n\in\N}
\label{eq:SimpleFun-Seq}
\end{eqnarray}
and take any $s \in [0,t)$ such that $g_n(s) \to g(s)$ as $n\to\infty$.
We have that 
\begin{align*}
%\label{eq:}
\big\|(\omega I-A)^d T(t-s)&g_n(s)- (\omega I-A)^d T(t-s)g(s)\big\|_X \\
&\leq \|(\omega I-A)^d T(t-s)\| \|g_n(s)-g(s)\|_X \to 0,\quad n\to\infty.
\end{align*}
Hence a sequence of strongly measurable functions \eqref{eq:SimpleFun-Seq} converges a.e. to $s\mapsto (\omega I-A)^d T(t-s)g(s)$,
and thus $s\mapsto (\omega I-A)^d T(t-s)g(s)$ is strongly measurable by \cite[Corollary 1.1.2]{ABH11}.
%Proposition~\ref{prop:Limit-of-measurable-functions-is-measurable}.

Furthermore, for any $t >0$, using Proposition~\ref{prop:Analytic-semigroup-Bound-on-A^alphaT(t)}, we have that for any $\kappa>\omega_0(T)$ there is $C_d>0$ such that
\begin{align}
\int_0^t \big\|(\omega I-A)^d  T(t-s) g(s)\big\|_X ds  
&\leq \int_0^t \|(\omega I-A)^d T(t-s)\| \| g(s)\|_X ds \nonumber \\
&\leq \int_0^t \frac{C_d}{(t-s)^d}e^{\kappa (t-s)} \| g(s)\|_X ds \nonumber\\
&\leq C_d e^{\kappa t}\int_0^t \frac{1}{(t-s)^d} \| g(s)\|_X ds.
\label{eq:Convolution-boundedness-for-Linfty-norm}
\end{align}

Using H\"older's inequality with a finite $p>\frac{1}{1-d}$, we obtain
\begin{align}
\int_0^t \|(\omega I-A)^d  T(t-s) g(s)\|_X ds  
&\leq C_d e^{\kappa t} \Big(\int_0^t \Big(\frac{1}{(t-s)^d}\Big)^b ds\Big)^{\frac{1}{b}}  \Big( \int_0^t \| g(s)\|^p_X ds\Big)^{\frac{1}{p}}\nonumber\\
&\leq \frac{C_d}{(1-d b)^{1/b}} e^{\kappa t} t^{\frac{1-d b}{b}}  \Big( \int_0^t \| g(s)\|^p_X ds\Big)^{\frac{1}{p}},
\label{eq:Convolution-boundedness}
\end{align}
where $\frac{1}{b} + \frac{1}{p} = 1$, and thus $b$ satisfies $b<\frac{1}{d}$.

Finally, by \cite[Theorem 1.1.4]{ABH11}, %Theorem~\ref{thm:BochnerIntKrit} 
the map $s \mapsto (\omega I-A)^d  T(t-s)g(s)$ is Bochner integrable on each $[0,t]\subset \R_+$. 
This shows $p$-admissibility of $(\omega I-A)^d$, and if $p<+\infty$, \eqref{eq:Convolution-boundedness} implies zero-class $p$-admissibility of 
$(\omega I-A)^d$. Continuity of the map $\xi$ follows from \cite[Proposition 2.3]{Wei89b}.

For the last claim of item (i), we take $g\in L^\infty_{\loc}(\R_+,X)$ and continue the estimates in \eqref{eq:Convolution-boundedness-for-Linfty-norm} as follows:
\begin{align*}
\int_0^t \big\|(\omega I-A)^d  T(t-s) g(s)\big\|_X ds  
&\leq C_d e^{\kappa t} \int_0^t \frac{1}{(t-s)^d} ds  \| g\|_{L^\infty([0,t],X)},
%\label{eq:Convolution-boundedness-for-Linfty-norm}
\end{align*}
and \eqref{eq:Convolution-analytic-for-Linfty-norm} holds with $R=\frac{C_d}{1-d}$.
This implies zero-class $\infty$-admissibility of $(\omega I-A)^d$.

\textbf{(ii).} Take any $\omega>\omega_0(T)$, and 
consider the corresponding norm on $X_{-1+\alpha}$:
% as in Definition~\ref{def:X_-alpha}. 
\begin{align*}
%\label{eq:}
\|B\|_{L(U,X_{-1+\alpha})} 
&= \sup_{u\in U:\|u\|_U=1}\|Bu\|_{X_{-1+\alpha}}\\
&= \sup_{u\in U:\|u\|_U=1}\|(\omega I-A)^{-1+\alpha}Bu\|_{X} = \|(\omega I-A)^{-1+\alpha}B\|_{L(U,X)}.
\end{align*}
Thus, the condition $B\in L(U,X_{-1+\alpha})$ is equivalent to $(\omega I-A)^{-1+\alpha}B \in L(U,X)$.
With this in mind, we have
\begin{eqnarray}
T_{-1}(t)B &=& T_{-1}(t)(\omega I-A)^{1-\alpha}(\omega I-A)^{-1+\alpha}B.
\label{eq:Analytic-systems-1}
\end{eqnarray}
Due to \cite[Theorem 2.6.13, p. 74]{Paz83}, on $X_{1-\alpha} = D((\omega I-A)^{1-\alpha})$ it holds that
\[
T_{-1}(t)(\omega I-A)^{1-\alpha} = (\omega I-A)^{1-\alpha}T_{-1}(t).
\]
Now take any $f \in L^q_{\loc}(\R_+,U)$ with $q>\frac{1}{\alpha}$. Representing 
\begin{align*}
	%\label{eq:Convolution-map-analytic}
\int_0^t T_{-1}(t-s) Bf(s) ds 
= \int_0^t (\omega I-A)^{1-\alpha} T_{-1}(t-s) (\omega I-A)^{\alpha-1} Bf(s) ds
\end{align*}
and applying item (i) of this proposition and in particular the estimate \eqref{eq:Convolution-boundedness} with $d:=1-\alpha$, $p:=q$, and with $g:=(\omega I-A)^{-1+\alpha}B f$ we see that $B$ is zero-class $q$-admissible for $q \in (\frac{1}{\alpha},+\infty)$.

\textbf{(iii).} It holds that
$\|(\omega I-A)^{d} B\|_{L(U,X_{-1+\alpha-d})} = \|(\omega I-A)^{-1+\alpha}B\|_{L(U,X)}$, 
and item (i) implies the claim. 
%\textbf{(iii).} Follows from (ii).
\end{proof}

%As a corollary, we obtain the continuity of the maps considered in Lemma~\ref{lem:Auxiliary-map-analytic}.
%
%
%
%\begin{corollary}
%\label{cor:Auxiliary-map-analytic} 
%Let $T$ be an analytic semigroup, $d\in[0,1)$, and \amc{$g \in L^p_{\loc}(\R_+,X)$ with some $p>\frac{1}{1-d}$.} Pick any $\omega>\omega_0(T)$. Then the map
%\begin{eqnarray}
%\xi:t \mapsto \int_0^t (\omega I-A)^d T(t-s)g(s)ds
%\label{eq:Continuity-mild-solutions-analytic-extension}
%\end{eqnarray}
%is well-defined and continuous on $\R_+$.
%
%
%\end{corollary}
%
%\begin{proof}
%The map $\xi$ can be represented as 
%\begin{eqnarray}
%\xi(t) = \int_0^t T_{-1}(t-s)(\omega I-A)^d g(s)ds.
%\label{eq:Continuity-mild-solutions-analytic-extension}
%\end{eqnarray}
%By item (iii) of Proposition~\ref{prop:ISS-analytic-systems}, $(\omega I-A)^d$ is zero-class $p$-admissible, and 
%\cite[Proposition 2.5]{JNP18} shows the continuity of $\xi$.
%
%For the last claim, we take $g\in L^\infty_{\loc}(\R_+,X)$ and continue the estimates in \eqref{eq:Convolution-boundedness-for-Linfty-norm} as follows:
%\begin{align*}
%\int_0^t \big\|(\omega I-A)^d  T(t-s) g(s)\big\|_X ds  
%&\leq C_d e^{\kappa t} \int_0^t \frac{1}{(t-s)^d} ds  \| g\|_{L^\infty([0,t],X)},
%%\label{eq:Convolution-boundedness-for-Linfty-norm}
%\end{align*}
%and we obtain \eqref{eq:Convolution-analytic-for-Linfty-norm} with $R=\frac{C_d}{1-d}$.
%\end{proof}
 %
\ifAndo
\mir{
If $A$ is additionally self-adjoint operator, 
the equality may hold in the following result, i.e., we have admissibility for $q=1/\alpha$. 

Question: is there any partial converse: if we have $q$-admissibility for a certain $q$, and the semigroup is analytic, then does it hold that $B\in L(U,X_{-1+\alpha})$ for a certain $\alpha >0$?
}
\fi

%We close this subsection with
\begin{proposition}
\label{prop:Continuity-analytic-systems} 
Assume that $A$ generates an analytic semigroup $T$ and $B\in L(U,X_{-1+\alpha})$ for some $\alpha \in (0,1)$. 

For any $\omega>\omega_0(T)$, any $d \in[0,\alpha)$, and any $\kappa>\omega_0(T)$ there is $R>0$ such that for any $g \in L^\infty_{\loc}(\R_+,X)$ the map
	\begin{align}
	\label{eq:Convolution-map-analytic}
	t\mapsto \int_0^t (\omega I-A)^d  T(t-s) Bg(s) ds
	\end{align}
is continuous in $X$-norm, and the following holds:
\begin{align}
\int_0^t \big\|(\omega I-A)^d  T(t-s) Bg(s)\big\|_X ds  \leq R t^{\alpha-d} e^{\kappa t} \| g\|_{L^\infty([0,t],X)}.
\label{eq:Generalized-admissibility-estimate-analytic}
\end{align}
\end{proposition}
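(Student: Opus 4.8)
The plan is to reduce the whole statement to item (i) of Proposition~\ref{prop:ISS-analytic-systems} by the same factorization that already appears in the proofs of its items (ii) and (iii). First I would record, exactly as in the proof of part (ii), that the hypothesis $B\in L(U,X_{-1+\alpha})$ is equivalent to $\tilde B:=(\omega I-A)^{-1+\alpha}B\in L(U,X)$, with $\|\tilde B\|_{L(U,X)}=\|B\|_{L(U,X_{-1+\alpha})}$. This lets me write $B$ formally as $(\omega I-A)^{1-\alpha}\tilde B$, moving the unboundedness of $B$ into a fractional power of the generator.

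Next I would use that $(\omega I-A)^{1-\alpha}$ commutes with $T_{-1}(t-s)$ on $X_{1-\alpha}$ (by \cite[Theorem 2.6.13, p. 74]{Paz83}, invoked already in the proof of part (ii)) to rewrite the integrand, for $t>s$, as
\[
(\omega I-A)^d T(t-s)Bg(s)=(\omega I-A)^{d+1-\alpha}T(t-s)\tilde B g(s).
\]
Here $\tilde B g(s)\in X$, and the analyticity of $T$ guarantees that $T(t-s)$ maps into $D((\omega I-A)^\beta)$ for every $\beta$ when $t>s$, so each fractional power acts on an admissible element and the manipulation is legitimate pointwise almost everywhere.

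Then I set $d':=d+1-\alpha$. Since $d\in[0,\alpha)$, we have $d'\in[1-\alpha,1)\subset[0,1)$, which is exactly the range covered by item (i). Applying item (i) with exponent $d'$ to the map $s\mapsto \tilde B g(s)\in L^\infty_{\loc}(\R_+,X)$ immediately yields both the continuity of $t\mapsto\int_0^t(\omega I-A)^{d'}T(t-s)\tilde B g(s)\,ds$ and the bound \eqref{eq:Convolution-analytic-for-Linfty-norm}, namely
\[
\int_0^t\big\|(\omega I-A)^{d'}T(t-s)\tilde B g(s)\big\|_X\,ds\leq R' t^{1-d'}e^{\kappa t}\,\|\tilde B g\|_{L^\infty([0,t],X)}.
\]
Using $1-d'=\alpha-d$ together with $\|\tilde B g\|_{L^\infty([0,t],X)}\leq\|\tilde B\|_{L(U,X)}\,\|g\|_{L^\infty([0,t],U)}$, I would conclude \eqref{eq:Generalized-admissibility-estimate-analytic} with $R:=R'\,\|\tilde B\|_{L(U,X)}$.

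The computation is routine once the factorization is in place; the only delicate point is the bookkeeping in the interpolation and extrapolation spaces, i.e.\ verifying that $(\omega I-A)^d T(t-s)B=(\omega I-A)^{d+1-\alpha}T(t-s)(\omega I-A)^{-1+\alpha}B$ holds as an identity of bounded operators, so that the commutation and the cancellation of fractional powers are valid on the relevant domains. This is precisely the step already carried out in the proof of part (ii), so I would mirror that argument and let the continuity and the estimate follow mechanically from item (i).
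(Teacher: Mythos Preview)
Your proposal is correct and follows essentially the same route as the paper: factor $B=(\omega I-A)^{1-\alpha}\tilde B$ with $\tilde B=(\omega I-A)^{-1+\alpha}B\in L(U,X)$, commute the fractional power past the semigroup, and apply item (i) of Proposition~\ref{prop:ISS-analytic-systems} with the shifted exponent $d'=1-\alpha+d\in[0,1)$ to the $X$-valued function $\tilde Bg$. Your bookkeeping (including the use of $\|g\|_{L^\infty([0,t],U)}$ rather than $X$) is in fact cleaner than the stated bound, since $g$ should take values in $U$ for $Bg$ to make sense.
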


\begin{proof}
For $d<\alpha$ and $g \in L^\infty_{\loc}(\R_+,U)$, consider the map
\begin{eqnarray*}
s \mapsto (\omega I-A)^d T_{-1}(t-s)Bg(s) &=& (\omega I-A)^{1-\alpha+d}T_{-1}(t-s)(\omega I-A)^{-1+\alpha}Bg(s).
\end{eqnarray*}
By item (i) of Proposition~\ref{prop:ISS-analytic-systems}, this map is Bochner integrable and in view of \eqref{eq:Convolution-analytic-for-Linfty-norm} with $1-\alpha+d$ instead of $d$ and $(\omega I-A)^{-1+\alpha}Bg$ instead of $g$, we see that the map 
	\eqref{eq:Convolution-map-analytic} is continuous and \eqref{eq:Generalized-admissibility-estimate-analytic} holds. 
%Setting $d:=0$ in \eqref{eq:Generalized-admissibility-estimate-analytic}, we obtain also zero-class $\infty$-admissibility of $B$.
\end{proof}

\ifAndo
\amc{
\begin{remark}
\label{rem:sufficient-conditions-for-admissibility} 
(Remark from Felix, reformulated) 
Proposition~\ref{prop:ISS-analytic-systems} 
can be used to verify ISS for linear parabolic boundary control problems
as the assumption can often be checked by known properties of boundary trace operators. 
Recent results on the methods how to check $\infty$-admissibility of input operators for linear diagonal 
semigroup systems could be found, e.g., in \cite{JPP21}, see also \cite{JPP14}.
\end{remark}
}
\fi

\subsection{Semilinear analytic systems and their mild solutions}
\label{sec:Semilinear analytic systems and their mild solutions}

Consider again the system \eqref{eq:SEE+admissible} with $B_2=\id$ that we restate next:
\begin{subequations}
\label{eq:SEE+admissible-analytic} 
\begin{eqnarray}
\dot{x}(t) & = & Ax(t) + f(x(t),u(t)) + Bu(t),\quad t>0,  \label{eq:SEE+admissible-analytic-1}\\
x(0)  &=&  x_0, \label{eq:SEE+admissible-analytic-2}
\end{eqnarray}
\end{subequations}
In Section~\ref{sec:Semilinear boundary control systems}, we have assumed that $f$ is a well-defined map from $X \tm U$ to $X$. 
Although it sounds natural, it is, in fact, a quite restrictive assumption, as already basic nonlinearities, such as \amc{pointwise} polynomial maps, do not satisfy it. 
Indeed, if $f(x) = x^2$, where $x \in X:=L^2(0,1)$, then $f$ maps $X$ to the space $L^1(0,1)$.
However, as $A$ generates an analytic semigroup, the requirements on $f$ can be considerably relaxed. 
Namely, we assume in this section that there is $\alpha \in [0,1]$ such that  $f$ is a well-defined map from $X_\alpha \tm U $ to $X$.

We note that systems \eqref{eq:SEE+admissible-analytic} without inputs ($u=0$) have been analyzed several decades ago, see the classical monographs \cite{Hen81,Paz83}. The main difference to these works is the presence of unbounded input operators.

Next, we define mild solutions of \eqref{eq:SEE+admissible-analytic}. Note that the nonlinearity $f$ is defined on $X_\alpha \tm U$, and thus we must require that the mild solution lies in $X_\alpha$ for all positive times. We cannot expect such a nice behavior for general semigroups, but thanks to the smoothing effect of analytic semigroups, this is what we can expect in the analytic case.
\begin{definition}%[Mild solutions]
\label{def:Mild-solution-analytic}
\index{solution!mild}
%Let Assumptions~\ref{ass:Admissibility}, \ref{ass:Integrability-analytic} hold with a corresponding $q\in[1,+\infty]$. 
Let $\tau>0$ and $\alpha\in[0,1]$ be given. 
A function $x \in C([0,\tau], X)$ is called a \emph{mild solution of \eqref{eq:SEE+admissible-analytic} on $[0,\tau]$} corresponding to certain $x_0\in X$ and $u \in L^1_{\loc}(\R_+,U)$, if $x(s) \in X_\alpha$ for $s\in(0,\tau]$, and $x$ solves the integral equation
\begin{align}
\label{eq:SEE+admissible_Integral_Form-analytic}
x(t)=T(t) x_0 + \int_0^t T(t-s) f\big(x(s),u(s)\big)ds + \int_0^t T_{-1}(t-s) Bu(s)ds. %\ x(t) \in X, u(t) \in U.
\end{align}
%Here the integrals are Bochner integrals of $X$-valued maps, and $T_{-1}$ is an extension of the semigroup $T$ to the space $X_{-1}$, see \cite[p.126]{EnN00}.

We say that $x:\R_+\to X$ is a \emph{mild solution of \eqref{eq:SEE+admissible-analytic} on $\R_+$} corresponding to 
certain $x_0\in X$ and $u \in L^1_{\loc}(\R_+,U)$, if \amc{its restriction to $[0,\tau]$} is a mild solution of \eqref{eq:SEE+admissible-analytic} (with $x_0, u$) on $[0,\tau]$ for all $\tau>0$.
\end{definition}

\begin{remark}
%In other words, $x \in C([0,\tau], X)$ is a mild solution of \eqref{eq:SEE+admissible-analytic} on $[0,\tau]$ in the sense of Definition~\ref{def:Mild-solution} if $x(s) \in X_\alpha$ for $s\in(0,\tau]$ and 
%$x \in C([0,\tau], X)$ is a mild solution of \eqref{eq:SEE+admissible-analytic} on $[0,\tau]$ in the sense of Definition~\ref{def:Mild-solution-analytic}. 
Note that if $\alpha = 0$, then $X_\alpha = X_0 = X$, and the concept of a mild solution introduced for general and analytic semigroups coincide.
\end{remark}

\begin{ass}
\label{ass:Regularity-f-analytic} 
Let the following hold:
\begin{enumerate}[label=(\roman*)]
	\item $\alpha \in (0,1)$.
	\item $B\in L(U,X_{-1+\alpha+\varepsilon})$ for sufficiently small $\varepsilon>0$.
	
	\item $f \in C(X_\alpha \tm U, X)$, and $f$ is Lipschitz continuous in the first argument in the following sense: for each $r>0$ there is $L=L(r)>0$ such that for each $x_1,x_2 \in B_{r,X_\alpha}$ and all $u \in B_{r,U}$ it holds that 
\begin{eqnarray}
\|f(x_1,u) - f(x_2,u)\|_X \leq L\|x_1-x_2\|_{X_\alpha}.
\label{eq:Lipschitz-in-x-power-alpha}
\end{eqnarray}
	%\item For all $u\in L^\infty(\R_+,U)$ and any $x\in C(\R_+,X_\alpha)$ the map $s\mapsto f\big(x(s),u(s)\big)$ is in $L^1_{\loc}(\R_+,X)$.
	\item For all $u\in L^\infty(\R_+,U)$ and any $x\in C(\R_+,X)$ with $x((0,+\infty)) \subset X_\alpha$, the map $s\mapsto f\big(x(s),u(s)\big)$ is in $L^p_{\loc}(\R_+,X)$ with a certain $p>\frac{1}{1-\alpha}$.
%\mir{Should it be $x\in C(\R_+,X_\alpha)$ or $x\in C(\R_+,X)$ with image in $X_\alpha$?}
	\item There is $\sigma\in\Kinf$ such that 
\[
\|f(0,u)\|_X \leq \sigma(\|u\|_U) + c,\quad u\in U.
\]
\end{enumerate}
\end{ass}

%As we are interested in existence and uniqueness theory primarily for the $L^\infty$-inputs, we additionally assume the following:
%Furthermore, we require:
%\begin{ass}
%\label{ass:Integrability-analytic} 
%Let Assumption~\ref{ass:Admissibility} hold with a corresponding $q$. 
%We assume that for all $u\in L^\infty(\R_+,U)$ and any $x\in C(\R_+,X_\alpha)$ the map $s\mapsto f\big(x(s),u(s)\big)$ is in $L^1_{\loc}(\R_+,X)$.
%
%\mir{Should it be $x\in C(\R_+,X_\alpha)$ or $x\in C(\R_+,X)$ with image in $X_\alpha$?}
%
%In view of \cite[Proposition 1.3.4]{ABH11}, the map 
%\[
%t \mapsto \int_0^t T(t-s)f\big(x(s),u(s)\big)ds
%\]
%is well-defined, and in view of \cite[Lemma 10.1.6]{JaZ12}, it is continuous (here, we do not use the analyticity of the semigroup).
%\end{ass}

\subsection{Local existence and uniqueness}

%\begin{remark}
%\label{rem:} 
By Proposition~\ref{prop:ISS-analytic-systems}, the condition $B\in L(U,X_{-1+\alpha+\varepsilon})$ with $\alpha,\varepsilon>0$ implies that $B$ is zero-class $q$-admissible for any $q\in(\frac{1}{\alpha+\varepsilon},+\infty]$. This in turn implies that for such $q$ the map $t\mapsto \int_0^t T_{-1}(t-s) Bu(s)ds$ is continuous for any $u \in L^q(\R_+,U)$, by \cite[Proposition 2.3]{Wei89b}.

By Assumption~\ref{ass:Regularity-f-analytic}(iv), we see that for any $u\in L^\infty(\R_+,U)$ the map 
\[
t \mapsto \int_0^t T(t-s)f\big(x(s),u(s)\big)ds
\]
is well-defined and continuous.

Hence, if $x\in C(\R_+,X)$ with $x((0,+\infty)) \subset X_\alpha$, then for any  $u\in L^\infty(\R_+,U)$ the right-hand side of 
\eqref{eq:SEE+admissible_Integral_Form-analytic} is a continuous function of time.
%\end{remark}

Our next result is the local existence and uniqueness theorem for analytic systems with initial states in $X_\alpha$ and the inputs in $\Uc:=L^\infty_{\loc}(\R_+,U)$. Recall the notation $\Uc_\Sc$ from \eqref{eq:Inputs-constrained}.
%\index{theorem!Picard-Lindel\"of}
%\index{Picard-Lindel\"of theorem}
%
%\begin{theorem}[Picard-Lindel\"of theorem for analytic systems]
%\label{PicardCauchy-analytic}
%Let Assumption~\ref{ass:Regularity-f-analytic} hold.
%Assume that $T$ is an analytic semigroup, satisfying for certain $M \geq 1$, $\lambda>0$ the estimate
%\begin{eqnarray*}
%\|T(t)\| \leq Me^{\lambda t},\quad t\geq 0.
%\end{eqnarray*} 
%For any bounded ball $W = B_{r,X_\alpha}(w) \subset X_\alpha$, any bounded set $\Sc \subset U$, and any $\delta>0$, there is a time $t_1 = t_1(W,\Sc,\delta)>0$, such that for any $x_0 \in W$ and any $u\in\Uc_{\Sc}$ there is a unique solution of \eqref{eq:SEE+admissible} on $[0,t_1]$, and it lies in the ball $B_{Mr+\delta,X_\alpha}(w)$.
%\end{theorem}

\begin{theorem}[Picard-Lindel\"of theorem for analytic systems]
\label{PicardCauchy-analytic}
Let Assumption~\ref{ass:Regularity-f-analytic} hold.
Assume that $T$ is an analytic semigroup, satisfying for certain $M \geq 1$, $\lambda>0$ the estimate
\begin{eqnarray*}
\|T(t)\| \leq Me^{\lambda t},\quad t\geq 0.
\end{eqnarray*} 
For any compact set $Q \subset X_\alpha$, any $r>0$, any bounded set $\Sc \subset U$, and any $\delta>0$, there is a time $t_1 = t_1(Q,r,\Sc,\delta)>0$, such that for any $w \in Q$, any $x_0 \in W:=B_{r,X_\alpha}(w)$, and any $u\in\Uc_{\Sc}$ there is a unique mild solution of \eqref{eq:SEE+admissible} on $[0,t_1]$, and it lies in the ball $B_{Mr+\delta,X_\alpha}(w)$. 
%
%For any bounded ball $W = B_{r,X_\alpha}(w) \subset X_\alpha$, any bounded set $\Sc \subset U$, and any $\delta>0$, there is a time $t_1 = t_1(W,\Sc,\delta)>0$, such that for any $x_0 \in W$ and any $u\in\Uc_{\Sc}$ there is a unique solution of \eqref{eq:SEE+admissible} on $[0,t_1]$, and it lies in the ball $B_{Mr+\delta,X_\alpha}(w)$.
\end{theorem}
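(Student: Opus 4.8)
The plan is to run the Banach fixed-point argument of Theorem~\ref{PicardCauchy}, but entirely in the fractional space $X_\alpha$ instead of $X$, using the smoothing bound of Proposition~\ref{prop:Analytic-semigroup-Bound-on-A^alphaT(t)}. As there, I would first treat a single point $Q=\{w\}$ with $w\in X_\alpha$, and only afterwards pass to a general compact $Q$. Fix $C>0$ with $W=B_{r,X_\alpha}(w)\subset B_{C,X_\alpha}$ and $\Uc_\Sc\subset B_{C,\Uc}$, pick $u\in\Uc_\Sc$, and introduce the complete metric space
\[
Y_t:=\Big\{x\in C([0,t],X_\alpha):\ \sup_{s\in[0,t]}\|x(s)-w\|_{X_\alpha}\leq Mr+\delta\Big\},
\]
endowed with $\rho_t(x,y):=\sup_{s\in[0,t]}\|x(s)-y(s)\|_{X_\alpha}$, together with the operator $\Phi_u$ defined by the right-hand side of \eqref{eq:SEE+admissible_Integral_Form-analytic}.

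The core is to show that for small $t$ the map $\Phi_u$ leaves $Y_t$ invariant and is a contraction, with everything measured in the $X_\alpha$-norm. For $x\in Y_t$ and $\tau\le t$ I split $\|\Phi_u(x)(\tau)-w\|_{X_\alpha}$ into three parts. For the semigroup term I write $x_0=w+a$ with $\|a\|_{X_\alpha}<r$; since $(\omega I-A)^\alpha$ commutes with $T(\tau)$ on $X_\alpha$, one gets $\|T(\tau)a\|_{X_\alpha}\le Me^{\lambda\tau}r$, while $\|T(\tau)w-w\|_{X_\alpha}\to0$ as $\tau\to+0$ by strong continuity applied to $(\omega I-A)^\alpha w\in X$. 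For the nonlinear term, the bound $\|x(s)\|_{X_\alpha}\le\|w\|_{X_\alpha}+Mr+\delta$, the Lipschitz property \eqref{eq:Lipschitz-in-x-power-alpha}, and Assumption~\ref{ass:Regularity-f-analytic}(v) produce an $L^\infty$-bound $K$ on $s\mapsto\|f(x(s),u(s))\|_X$; applying estimate \eqref{eq:Convolution-analytic-for-Linfty-norm} of Proposition~\ref{prop:ISS-analytic-systems}(i) with $d=\alpha$ bounds its $X_\alpha$-contribution by $R\tau^{1-\alpha}e^{\kappa\tau}\big(L(K)K+\sigma(C)+c\big)\to0$. The input term is handled by Proposition~\ref{prop:Continuity-analytic-systems} with $d=\alpha$, which is legitimate precisely because $B\in L(U,X_{-1+\alpha+\varepsilon})$ with $\varepsilon>0$, and \eqref{eq:Generalized-admissibility-estimate-analytic} gives the bound $R\tau^{\varepsilon}e^{\kappa\tau}C\to0$. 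This vanishing factor $\tau^\varepsilon$ is exactly what makes $B$ behave as a zero-class operator in the $X_\alpha$-norm, which is why the target ball is $B_{Mr+\delta,X_\alpha}(w)$ with no input-dependent radius. Summing and shrinking $t$ to some $t_1$ yields invariance, uniformly in $x_0\in W$; the contraction estimate comes from the same convolution bound applied to $f(x(\cdot),u(\cdot))-f(y(\cdot),u(\cdot))$, giving a prefactor $R t^{1-\alpha}e^{\kappa t}L(K)\le\tfrac12$ for $t$ below some $t_2$. The Banach fixed-point theorem then gives a unique fixed point in $Y_{\min\{t_1,t_2\}}$, the sought mild solution in $B_{Mr+\delta,X_\alpha}(w)$.

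Two points need care. First, $\Phi_u$ must genuinely land in $C([0,t],X_\alpha)$ up to $t=0$: $T(\cdot)x_0$ is $X_\alpha$-continuous since $x_0\in X_\alpha$, and the $X_\alpha$-continuity of the two integral terms follows from the continuity statements in Propositions~\ref{prop:ISS-analytic-systems}(i) and~\ref{prop:Continuity-analytic-systems}, invoking Assumption~\ref{ass:Regularity-f-analytic}(iv) with $p>\tfrac{1}{1-\alpha}$. The same observation shows that \emph{any} mild solution in the sense of Definition~\ref{def:Mild-solution-analytic} is automatically $X_\alpha$-continuous on $[0,\tau]$ (hence bounded in $X_\alpha$), so uniqueness is not restricted to $Y_t$: the difference of two mild solutions satisfies a convolution inequality with the weakly singular kernel $(t-s)^{-\alpha}$ coming from Proposition~\ref{prop:Analytic-semigroup-Bound-on-A^alphaT(t)}, and a singular Gronwall lemma forces them to coincide. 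Finally, the passage from a single point to a general compact $Q$ is verbatim the compactness argument ending the proof of Theorem~\ref{PicardCauchy}: if $\inf_{w\in Q}t_1(w,r,\Sc,\delta)=0$, a convergent subsequence $w_k\to w^*\in Q$ contradicts the positivity of the uniform existence time attached to a slightly enlarged ball around $w^*$.

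I expect the main obstacle to be the bookkeeping around the singular kernel $(t-s)^{-\alpha}$: unlike the general case, the operator norms $\|(\omega I-A)^\alpha T(t-s)\|$ blow up as $s\to\tau$, so every estimate must be routed through the integrable singularity via Propositions~\ref{prop:ISS-analytic-systems} and~\ref{prop:Continuity-analytic-systems} rather than a crude supremum bound. One must simultaneously keep track of three small exponents, namely $1-\alpha$ for the nonlinearity, $\varepsilon$ for the input operator, and the constraint $p>\tfrac{1}{1-\alpha}$ in Assumption~\ref{ass:Regularity-f-analytic}(iv), in order to guarantee that all three error contributions vanish as $t\to+0$.
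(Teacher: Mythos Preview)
Your proposal is correct and follows the same strategy as the paper. The only cosmetic difference is that the paper applies the isometry $(\omega I-A)^\alpha:X_\alpha\to X$ explicitly and runs the fixed-point argument on $y=(\omega I-A)^\alpha x$ in $C([0,t],X)$, whereas you work intrinsically with $x$ in $C([0,t],X_\alpha)$; the invariance, contraction, and compactness steps are otherwise identical, and your extra remark on uniqueness via a singular Gronwall inequality is a welcome clarification that the paper leaves implicit.
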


\begin{proof}
First, we show the claim for the case if $Q=\{w\}$ is a single point in $X_\alpha$.

\textbf{(i).} Take any $\omega>\omega_0(T)$, and consider the corresponding space $X_\alpha$. 
Pick any $r>0$ and any $C>0$ such that $W := B_{r,X_\alpha}(w) \subset B_{C,X_\alpha}$, and $\Uc_\Sc \subset B_{C,\Uc}$. Pick any $u \in \Uc_\Sc$. Take also any $\delta>0$, and consider the following sets (depending on a parameter $t>0$):
\begin{eqnarray*}
Y_{t}:= \big\{y \in C([0,t],X): \|y(s) - (\omega I-A)^\alpha w\|_X\leq Mr + \delta \ \ \forall s \in [0,t]\big\},
%\label{eq:Y_T_Def-analytic}
\end{eqnarray*}
endowed with the metric $\rho_{t}(y_1,y_2):=\sup_{s \in [0,t]} \|y_1(s)-y_2(s)\|_X$, which makes $Y_t$ complete metric spaces for all $t>0$.
%
%We endow $Y_t$ with the metric $\rho(x,y):=\sup_{s \in [0,t]} \|x(s)-y(s)\|_X$, which makes $Y_{t}$ a complete metric space for any $t>0$.

\textbf{(ii).} Pick any $x_0 \in W$. We are going to prove that $C([0,t],X)$ is invariant under the operator
$\Phi_u$, defined for any $y \in Y_{t}$ and all $\tau \in [0,t]$ by
\begin{align}
\label{eq:FP-Operator}
\Phi_u(y)(\tau) 
&= (\omega I-A)^\alpha T(\tau)x_0  + \int_0^\tau (\omega I-A)^\alpha T_{-1}(\tau-s) Bu(s)ds  \nonumber\\
&\qquad\qquad\qquad + \int_0^\tau (\omega I-A)^\alpha T(\tau-s)f\big((\omega I-A)^{-\alpha} y(s),u(s)\big)ds.
%&= g(\tau) + \int_0^\tau T(\tau-s)f\big(s,x(s),u(s)\big)ds,
\end{align}
%where
%\[
%g(\tau) = T(\tau)x_0 + \int_0^\tau T_{-1}(\tau-s) Bu(s)ds.
%\]

Since $y\in C([0,t],X)$, the map $s \mapsto (\omega I-A)^{-\alpha} y(s)$ is in $C([0,t],X_\alpha)$, as for any $s_1,s_2 \in[0,\tau]$ we have that 
\begin{align*}
%\label{eq:}
\big\|(\omega I-A)^{-\alpha} y(s_1) - (\omega I-A)^{-\alpha} y(s_2)\big\|_{X_\alpha} = \|y(s_1) - y(s_2)\|_X.
\end{align*}

By Assumption~\ref{ass:Regularity-f-analytic}, the map $s\mapsto f\big((\omega I-A)^{-\alpha} y(s),u(s)\big)$ is in $L^p_{\loc}(\R_+,X)$, with a certain $p>\frac{1}{1-\alpha}$. 
Proposition~\ref{prop:ISS-analytic-systems} ensures, that the map
\[
\tau\mapsto \int_0^\tau (\omega I-A)^\alpha T(\tau-s)f\big((\omega I-A)^{-\alpha} y(s),u(s)\big)ds
\]
is continuous.

Since $B\in L(U,X_{-1+\alpha+\varepsilon})$, Proposition~\ref{prop:ISS-analytic-systems}(ii) implies that
\[
\tau \mapsto \int_0^\tau (\omega I-A)^\alpha  T_{-1}(\tau-s)  Bu(s)ds
\]
belongs to $C([0,\tau],X)$.
%
%
%
%Since $B\in L(U,X_{-1+\alpha+\varepsilon})$, it holds that $(\omega I-A)^{-1+\alpha+\varepsilon} B \in L(U,X)$.  
%As $(\omega I-A)^{1-\alpha-\varepsilon}$ commutes with the semigroup on $X_\alpha$ due to  \cite[Theorem 2.6.13, p. 74]{Paz83}, we have the following 
%\begin{align}
%\label{eq:influence-of-input}
%\int_0^\tau (\omega I-A)^\alpha & T_{-1}(\tau-s)  Bu(s)ds \nonumber \\
%&=\int_0^\tau (\omega I-A)^\alpha T_{-1}(\tau-s) (\omega I-A)^{1-\alpha-\varepsilon}  (\omega I-A)^{-1+\alpha+\varepsilon}  Bu(s)ds \nonumber\\
%&= \int_0^\tau (\omega I-A)^{1-\varepsilon} T_{-1}(\tau-s) (\omega I-A)^{-1+\alpha+\varepsilon}  Bu(s)ds\nonumber\\
%&= \int_0^\tau (\omega I-A)^{1-\varepsilon}  T(\tau-s) (\omega I-A)^{-1+\alpha+\varepsilon}  Bu(s)ds,
%\end{align}
%As $u\in L^\infty_{\loc}(\R_+,U)$, and $(\omega I-A)^{-1+\alpha+\varepsilon} B \in L(U,X)$, we have that 
%$(\omega I-A)^{-1+\alpha+\varepsilon}  Bu\in L^\infty_{\loc}(\R_+,X)$.
%Proposition~\ref{prop:Auxiliary-map-analytic} ensures that the map 
%\[
%\tau \mapsto \int_0^\tau (\omega I-A)^\alpha & T_{-1}(\tau-s)  Bu(s)ds
%\]
%is well-defined and in $C([0,\tau],X)$.

\ifAndo
\mir{In Sch20, Felix considers continuous inputs, and he argues there that the $\varepsilon$ is not needed. 
Argumentation by Felix is not fully clear to me.}
\fi

Overall, the function $\Phi_u(y)$ is continuous, and thus $\Phi_u$ maps $C([0,t],X)$ to $C([0,t],X)$.
%$Y_t$ to $Y_t$.

\textbf{(iii).} 
Now we prove that for small enough $t$ the spaces $Y_{t}$  are invariant under the operator $\Phi_u$.

Fix any $t>0$ and pick any $y \in Y_{t}$.
As $x_0 \in W=B_{r,X_\alpha}(w)$, there is $a \in X_\alpha$: $\|a\|_{X_\alpha} <r$ such that $x_0=w+a$.
 
Then for any $\tau<t$, we obtain that
\begin{align*}
%\label{eq:technical-estimates-input-operator}
%\dist(\Phi_{t}(x)(\tau),W)\leq &
\|&\Phi_{t}(y)(\tau)- (\omega I-A)^\alpha w\|_X \nonumber\\
&\leq   \Big\|(\omega I-A)^\alpha T(\tau)x_0 - (\omega I-A)^\alpha w\Big\|_X + \Big\|\int_0^\tau (\omega I-A)^\alpha T_{-1}(\tau-s) Bu(s)ds\Big\|_X \nonumber\\
&\qquad\qquad\qquad + \int_0^\tau \big\|(\omega I-A)^\alpha T(\tau-s)\big\| \big\|f((\omega I-A)^{-\alpha}y(s),u(s))\big\|_Xds. \nonumber
\end{align*}
We substitute $x_0:=w+a$ into the first term on the right-hand side of the above inequality. 
The last term we estimate using \eqref{eq:Bound-on-A^alphaT(t)}.
To estimate the second term, we use that $(\omega I-A)^{\alpha}  B \in L(U,X_{-1+\varepsilon})$. 
\amc{By} Proposition~\ref{prop:ISS-analytic-systems}, $(\omega I-A)^{\alpha} B$ is zero-class $\infty$-admissible, and thus there is an increasing continuous function $t \mapsto h_t$ satisfying $h_0=0$, such that:
\begin{align}
\label{eq:technical-estimates-input-operator}
\big\|&\Phi_{t}(y)(\tau)- (\omega I-A)^\alpha w\big\|_X \leq \big\|(\omega I-A)^\alpha T(\tau)w - (\omega I-A)^\alpha w\big\|_X \\
			& \qquad + \|(\omega I-A)^\alpha T(\tau)a\|_X  + h_{\tau} \|u\|_{L^\infty([0,\tau],U)} \nonumber\\
			&\qquad+ \int_0^\tau \frac{C_\alpha e^{\lambda (\tau-s)}}{(\tau-s)^\alpha} \big(\|f(0,u(s))\|_X \nonumber\\
			&\qquad\qquad\qquad\qquad\qquad+ \big\|f((\omega I-A)^{-\alpha}y(s),u(s))-f(0,u(s))\big\|_X\big)ds. \nonumber
\end{align}

To estimate the latter expression, note that 
\begin{itemize}
	\item $\|(\omega I-A)^\alpha a\|_X = \|a\|_{X_\alpha} < r$. 
	\item For all $s\in[0,t]$ we have 
\begin{align*}
\|(\omega I-A)^{-\alpha}y(s)-0\|_{X_\alpha} = \|y(s)\|_X &\leq \|(\omega I-A)^\alpha w\|_X+Mr+\delta\\
&\leq M(\|w\|_{X_\alpha}+r) +\delta\leq MC+\delta :=K.
\end{align*}
	\item In view of Assumption~\ref{ass:Regularity-f-analytic}, it holds that 
\[
\|f(0,u(s))\|_X \leq \sigma(\|u(s)\|_U) + c,\quad \text{ for a.e. } s \in [0,t].
\]
	\item $h$ is a monotonically increasing continuous function.
\end{itemize}
As $M\geq 1$, it holds that $K>C$, and Lipschitz continuity of $f$ on bounded balls ensures that there is $L(K)>0$, such that for all $\tau\in[0,t]$ 			
\begin{align*}
%\dist(\Phi_{t}(x)(\tau),W)\leq &
\|&\Phi_{t}(y)(\tau)-(\omega I-A)^\alpha w\|_X \\			
			&\leq \|(\omega I-A)^\alpha T(\tau)w - (\omega I-A)^\alpha w\|_X + \|T(\tau)(\omega I-A)^\alpha a\|_X 
			+ h_{\tau} \|u\|_{L^\infty([0,\tau],U)}\\
			&\quad+ \int_0^\tau \frac{C_\alpha}{(\tau-s)^\alpha} e^{\lambda (\tau-s)} \big(\sigma(\|u(s)\|_U) + c + L(K)\|(\omega I-A)^{-\alpha}y(s)\|_{X_\alpha}\big)ds\\
			&\leq \sup_{\tau\in[0,t]}\|T(\tau)(\omega I-A)^\alpha w - (\omega I-A)^\alpha w\|_X + Me^{\lambda t}r + h_{t} \|u\|_{L^\infty([0,t],U)}\\
			&\quad+ C_\alpha e^{\lambda t} \big(\sigma(C) + c + L(K)K\big) \int_0^t \frac{1}{s^\alpha} ds.
\end{align*}
Since $T$ is a strongly continuous semigroup, and $h_t\to 0$ as $t\to +0$, from this estimate, it is clear that  there exists $t_1$, such that 
\[
\|\Phi_u(y)(t)-w\|_X \leq Mr +\delta,\quad \text{ for all } t \in [0,t_1].
\]
This means, that $Y_{t}$ is invariant with respect to $\Phi_u$ for all $t \in (0,t_1]$, and $t_1$ does not depend on the choice of $x_0 \in W$.

\textbf{(iv).} 
Now pick any $t>0$, $\tau \in [0, t]$, and any $y_1, y_2 \in Y_{t}$. Then it holds that
\begin{align*}
\|\Phi_u(y_1)&(\tau) - \Phi_u(y_2)(\tau)\|_X \\
&\leq \int_0^\tau \|(\omega I-A)^\alpha T(\tau-s)\|\\
&\qquad\qquad \cdot\big\|f((\omega I-A)^{-\alpha}y_1(s),u(s)) - f((\omega I-A)^{-\alpha}y_2(s),u(s))\big\|_Xds \\
&\leq %L(K)  \sup_{\tau\in[0,t]}\|(\omega I-A)^\alpha T(\tau)\| 
\int_0^t L(K)\frac{C_\alpha}{(\tau-s)^\alpha} e^{\lambda (\tau-s)} \|y_1(s)-y_2(s)\|_X ds \\
&\leq L(K) C_\alpha e^{\lambda t} \int_0^t s^{-\alpha} ds \rho_t(y_1,y_2) \\
&\leq L(K) C_\alpha e^{\lambda t} \frac{t^{1-\alpha}}{1-\alpha} \rho_t(y_1,y_2) \\
%&\leq L(K)  \sup_{\tau\in[0,t]}\|(\omega I-A)^\alpha T(\tau)\| t  \rho_t(y_1,y_2) \\
&\leq \frac{1}{2} \rho_t(y_1,y_2),
\end{align*}
for $t \leq t_2$, where $t_2>0$ is a small enough real number that does not depend on the choice of $x_0 \in W$.

%
%\textbf{(iv).} Now pick any $t>0$, $\tau \in [0, t]$, and any $y_1, y_2 \in Y_{t}$. Then it holds that
%\begin{align*}
%\|\Phi_u(y_1)&(\tau) - \Phi_u(y_2)(\tau)\|_X \\
%&\leq \int_0^\tau \|(\omega I-A)^\alpha T(\tau-s)\|\\
%&\qquad\qquad \cdot\big\|f((\omega I-A)^{-\alpha}y_1(s),u(s)) - f((\omega I-A)^{-\alpha}y_2(s),u(s))\big\|_Xds \\
%&\leq L(K)  \sup_{\tau\in[0,t]}\|(\omega I-A)^\alpha T(\tau)\| \int_0^t \|y_1(s)-y_2(s)\|_X ds \\
%&\leq L(K)  \sup_{\tau\in[0,t]}\|(\omega I-A)^\alpha T(\tau)\| t  \rho_t(y_1,y_2) \\
%&\leq \frac{1}{2} \rho_t(y_1,y_2),
%\end{align*}
%for $t \leq t_2$, where $t_2>0$ is a small enough real number that does not depend on the choice of $x_0 \in W$.

\ifnothabil	\sidenote{\mir{Reference for Full book:\quad	Theorem~\ref{thm:Banach fixed point theorem}.}}\fi

According to Banach fixed point theorem, there exists a unique $y \in Y_t$ that is a fixed point of $\Phi_u$, that is 
\begin{align}
\label{eq:FP-Operator-solution}
y(\tau) 
&= (\omega I-A)^\alpha T(\tau)x_0  + \int_0^\tau (\omega I-A)^\alpha T_{-1}(\tau-s) Bu(s)ds  \nonumber\\
&\qquad\qquad\qquad + \int_0^\tau (\omega I-A)^\alpha T(\tau-s)f\big((\omega I-A)^{-\alpha} y(s),u(s)\big)ds.
%&= g(\tau) + \int_0^\tau T(\tau-s)f\big(s,x(s),u(s)\big)ds,
\end{align}
on $[0,\min\{t_1,t_2\}]$. 

As $(\omega I-A)^{\alpha}$ is invertible with a bounded inverse, $y$ solves \eqref{eq:FP-Operator-solution} if and only if 
$y$ solves
\begin{align}
\label{eq:FP-Operator-solution-II}
(\omega I-A)^{-\alpha} y(\tau) 
&= T(\tau)x_0  + \int_0^\tau T_{-1}(\tau-s) Bu(s)ds  \nonumber\\
&\qquad\qquad\qquad + \int_0^\tau T(\tau-s)f\big((\omega I-A)^{-\alpha} y(s),u(s)\big)ds.
%&= g(\tau) + \int_0^\tau T(\tau-s)f\big(s,x(s),u(s)\big)ds,
\end{align}
As $y \in C([0, \min\{t_1,t_2\}],X)$, the map $x:=(\omega I-A)^{-\alpha} y$ is in $C([0, \min\{t_1,t_2\}],X_\alpha)$, and is the unique mild solution of \eqref{eq:SEE+admissible-analytic}. 

\textbf{(v). General compact $Q$.} 
 Similar to the corresponding part of the proof of Theorem~\ref{PicardCauchy}.
\end{proof}

\begin{remark}
\label{rem:Existence for initial states in X} 
For systems without inputs, Theorem~\ref{PicardCauchy-analytic} was shown (in a somewhat different formulation without bounds on the growth of the solution) in \cite[Theorem 3.1]{Paz83}. 
We have proved our local existence result for initial conditions that are in $X_\alpha$. To ensure local existence and uniqueness 
for the initial states outside of $X_\alpha$, stronger requirements on $f$ have to be imposed, see \cite[Theorems 7.1.5, 7.1.6]{Lun12}.
\end{remark}

Introducing the concepts of maximal solutions and of well-posedness and arguing similar to Sections~\ref{sec:Local existence and uniqueness}, \ref{sec:Global-Well-posedness}, we obtain the following well-posedness theorem.
\begin{theorem}
\label{thm:Well-posedness-analytic}
Let $A$ generate an analytic semigroup, Assumption~\ref{ass:Regularity-f-analytic} hold, and let $\Uc:=L^\infty(\R_+,U)$. 
Then:
\begin{enumerate}[label=(\roman*)]
	\item For each $x\in X_\alpha$ and each $u \in\Uc$, there is a unique maximal solution of \eqref{eq:SEE+admissible-analytic}, defined over the certain maximal time-interval $[0,t_m(x,u))$. We denote this solution as $\phi(\cdot,x,u)$.
	\item The triple $\Sigma:=(X_\alpha,\Uc,\phi)$ is a well-defined control system in the sense of Definition~\ref{Steurungssystem}.
	\item $\Sigma$ satisfies the BIC property, that is if for a certain $x \in X_\alpha$ and $u\in\Uc$ we have $t_m(x,u)<\infty$, then 
	 $\|\phi(t,x,u)\|_{X_\alpha} \to \infty$ as $t \to t_m(x,u)-0$.
\end{enumerate}
\end{theorem}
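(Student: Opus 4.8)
The plan is to transfer the machinery of Sections~\ref{sec:Local existence and uniqueness} and \ref{sec:Global-Well-posedness} to the analytic setting, using Theorem~\ref{PicardCauchy-analytic} as the local existence and uniqueness result in place of Theorem~\ref{PicardCauchy}. The one structural difference to keep in mind throughout is that $f$ is only defined on $X_\alpha\tm U$, so a mild solution is $X_\alpha$-valued only for positive times; hence every time we restart a solution we must restart from a state in $X_\alpha$. This is exactly what the smoothing built into Definition~\ref{def:Mild-solution-analytic} provides: for $\tau>0$ we have $\phi(\tau,x,u)\in X_\alpha$, while for $\tau=0$ the starting state is $x\in X_\alpha$ by hypothesis.

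For part~(i) I would first prove the analogue of Lemma~\ref{lem:Equality-of-solutions}: any two mild solutions on a common interval coincide. Given solutions $\phi_1,\phi_2$, let $t_3$ be the supremum of times on which they agree. Local uniqueness from Theorem~\ref{PicardCauchy-analytic} (applied to the singleton $Q=\{x_0\}$) gives $t_3>0$; if $t_3$ were strictly less than the end of the common interval, continuity would give $\phi_1(t_3)=\phi_2(t_3)\in X_\alpha$ (here $t_3>0$, so smoothing applies), and restarting both solutions at $t_3$ — a splitting of the integral equation \eqref{eq:SEE+admissible_Integral_Form-analytic} shows the shifted functions again solve a problem of the same form — local uniqueness would force agreement slightly beyond $t_3$, contradicting maximality. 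With uniqueness in hand, the maximal solution is obtained exactly as in Theorem~\ref{thm:Global_well-posedness}: glue together all solutions over the union of their domains, consistent by uniqueness, yielding a maximal interval $[0,t_m(x,u))$.

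For part~(ii) I would verify the axioms of Definition~\ref{Steurungssystem} with state space $X_\alpha$. The identity and causality properties are immediate from \eqref{eq:SEE+admissible_Integral_Form-analytic}, since its right-hand side at time $t$ depends on $u$ only through $u|_{[0,t]}$. Continuity of $t\mapsto\phi(t,x,u)$ \emph{in the $X_\alpha$-norm} is where the analytic construction pays off: the fixed point produced in Theorem~\ref{PicardCauchy-analytic} lives in $C([0,t],X)$, and the solution is $x=(\omega I-A)^{-\alpha}y$ with $(\omega I-A)^{-\alpha}$ an isometry of $X$ onto $X_\alpha$ by \eqref{eq:X-power-alpha}; hence $x\in C([0,t],X_\alpha)$, including at $t=0$ since $x\in X_\alpha$. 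The cocycle property is proved as in Theorem~\ref{thm:SEE-as-control systems}: writing \eqref{eq:SEE+admissible_Integral_Form-analytic} for $\phi(t+\tau,x,u)$, splitting both integrals at $s=\tau$, and factoring $T(t)$ out of the $[0,\tau]$-parts — legitimate because the nonlinearity integral is $X$-valued and, by $\infty$-admissibility, $\int_0^\tau T_{-1}(\tau-s)Bu(s)\,ds\in X$, where $T_{-1}(t)$ agrees with $T(t)$ — collapses the $[0,\tau]$-terms to $T(t)\phi(\tau,x,u)$; the $[\tau,t+\tau]$-terms, after the substitution $v(\cdot)=u(\tau+\cdot)$ and an appeal to uniqueness, are precisely the mild solution started at $\phi(\tau,x,u)\in X_\alpha$, giving $\phi(t+\tau,x,u)=\phi(t,\phi(\tau,x,u),v)$.

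For part~(iii) I would argue as in Proposition~\ref{prop:Unboundedness_finite_existence_time}. Suppose $t_m(x,u)<\infty$ but $\Liminf_{t\to t_m-0}\|\phi(t,x,u)\|_{X_\alpha}<\infty$; then there is a sequence $t_k\to t_m$ with $\|\phi(t_k,x,u)\|_{X_\alpha}\le C$ for all $k$. Using a uniform existence time $\tau(C)>0$ valid for all these restart states and for inputs of magnitude $\le\|u\|$, one starts a solution at each $\phi(t_k,x,u)$ living on $[0,\tau(C)]$; since $t_k\to t_m$, for large $k$ the cocycle property prolongs $\phi(\cdot,x,u)$ beyond $t_m$, contradicting maximality, so the $X_\alpha$-norm must tend to $\infty$. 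The main obstacle is exactly this uniformity of the restart time: Theorem~\ref{PicardCauchy-analytic} delivers a uniform existence time over \emph{compact} subsets of $X_\alpha$, not over bounded balls (recall Example~\ref{examp:PL-difference-to-ODE-case}), so the non-compact bounded set $\{\phi(t_k,x,u)\}$ of restart points is not directly covered. I expect this to be the delicate step; it can be handled either by exploiting the analytic smoothing to show that a bounded trajectory is Cauchy in $X_\alpha$ as $t\to t_m$ (so $\lim_{t\to t_m}\phi(t,x,u)$ exists in $X_\alpha$ and one restarts from a single limit state), or by establishing a uniform-over-balls existence time in the analytic case analogous to Theorem~\ref{thm:PicardCauchy-Uniformly-cont-semigroups}.
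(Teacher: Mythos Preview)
Your proposal is correct and follows exactly the paper's approach, which simply refers back to Sections~\ref{sec:Local existence and uniqueness} and~\ref{sec:Global-Well-posedness} with Theorem~\ref{PicardCauchy-analytic} replacing Theorem~\ref{PicardCauchy}. The ``delicate step'' you flag in part~(iii) is not an obstacle: Theorem~\ref{PicardCauchy-analytic} applied with the singleton $Q=\{0\}\subset X_\alpha$ and radius $r=C$ already yields a uniform existence time over the full ball $B_{C,X_\alpha}$, just as Proposition~\ref{prop:Unboundedness_finite_existence_time} invokes Theorem~\ref{PicardCauchy} in the non-analytic case. Example~\ref{examp:PL-difference-to-ODE-case} warns only against bounded sets that are not balls about a fixed point (there, the unit sphere), so neither a Cauchy-sequence argument nor a separate uniform-over-balls result is needed.
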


\subsection{Global existence}

%\amc{Now we derive sufficient conditions for forward completeness and BRS property.}

%
%\begin{proposition}
%\label{prop:Gronwall-bellman-analytic} 
%For any $a,b,\alpha,\beta\geq 0$ such that $\alpha<1$ and $\beta<1$ and any $T\in(0,+\infty)$ there is $M=M(a,b,\alpha,\beta)>0$ such that for any integrable function $u:[0,T]\to\R$ satisfying for almost all $t \in[0,T]$ the inequality 
%\[
%0\leq u(t)\leq at^{-\alpha} + b\int_0^t(t-s)^{-\beta}u(s)ds
%\]
%it holds for a.e. $t\in[0,T]$ that
%\[
%0\leq u(t)\leq Mt^{-\alpha}.
%\]
%\end{proposition}

\ifAndo
\mir{Not for the paper}
\amc{
\begin{proposition}
\label{prop:Gronwall-bellman-analytic-II} 
For any $a,b\geq 0$, any $\alpha,\beta \in[0,1)$, and any $T\in(0,+\infty)$ there is $M=M(b,\alpha,\beta,T)>0$ such that for any integrable function $u:[0,T]\to\R$ satisfying for almost all $t \in[0,T]$ the inequality 
\[
0\leq u(t)\leq at^{-\alpha} + b\int_0^t(t-s)^{-\beta}u(s)ds
\]
it holds for a.e. $t\in[0,T]$ that
\[
0\leq u(t)\leq aMt^{-\alpha}.
\]
\end{proposition}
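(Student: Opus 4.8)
The plan is to treat this as a weakly singular Gronwall--Bellman inequality of Henry type (cf.\ \cite[Lemma 7.1.1]{Hen81}) and to prove it by iterating the inequality against the convolution operator it defines. Introduce the operator
\[
(Jv)(t) := b\int_0^t (t-s)^{-\beta} v(s)\,ds,
\]
so that the hypothesis reads $0\le u \le a\psi + Ju$ a.e.\ on $[0,T]$, where $\psi(t):=t^{-\alpha}$. Since $\beta<1$, the kernel $r\mapsto b r^{-\beta}$ is locally integrable and nonnegative, so $J$ maps nonnegative integrable functions to nonnegative integrable functions and is monotone: $0\le v\le w$ implies $0\le Jv\le Jw$. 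Applying $J$ repeatedly to $u\le a\psi+Ju$ and using monotonicity yields, by induction on $n$,
\[
u(t)\le a\sum_{k=0}^{n-1}(J^k\psi)(t) + (J^n u)(t),\qquad \text{a.e. } t\in[0,T].
\]
The whole proof then reduces to (a) summing the series $\sum_k J^k\psi$ and (b) showing that the remainder $J^n u$ vanishes as $n\to\infty$.

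First I would compute the iterated kernels explicitly. Writing $(J^n v)(t)=\int_0^t k_n(t-s)v(s)\,ds$, an induction based on the Euler Beta integral
\[
\int_0^t (t-s)^{-\beta}s^{\gamma-1}\,ds = t^{\gamma-\beta}\,\frac{\Gamma(\gamma)\Gamma(1-\beta)}{\Gamma(\gamma+1-\beta)},\qquad \gamma>0,\ \beta<1,
\]
gives
\[
k_n(r)=\frac{\big(b\,\Gamma(1-\beta)\big)^n}{\Gamma\big(n(1-\beta)\big)}\,r^{\,n(1-\beta)-1}.
\]
Applying the same Beta identity with $\gamma=1-\alpha>0$ to $J^n\psi$ then yields the closed form
\[
(J^n\psi)(t)=\Gamma(1-\alpha)\,\frac{\big(b\,\Gamma(1-\beta)\big)^n}{\Gamma\big(n(1-\beta)+1-\alpha\big)}\;t^{\,n(1-\beta)}\,t^{-\alpha}.
\]

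Since $t\le T$ and $1-\beta>0$, we bound $t^{n(1-\beta)}\le T^{n(1-\beta)}$, so with $z:=b\,\Gamma(1-\beta)\,T^{1-\beta}$ the partial sums are controlled by
\[
\sum_{k=0}^{n-1}(J^k\psi)(t)\le \Gamma(1-\alpha)\Big(\sum_{k=0}^{\infty}\frac{z^{k}}{\Gamma\big(k(1-\beta)+1-\alpha\big)}\Big)t^{-\alpha}=:M\,t^{-\alpha}.
\]
The bracketed series is a Mittag-Leffler-type series $E_{1-\beta,\,1-\alpha}(z)$; it converges for every $z\ge 0$ because $\Gamma\big(k(1-\beta)+1-\alpha\big)$ grows faster than any geometric factor in $k$, which fixes the finite constant $M=M(b,\alpha,\beta,T)$. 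For the remainder, take $n$ large enough that $n(1-\beta)\ge 1$; then $r^{\,n(1-\beta)-1}\le T^{\,n(1-\beta)-1}$ on $[0,T]$, so
\[
0\le (J^n u)(t)\le \frac{z^{\,n}}{T\,\Gamma\big(n(1-\beta)\big)}\,\|u\|_{L^1([0,T])}\xrightarrow[n\to\infty]{}0,
\]
using that $u$ is integrable on $[0,T]$ and again the factorial growth of $\Gamma\big(n(1-\beta)\big)$. Letting $n\to\infty$ in the iterated inequality gives $0\le u(t)\le aM\,t^{-\alpha}$ a.e., as claimed.

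The main obstacle is purely the convergence/asymptotics step: one must know that the values $\Gamma\big(n(1-\beta)\big)$ (equivalently, the normalizing constants of the iterated Abel kernels) grow factorially, so that both the Mittag-Leffler series defining $M$ converges and the remainder term dies. Everything else---monotone iteration, the Beta-integral induction for $k_n$, and the reduction $t\le T$---is routine. I would isolate the asymptotic fact either via Stirling's formula or by invoking the standard convergence of the Mittag-Leffler function $E_{\mu,\nu}$ for $\mu>0$.
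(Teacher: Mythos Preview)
Your proof is correct and is precisely the standard iteration argument for this weakly singular Gronwall inequality. The paper does not give its own proof but simply refers to \cite[p.~6]{Hen81}, where the result appears as Lemma~7.1.1 with exactly the argument you outline: iterate $u\le a\psi+Ju$, compute the iterated Abel kernels via the Beta integral, sum the resulting Mittag--Leffler series, and kill the remainder using the factorial growth of $\Gamma(n(1-\beta))$.
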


\begin{proof}
See \cite[p. 6]{Hen81}.
\end{proof}
}

\fi

Motivated by \cite[Section 6.3, Theorem 3.3]{Paz83}, we have the following result guaranteeing the forward completeness and BRS property for semilinear analytic systems.
\begin{theorem}
\label{thm:Global-Existence-Nonlinear-Eq} 
Let $A$ generate an analytic semigroup, Assumption~\ref{ass:Regularity-f-analytic} hold, and let $\Uc:=L^\infty(\R_+,U)$. 
Assume further that there are $L,c>0$ and $\sigma\in\Kinf$ such that 
\begin{eqnarray}
\|f(x,u)\|_X \leq L\|x\|_{X_\alpha} + \sigma(\|u\|_U) + c,\quad x\in X_\alpha,\quad u\in U.
\label{eq:f-linearly-bounded}
\end{eqnarray} 
Then  $\Sigma:=(X_\alpha,\Uc,\phi)$ is a forward complete control system.
\end{theorem}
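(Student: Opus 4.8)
The plan is to fix an arbitrary pair $(x,u) \in X_\alpha \times \Uc$ and to show that its maximal existence time satisfies $t_m(x,u) = +\infty$; forward completeness then follows at once, since local well-posedness and the well-definedness of the flow are already granted by Theorem~\ref{thm:Well-posedness-analytic}. Arguing by contradiction, I would assume $t_m := t_m(x,u) < \infty$. By the BIC property in Theorem~\ref{thm:Well-posedness-analytic}(iii), this forces $\|\phi(t,x,u)\|_{X_\alpha} \to \infty$ as $t \to t_m - 0$, so it suffices to produce an a priori bound showing that $t \mapsto \|\phi(t,x,u)\|_{X_\alpha}$ stays bounded on $[0,t_m)$, which yields the desired contradiction.

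To obtain such a bound, I would apply $(\omega I - A)^\alpha$ to the mild-solution identity \eqref{eq:SEE+admissible_Integral_Form-analytic} and set $v(t) := \|\phi(t,x,u)\|_{X_\alpha} = \|(\omega I - A)^\alpha \phi(t,x,u)\|_X$. The homogeneous term is estimated by commuting $(\omega I-A)^\alpha$ with $T(t)$ on $X_\alpha$, giving $\|(\omega I-A)^\alpha T(t)x\|_X \le Me^{\lambda t}\|x\|_{X_\alpha}$. For the boundary-input term, since $B \in L(U,X_{-1+\alpha+\varepsilon})$, Proposition~\ref{prop:ISS-analytic-systems}(iii) shows that $(\omega I-A)^\alpha B$ is zero-class $\infty$-admissible, and Proposition~\ref{prop:Continuity-analytic-systems} (applied with index $\alpha+\varepsilon$ and $d=\alpha$) yields the explicit bound $\int_0^t \|(\omega I-A)^\alpha T(t-s)Bu(s)\|_X\,ds \le R\,t^\varepsilon e^{\kappa t}\|u\|_\Uc$. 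The nonlinear term is the delicate one: using Proposition~\ref{prop:Analytic-semigroup-Bound-on-A^alphaT(t)} in the form $\|(\omega I-A)^\alpha T(t-s)\| \le C_\alpha (t-s)^{-\alpha} e^{\kappa(t-s)}$ together with the growth hypothesis \eqref{eq:f-linearly-bounded}, I would bound it by
\[
C_\alpha e^{\kappa t}\int_0^t (t-s)^{-\alpha}\big(L\,v(s) + \sigma(\|u\|_\Uc)+c\big)\,ds .
\]

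Collecting these estimates on a fixed interval $[0,T]$ with $T < t_m$, and absorbing the bounded exponential factors and the elementary integral $\int_0^t (t-s)^{-\alpha}\,ds = t^{1-\alpha}/(1-\alpha)$ into constants, I arrive at a weakly singular integral inequality of the form $v(t) \le a + b\int_0^t (t-s)^{-\alpha} v(s)\,ds$ for $t \in [0,T]$, where $a,b \ge 0$ depend on $T$, $\|x\|_{X_\alpha}$ and $\|u\|_\Uc$ but not on $t$. Since $\alpha \in (0,1)$, the kernel is integrable but singular, so the classical Gronwall lemma does not apply directly; instead I would invoke the singular Gronwall--Bellman inequality (see \cite[p.~6]{Hen81}), which provides a constant $M(b,\alpha,T)$ with $v(t) \le a\,M(b,\alpha,T)$ for all $t \in [0,T]$. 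Because $t_m < \infty$, I may let $T \uparrow t_m$: the constants $a,b,M$ remain finite, so $v$ is bounded on all of $[0,t_m)$, contradicting the blow-up dictated by BIC. Hence $t_m(x,u) = +\infty$ for every $(x,u) \in X_\alpha \times \Uc$, and $\Sigma = (X_\alpha,\Uc,\phi)$ is forward complete.

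The main obstacle is the correct treatment of the singular convolution kernel $(t-s)^{-\alpha}$ arising from the smoothing bound on $(\omega I-A)^\alpha T(t)$: one must check its integrability, carefully separate the fractional power $\alpha$ from the admissibility index $\alpha+\varepsilon$ so that the input contribution is genuinely controlled in the $X_\alpha$-norm, and replace ordinary Gronwall by its weakly singular counterpart in order to close the a priori estimate uniformly up to $t_m$.
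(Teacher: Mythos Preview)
Your proposal is correct and follows essentially the same route as the paper: contradiction via BIC, applying $(\omega I-A)^\alpha$ to the variation-of-constants formula, estimating the three terms separately, and closing with the weakly singular Gronwall inequality from \cite[p.~6]{Hen81}. The only cosmetic difference is that the paper removes the exponential factors by the substitution $z(t):=e^{-\omega t}x(t)$ before invoking Gronwall, whereas you absorb them into constants on $[0,T]$ and let $T\uparrow t_m$; since $t_m<\infty$ in the contradiction argument, both devices work equally well.
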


\begin{proof}
Take any positive $\omega >\omega_0(T)$ and define $X_\alpha$ as in \eqref{eq:X-power-alpha}.

We argue by a contradiction.
Let $\Sigma$ be not forward complete. Then there are $(x_0,u)\in X_\alpha \tm \Uc$ such that $t_m(x_0,u)<\infty$. By Theorem~\ref{thm:Well-posedness-analytic}, we have that $\|\phi(t,x_0,u)\|_{X_\alpha} \to \infty$ as $t \to t_m(x_0,u)-0$.

For $t<t_m(x_0,u)$ denote $x(t):=\phi(t,x_0,u)$. 
As $x(\cdot) \subset X_\alpha$, we can apply $(\omega I-A)^\alpha$ along the trajectory $x(\cdot)$ to obtain
\begin{align*}
(\omega I-A)^\alpha x(t) &= (\omega I-A)^\alpha T(t)x_0 + \int_0^\tau (\omega I-A)^\alpha T_{-1}(\tau-s) Bu(s)ds  \\
			&\qquad	+ \int_{0}^t (\omega I-A)^\alpha T(t-s)f(x(s),u(s))ds.
\end{align*}
We obtain 
\begin{align*}
\|x(t)\|_\alpha 
&= \|(\omega I-A)^\alpha x(t)\|_X  \\
&\leq \|(\omega I-A)^\alpha T(t)x_0\|_X + \Big\|\int_0^\tau (\omega I-A)^\alpha T_{-1}(\tau-s) Bu(s)ds\Big\|_X  \\
			&\qquad\qquad	+ \int_{0}^t \big\|(\omega I-A)^\alpha T(t-s)\big\| \big\|f(x(s),u(s))\big\|_Xds.
\end{align*}
We now estimate the second term as in \eqref{eq:technical-estimates-input-operator}, where $h$ is a continuous increasing function with $h_0=0$. The last term we estimate using \eqref{eq:Bound-on-A^alphaT(t)}. Overall:
\begin{align*}
\|x(t)\|_\alpha 
&\leq Me^{\omega t} \|(\omega I-A)^\alpha x_0\|_X + h_{\tau} \|u\|_{L^\infty([0,t],U)}\\
			&\qquad\qquad	+ \int_{0}^t \frac{C_\alpha}{(t-s)^\alpha}e^{\omega (t-s)} 
\Big(L\|x(s)\|_{X_\alpha} + \sigma(\|u(s)\|_U) + c \Big)ds.
\end{align*}
Defining $z(t):=x(t)e^{-\omega t}$, we obtain from the previous estimate that
\begin{align*}
\|z(t)\|_\alpha 
&\leq M\|(\omega I-A)^\alpha x_0\|_X + \int_{0}^t \frac{C_\alpha}{s^\alpha}ds  \Big(\sigma(\|u\|_{L^\infty(\R_+,U)}) + c \Big)
+ h_{\tau} \|u\|_{L^\infty(\R_+,U)}\\
&\qquad + \int_{0}^t \frac{L C_\alpha}{(t-s)^\alpha}\|z(s)\|_{X_\alpha}ds.  
\end{align*}
An analytic version of Gronwall inequality \cite[p. 6]{Hen81} 
%Proposition~\ref{prop:Gronwall-bellman-analytic-II} 
shows that $z$, and hence $x$, is uniformly bounded on $[0,t_m(x_0,u))$, and BIC property (Theorem~\ref{thm:Well-posedness-analytic}(iii)) shows that $t_m(x_0,u)$ is not the finite maximal existence time. A contradiction.
\end{proof}

\ifAndo\mir{Similarly, we can derive further properties of the flow map for analytic semigroup, analogously to the theory that we developed in Section~\ref{sec:Semilinear boundary control systems}.}
\fi

%%%%%%%%%%%%%%%%%%%%%%%%%%%%%%%%%%%%%%%%%%%%%%%%%%%%%%%%%%%%%
\ifAndo
\amc{

\subsection{Analytic semilinear boundary control systems}
\label{sec:Analytic semilinear boundary control systems}

\begin{definition}
\label{def:Mild-solution-for-semilin-Boundary-Control-System-analytic}
Let $(\Ah,\Rh ,f)$ be a semilinear boundary control system with corresponding $A,G$ and $\alpha\in[0,1)$. Let $x_{0}\in X$,  $T>0$ and $u\in L_{\loc}^{1}([0,T],U)$. A continuous function $x:[0,T]\to X$ is called \emph{mild solution} to the BCS \eqref{eq:NBCS} on $[0,T]$ if $x(t)\in X_{\alpha}$ for all $t>0$ and $x$ solves 
\begin{align*}
x(t)= T(t)x_{0}+\int_{0}^{t}T_{-1}(t-s)\big(f(s,x(s))+Bu(s)\big)ds,\label{eq:mild-solution}
\end{align*}
for all $t\in[0,T]$ and where $B=\Ah B_{0}-A_{-1}B_{0}$. A function $x:[0,\infty)\to X$ is called a \emph{global mild solution} if $x|_{[0,T]}$ is a mild solution on $[0,T]$ for all $T>0$.
\end{definition}

}
\fi
%%%%%%%%%%%%%%%%%%%%%%%%%%%%%%%%%%%%%%%%%%%%%%%%%%%%%%%%%%%%%

\subsection{Example: well-posedness of a Burgers' equation with a distributed input}

We consider the following semilinear reaction-diffusion equation of Burgers' type on a domain $[0,\pi]$, with distributed input $u$, boundary input $d$ at $z=0$, and homogeneous Dirichlet boundary condition at $\pi$.
\begin{subequations}
\label{eq:Nonlinear-Burgers-Equation}
\begin{align}
x_t &= x_{zz} - xx_z  + f(z,x(z,t))+ u(z,t),\quad z\in (0,\pi),\quad t>0,\\
x&(0,t)=d(t),\quad t>0,\\
x&(\pi,t)=0.
\end{align}
\end{subequations}

Here $f: [0,\pi]\tm \R \to\R$ is  measurable  in  $z$,  locally  Lipschitz  continuous  in  $x$ uniformly  in  $z$, and 
\begin{eqnarray}
|f(z,y)| \leq h(z)g(|y|),\quad \text{for a.e. } z \in[0,\pi], \text{ and all } y \in\R,
\label{eq:f-bound}
\end{eqnarray}
where $h\in L^2(0,\pi)$, and $g$ is continuous, increasing, and both $h$ and $g$ are positive.

This system with $u=0$ and $d=0$ was investigated in \cite[p. 57]{Hen81}. Here we give a detailed analysis of this system with distributed and boundary inputs.

We denote $X:=L^2(0,\pi)$. The operator $A:= \frac{d^2}{dz^2}$ with the domain $D(A) = H^2(0,\pi)\cap H^1_0(0,\pi)$ generates an analytic semigroup on $X$.

We assume that the distributed input $u$ belongs to the space $\Uc=L^\infty(\R_+,U)$, with $U:=L^2(0,\pi)$, and the boundary input $d$ belongs to $\Dc:=L^\infty(\R_+,\R)$.

The system \eqref{eq:Nonlinear-Burgers-Equation} can be reformulated as a semilinear evolution equation
\begin{eqnarray}
x_t = Ax + F(x)+u + Bd,
\label{eq:Nonlinear-Burgers-Equation-Evolution-Eq}
\end{eqnarray}
where we slightly abuse the notation and use $x$ as an argument of the evolution equation.

The condition (ii) in Assumption~\ref{ass:Regularity-f-analytic} characterizing the admissibility properties of the boundary input operator $B$ holds in view of \cite[Example 2.16]{Sch20}.

%The operator $B$ describing the influence of a boundary input $d$ is $q$-admissible for the semigroup generated by $A$, with $q<\infty$. 
  
The space $X_{\frac{1}{2}}$ corresponding to the operator $A$, is given by (see \cite[Proposition 3.6.1]{TuW09}) 
\[
X_{\frac{1}{2}} = H^1_0(0,\pi),
\]
which is a Banach space with the norm
\[
\|x\|_{\frac{1}{2}}:=\Big|\int_0^\pi |x'(z)|^2 dz\Big|^{\frac{1}{2}}, \quad x \in X_{\frac{1}{2}}.
\]

The nonlinearity $F: X_{\frac{1}{2}} \to X$ in \eqref{eq:Nonlinear-Burgers-Equation-Evolution-Eq} is given by 
\[
F(x)(z) = - x(z)x'(z) + f(z,x(z)).
\]

\begin{proposition}
\label{prop:Burgers-well-posedness} 
For each $x_0 \in X_{\frac{1}{2}}$, each $u \in \Uc=L^\infty_{\loc}(\R_+,U)$, and each boundary input $d \in \Dc=L^\infty_{\loc}(\R_+,\R)$ the system \eqref{eq:Nonlinear-Burgers-Equation} possesses a unique maximal mild solution $\phi(\cdot,x_0,(u,d))$.
The system $\Sigma = (X_{\frac{1}{2}},\Uc\tm\Dc,\phi)$ is a control system satisfying the BIC property.
\end{proposition}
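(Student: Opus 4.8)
The plan is to recognize \eqref{eq:Nonlinear-Burgers-Equation-Evolution-Eq} as an instance of the semilinear analytic system \eqref{eq:SEE+admissible-analytic} and to verify Assumption~\ref{ass:Regularity-f-analytic}, after which Theorem~\ref{thm:Well-posedness-analytic} delivers all three claims at once. First I would set up the combined input: taking input values in $U\times\R$ and inputs in $\Uc\times\Dc$, I rewrite \eqref{eq:Nonlinear-Burgers-Equation-Evolution-Eq} as $\dot x = Ax + \mathcal F\big(x,(u,d)\big) + \hat B(u,d)$, where $\mathcal F\big(x,(v,e)\big):=F(x)+v$ absorbs the distributed input (which acts through the bounded identity $U=L^2(0,\pi)\to X$) into the nonlinearity, and $\hat B(v,e):=Be$ keeps only the genuinely unbounded boundary operator. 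Since $B\in L(\R,X_{-1/2+\varepsilon})$ by \cite[Example 2.16]{Sch20} (as already noted), $\hat B\in L(U\times\R,X_{-1+\alpha+\varepsilon})$ with $\alpha=\frac{1}{2}$, so Assumption~\ref{ass:Regularity-f-analytic}(i)--(ii) hold.

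The core of the argument is showing that $\mathcal F$ maps $X_{1/2}\times(U\times\R)$ into $X$ and is locally Lipschitz in its first argument, i.e.\ Assumption~\ref{ass:Regularity-f-analytic}(iii). Here the one-dimensional Sobolev embedding $X_{1/2}=H^1_0(0,\pi)\hookrightarrow L^\infty(0,\pi)$, together with the equivalence of $\|\cdot\|_{\frac{1}{2}}$ with the full $H^1_0$-norm, is the key tool. For the Burgers term I would split $x_1x_1'-x_2x_2'=x_1(x_1'-x_2')+(x_1-x_2)x_2'$ and estimate in $L^2$ by $\|x_1\|_{L^\infty}\|x_1'-x_2'\|_{L^2}+\|x_1-x_2\|_{L^\infty}\|x_2'\|_{L^2}$, which on a ball $B_{r,X_{1/2}}$ is bounded by a constant times $\|x_1-x_2\|_{\frac{1}{2}}$. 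For the reaction term I would use the pointwise local Lipschitz continuity of $f(z,\cdot)$ (uniform in $z$) together with $\|x_i\|_{L^\infty}\le Cr$ to get $\|f(\cdot,x_1)-f(\cdot,x_2)\|_{L^2}\le \ell(Cr)\|x_1-x_2\|_{L^2}$, while the bound \eqref{eq:f-bound} with $h\in L^2$ and $g$ continuous gives well-definedness and, at $x=0$, the estimate $\|\mathcal F(0,(v,e))\|_X\le \|f(\cdot,0)\|_{L^2}+\|v\|_U\le g(0)\|h\|_{L^2}+\|v\|_U$, yielding Assumption~\ref{ass:Regularity-f-analytic}(v) with $\sigma=\id$.

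For Assumption~\ref{ass:Regularity-f-analytic}(iv) I would argue that along the functions relevant to the fixed-point scheme --- namely those $x$ with $(\omega I-A)^{1/2}x\in C([0,t],X)$, equivalently $x\in C([0,t],X_{1/2})$ --- the superposition $s\mapsto\mathcal F\big(x(s),(u(s),d(s))\big)$ is strongly measurable (the term $s\mapsto F(x(s))$ is continuous into $X$ by (iii), and $s\mapsto u(s)$ is measurable) and essentially bounded on compact intervals, since $\|x(s)\|_{\frac{1}{2}}$ is bounded there; hence it lies in $L^\infty_{\loc}\subset L^p_{\loc}$ for every $p$, in particular for some $p>\frac{1}{1-\alpha}=2$. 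I expect this step to be the main obstacle, because the Burgers term grows quadratically in $\|x\|_{\frac{1}{2}}$, so local integrability genuinely relies on the $X_{1/2}$-regularity (and the attendant $L^\infty$-control) of the solution candidates rather than on mere pointwise membership in $X_{1/2}$. With Assumption~\ref{ass:Regularity-f-analytic} established, Theorem~\ref{thm:Well-posedness-analytic} applies and yields the unique maximal mild solution $\phi(\cdot,x_0,(u,d))$, the control-system structure of $\Sigma=(X_{1/2},\Uc\times\Dc,\phi)$, and the BIC property, completing the proof.
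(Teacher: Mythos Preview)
Your approach is correct and essentially identical to the paper's: verify that $F$ (or your $\mathcal F$) maps $X_{1/2}$ into $X$ and is locally Lipschitz via the Sobolev embedding $H^1_0\hookrightarrow L^\infty$, treat the Burgers term by the same splitting $x_1x_1'-x_2x_2'=x_1(x_1'-x_2')+(x_1-x_2)x_2'$, use the assumed pointwise Lipschitz property of $f$ for the reaction term, and then invoke Theorems~\ref{PicardCauchy-analytic} and~\ref{thm:Well-posedness-analytic}. You are actually more explicit than the paper in two places: you spell out how the distributed input $u$ is absorbed into the nonlinearity while the boundary input is carried by $\hat B$, and you correctly flag that Assumption~\ref{ass:Regularity-f-analytic}(iv), as literally stated for arbitrary $x\in C(\R_+,X)$ with mere pointwise $X_{1/2}$-membership, is not obviously satisfied by the quadratically growing Burgers nonlinearity---the paper's proof does not address this point either, and your observation that the fixed-point scheme only ever uses paths in $C([0,t],X_{1/2})$ is the right resolution.
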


\begin{proof}
We proceed in 3 steps:

\textbf{Step 1: $F$ maps bounded sets of $ X_{\frac{1}{2}}$ to bounded sets of $X$.} 
Since the elements of $X_{\frac{1}{2}}=H^1_0(0,\pi)$ are absolutely continuous functions, using the Cauchy-Schwarz inequality, we obtain that for any $x \in X_{\frac{1}{2}}$ it holds that 
\begin{eqnarray}
\sup_{z\in(0,\pi)}|x(z)| 
&=& \sup_{z\in(0,\pi)}\Big|\int_0^z x'(z) dz\Big| 
\leq \sup_{z\in(0,\pi)}\int_0^z |x'(z)| dz 
 =  \int_0^\pi |x'(z) |dz  \nonumber\\
&\leq & \Big|\int_0^\pi 1 dz\Big|^{\frac{1}{2}}  \Big|\int_0^\pi |x'(z)|^2 dz\Big|^{\frac{1}{2}} 
= \sqrt{\pi} \|x\|_{\frac{1}{2}}.
\label{eq:sup-norm-estimated-by-Sobolev-norm}
\end{eqnarray}

For any $x \in X_{\frac{1}{2}}$ consider
\begin{eqnarray*}
\|F(x)\|_X^2
= \int_0^\pi |F(x)(z)|^2 dz
&=& \int_0^\pi |x(z)x'(z) + f(z,x(z))|^2 dz\\
&\leq &\int_0^\pi 2|x(z)x'(z)|^2 + 2|f(z,x(z))|^2 dz.
%\label{eq:}
\end{eqnarray*}
Using \eqref{eq:sup-norm-estimated-by-Sobolev-norm} and \eqref{eq:f-bound}, we continue the estimates as follows:
\begin{eqnarray*}
\|F(x)\|_X^2
%&= &\int_0^\pi |F(t,x)(z)|^2 dz\\
%&= &\int_0^\pi |x(z)x'(z) + f(t,z,x(z))|^2 dz\\
&\leq &\int_0^\pi 2|x'(z)|^2 \pi \|x\|^2_{\frac{1}{2}} + 2 |h(z)|^2  |g(|x(z)|)|^2 dz\\
&\leq & 2\pi \|x\|^4_{\frac{1}{2}}  + 2 \int_0^\pi |h(z)|^2 |g(\sqrt{\pi} \|x\|_{\frac{1}{2}})|^2 dz\\
&= & 2\pi \|x\|^4_{\frac{1}{2}}  + 2 \|h\|^2_X |g(\sqrt{\pi} \|x\|_{\frac{1}{2}})|^2.
%\label{eq:}
\end{eqnarray*}
 Taking the square root and using that $\sqrt{a+b}\leq \sqrt{a} + \sqrt{b}$, for all $a,b\geq 0$, we finally obtain
\begin{eqnarray}
\|F(x)\|_X
&\le & \sqrt{2\pi} \|x\|^2_{\frac{1}{2}}  + \sqrt{2} \|h\|_X |g(\sqrt{\pi} \|x\|_{\frac{1}{2}})|.
\label{eq:Estimate-on-F}
\end{eqnarray}
This shows that $F$ is well-defined as a map from $X_{\frac{1}{2}}$ to $X$ and $F$ maps bounded sets of $X_{\frac{1}{2}}$ to bounded sets of $X$.

\textbf{Step 2: $F$ is Lipschitz continuous in $x$.} 
For $x_0 \in X_{\frac{1}{2}}$, there is a neighborhood $V$ of a compact set 
$
\{(z,x_0(z)):z\in[0,\pi]\}
$ 
in $[0,\pi]\tm \R$ and positive constants $L,\theta$ so that for
$(z,x_1) \in V$, $(z,x_2) \in V$ it holds that 
\[
|f(z,x_1) - f(z,x_2)| \leq L |x_1-x_2|.
\]
Thus, there is a neighborhood $U$ of $x_0$ in $X_{\frac{1}{2}}$
such that  $x\in U$ implies that $(z,x(z))\in V$ for a.e. $z\in[0,\pi]$ and for $x_1,x_2 \in U$ it holds that
\begin{align*}
\big\|f(\cdot,x_1(\cdot)) - f(\cdot,x_2(\cdot))\big\|_{X}^2
&= \int_0^\pi\big|f(z,x_1(z)) - f(z,x_2(z))\big|^2dz \\
&\leq L^2  \int_0^\pi|x_1(z)-x_2(z)|^2 dz,
\end{align*}
and using \eqref{eq:sup-norm-estimated-by-Sobolev-norm} we have that
\begin{eqnarray*}
\big\|f(\cdot,x_1(\cdot)) - f(\cdot,x_2(\cdot))\big\|_{X}^2
&\leq& L^2 \pi^2 \|x_1-x_2\|^2_{\frac{1}{2}}.
\end{eqnarray*}
Taking the square root, we have that  
\begin{eqnarray}
\big\|f(\cdot,x_1(\cdot)) - f(\cdot,x_2(\cdot))\big\|_{X}
\leq \pi L \|x_1-x_2\|_{\frac{1}{2}}.
\label{eq:Lipschitz estimate-for-f}
\end{eqnarray}
Finally, for any $x_1,x_2\in X_{\frac{1}{2}}$ it holds that 
\begin{eqnarray*}
\|x_1x_1' - x_2x_2'\|_X &\leq& \|x_1(x_1'-x_2')\|_X + \|(x_1-x_2)x_2'\|_X,
\end{eqnarray*}
and again using \eqref{eq:sup-norm-estimated-by-Sobolev-norm}, we proceed to 
\begin{eqnarray}
\|x_1x_1' - x_2x_2'\|_X 
&\leq& \sqrt{\pi} \|x_1\|_{\frac{1}{2}}  \|x_1-x_2\|_{\frac{1}{2}} + \sqrt{\pi} \|x_1-x_2\|_{\frac{1}{2}} \|x_2\|_{\frac{1}{2}}\nonumber\\
&=& \sqrt{\pi} ( \|x_1\|_{\frac{1}{2}}  + \|x_2\|_{\frac{1}{2}})\|x_1-x_2\|_{\frac{1}{2}}.
\label{eq:Lipschitz estimate-for-cross-term}
\end{eqnarray}
Combining \eqref{eq:Lipschitz estimate-for-f} and \eqref{eq:Lipschitz estimate-for-cross-term}, we obtain the required Lipschitz property for the function $F$.

\textbf{Step 3: Application of general well-posedness theorems.} Finally, Theorem~\ref{PicardCauchy-analytic} shows that the system \eqref{eq:Nonlinear-Burgers-Equation} possesses a unique mild solution for each $x_0 \in X_{\frac{1}{2}}$, each $u \in \Uc=L^\infty_{\loc}(\R_+,U)$, and each boundary input $d \in \Dc=L^\infty_{\loc}(\R_+,\R)$. Theorem~\ref{thm:Well-posedness-analytic} shows that $\Sigma$ is a control system satisfying the BIC property.
\end{proof}

\section{Boundary control systems}
\label{sec:Boundary_control_systems}

Control systems governed by partial differential equations are defined by PDEs describing the dynamics inside of the spatial domain and boundary conditions, describing the dynamics of the system at the boundary of the domain. Such systems look (at first glance) quite differently from the evolution equations in Banach spaces, studied in Section~\ref{sec:Semilinear boundary control systems}. 
This motivated the development of a theory of abstract boundary control systems that allows for a more straightforward interpretation of PDEs in the language of semigroup theory.

\subsection{Linear boundary control systems}
\label{sec:Linear_Boundary_control_systems}

Let $X$ and $U$ be Banach spaces. Consider a system
\begin{subequations}
\label{eq:BCS}
\begin{align}
\dot{x}(t) &= {\Ah} x(t), \qquad x(0) = x_0, \label{eq:BCS-1}\\
{\Rh}x(t) &= u(t),    \label{eq:BCS-2}
\end{align}
\end{subequations}
where the \emph{formal system operator} $\Ah: D( \Ah ) \subset X \to X$ is a linear operator, the control function $u$ takes values in $U$, and the \emph{boundary operator} $\Rh : D( \Rh ) \subset X \to U$ is linear and satisfies $D(\Ah) \subset D(\Rh)$.

%Equations \eqref{eq:BCS} look rather different from the classic linear infinite-dimensional systems studied previously: 
%\begin{equation}
%\dot{x}(t) = Ax(t) + Bu(t),
%\quad
%x(0) = x_0.
%\label{eq:BCS-standard-linear-systems}
%\end{equation}
%where $A$ is the generator of a strongly continuous semigroup, and $B$ is either bounded or admissible input operator.
%
%In order to use for the system \eqref{eq:BCS} the theory which we developed for linear systems \eqref{eq:BCS-standard-linear-systems}, we would like to transform \eqref{eq:BCS} into the form \eqref{eq:BCS-standard-linear-systems}. This can be done only under some additional assumptions.
\begin{definition}
\label{def:BCS}
The system \eqref{eq:BCS} is called a~\emph{linear boundary control system (linear BCS)} if the following conditions hold:
\begin{enumerate}
\item[(i)] The operator $A : D(A) \to X$ with $D(A) = D({\Ah} ) \cap \ker({\Rh})$ defined by
    \begin{equation}
    Ax = {\Ah}x \qquad \text{for} \quad x\in D(A)
        \label{eq:BSC-ass1}
    \end{equation}
    is the infinitesimal generator of a $C_0$-semigroup $(T(t))_{t\geq0}$ on $X$.
\item[(ii)] There is an operator $R \in \mathcal{L}(U,X)$ such that for all $u \in U$ we have $Ru \in D({\Ah})$,
    ${\Ah}R \in \mathcal{L}(U,X)$ and         
        \begin{equation}
    {\Rh}Ru = u, \qquad u\in U.
        \label{eq:BSC-ass2}
    \end{equation}
\end{enumerate}
The operator $R$ in this definition is sometimes called a \emph{lifting operator}. Note that $R$ is not uniquely defined by the properties in the item (ii).
\end{definition}

%\begin{definition}
%\label{def:Classical-solution-for-Boundary-Control-System} 
%Consider a BCS \eqref{eq:BCS}.
%A function $x : [0, \tau] \to X$ is called a~\emph{classical solution of \eqref{eq:BCS} on $[0,\tau]$} if $x$ is continuously differentiable, $x(t) \in D({\Ah})$ for
%all $t \in [0, \tau]$, and $x(t)$ satisfies \eqref{eq:BCS} for all $t \in [0, \tau]$.
%
%The function $x : [0,\infty) \to X$ is called a~\emph{classical solution of \eqref{eq:BCS} on $[0,\infty)$} if $x$ is a~classical solution on $[0, \tau]$ for every $\tau > 0$.
%\end{definition}

Item (i) of Definition~\ref{def:BCS} shows that for $u\equiv 0$ the equations \eqref{eq:BCS} are well-posed.
In particular, as $A$ is the generator of a certain strongly continuous semigroup $T(\cdot)$, for any $x\in D(A)$, it holds that $T(t)x \in D(A)$ and thus $T(t)x \in \ker({\Rh})$ for all $t\geq 0$, which means that \eqref{eq:BCS-2} is satisfied.

Item (ii) of the definition implies, in particular, that the range of the operator ${\Rh}$ equals $U$, and thus the values of inputs are not restricted.

\subsection{Semilinear boundary control systems}
\label{sec:Semilinear-BCS}

Let $(\Ah,\Rh)$ be a linear BCS.
We consider $D(\Ah) \subset X$ as a linear space equipped with the graph norm
\[
\|\cdot\|_{D(\Ah)}:=\|\cdot\|_{X}+\big\|\Ah\cdot\big\|_{X}.
\]
%It is well-known that $D(\Ah)$ is a Banach space.
%\mir{Is this really so? We do not say, e.g., that $\Ah$ is a closed operator.}

%Further let
%\begin{itemize}
%\item $\alpha\in[0,1)$ if $A$ generates an analytic semigroup, in which case 
%we denote by $X^\alpha$ the space $D((-A)^{-\alpha})$ endowed with the graph norm.
%
%\item $\alpha=0$ else (in which case we set $X_{0}=X$).
%\end{itemize}

Motivated by \cite{Sch20}, we consider the following class of semilinear boundary control systems.
\begin{definition}%[Nonlinear boundary control system]
\label{def:N-BCS}
Consider a linear BCS $(\Ah,\Rh)$.
Consider the following system 
\begin{subequations}
\label{eq:NBCS}
\begin{align}
\dot{x}(t)&= \Ah x(t)+f(x(t),w(t)), \ t>0, \label{eq:NBCS-1}\\
{\Rh}x(t) &= u(t),\ t>0,    \label{eq:NBCS-2}\\
x(0) &= x_0,    \label{eq:NBCS-3}
\end{align}
\end{subequations}
with a nonlinearity $f:X\times W\to X$, where $W$ is a Banach space. 

%Furthermore $u$ belongs to the space of admissible distributed inputs $\Uc:=PC_b(\R_+,U)$ of globally bounded, piecewise continuous functions from $\R_+$ to $U$ which are right-continuous.
%The distributed input $w$ belongs to the space $L^1_{\loc}(\R_+,W)$.
 
The system \eqref{eq:NBCS} we call a \emph{semilinear boundary control system (semilinear BCS)}.
\end{definition}

\ifAndo\mir{Why do we need continuity in $D(\Ah)$-norm?}\fi

Following \cite{Sch20}, we define classical solutions to the semilinear BCS \eqref{eq:NBCS}.
\begin{definition}
\label{def:BCS-Classical solutions} 
Let $x_{0}\in D(\Ah)$, $\tau>0$ and $u\in C([0,\tau],U)$.  A function 
\[
x\in C([0,T],D(\Ah))\cap C^{1}([0,T],X)
\] 
is called a \emph{classical solution} to the semilinear BCS \eqref{eq:NBCS} on $[0,\tau]$ if $x(t)\in X$ for all $t>0$ and the equations \eqref{eq:NBCS} are satisfied pointwise for $t\in (0,\tau]$. 

A function $x:[0,\infty)\to X$ is called \emph{(global) classical solution} to the semilinear BCS \eqref{eq:NBCS}, if $x|_{[0,\tau]}$ is a classical solution on $[0,\tau]$ for every $\tau>0$. 

If $x\in C([0,\tau],D(\Ah))\cap C^{1}((0,\tau],X)$ and $x(t)\in X$ for all $t>0$ and the equations \eqref{eq:NBCS} are satisfied pointwise for $t\in (0,\tau]$, then we say that $x$ is a \emph{classical solution on $(0,\tau]$}.
\end{definition}

The next theorem gives a representation for the (unique) solutions of \eqref{eq:NBCS} \emph{for smooth enough inputs}.
\begin{theorem}
\label{thm:BCS-classical-solution-Representation}
Consider the boundary control system \eqref{eq:BCS} with $f \in C(X\tm W,X)$.
Let $u \in C^2([0,\tau],U)$, and $w\in C([0,\tau],W)$ for some $\tau>0$, and let $x_0\in X$ be such that $x_0 - Ru(0) \in D(A)$. Assume that the classical solution of semilinear BCS $\phi(\cdot,x_0,u)$ exists on $[0,\tau]$. Then it can be represented as
\begin{subequations}
\label{eq:BCS-solution-for-smooth-inputs}
\begin{align}
\phi(t,x_0,u) &= T(t)\big(x_0-Ru(0)\big) \nonumber\\
& \qquad + \int_0^t T(t-r)\Big(f(x(r),w(r)) + {\Ah}Ru(r)-R\dot{u}(r)\Big) dr + Ru(t) \label{eq:BCS-solution-for-smooth-inputs-1}\\
&= T(t)x_0 + \int_0^t T(t-r)\Big(f(x(r),w(r))+{\Ah}Ru(r)\Big)dr\nonumber\\
& \qquad - A\int_0^t T(t-r)Ru(r) dr \label{eq:BCS-solution-for-smooth-inputs-2}\\
&= T(t)x_0 + \int_0^t T_{-1}(t-r)\Big(f\big(x(r),w(r)\big)+({\Ah}R - A_{-1}R)u(r)\Big)dr \label{eq:BCS-solution-for-smooth-inputs-3},
\end{align}
\end{subequations}
where $A_{-1}$ and $T_{-1}$ are the extensions of the infinitesimal generator $A$ and of the semigroup $T$ to the extrapolation space $X_{-1}$.
Furthermore, $A_{-1}R \in L(U,X_{-1})$ (and thus ${\Ah}R - A_{-1}R \in L(U,X_{-1})$).
\end{theorem}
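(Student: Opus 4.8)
The plan is to remove the inhomogeneous boundary condition by a lifting and then solve the resulting distributed problem by variation of constants. Writing $x(\cdot):=\phi(\cdot,x_0,u)$ for the given classical solution, I would introduce $v(t):=x(t)-Ru(t)$. By Definition~\ref{def:BCS-Classical solutions} we have $x(t)\in D(\Ah)$ and $x\in C^1([0,\tau],X)$, while $Ru(t)\in D(\Ah)$ and the lifting property $\Rh Ru=u$ give $\Rh v(t)=\Rh x(t)-u(t)=0$; hence $v(t)\in D(\Ah)\cap\ker(\Rh)=D(A)$ for all $t$. Since $u\in C^2$ and $R\in L(U,X)$, the curve $Ru$ is $C^1$ with derivative $R\dot u$, so $v\in C^1([0,\tau],X)$. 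Differentiating $x=v+Ru$ and inserting \eqref{eq:NBCS-1}, together with $\Ah x=\Ah v+\Ah Ru=Av+\Ah Ru$ on $D(A)$, yields
\begin{align*}
\dot v(t)=Av(t)+g(t),\qquad g(t):=\Ah Ru(t)+f\big(x(t),w(t)\big)-R\dot u(t).
\end{align*}
Here $g\in C([0,\tau],X)$ because $\Ah R\in L(U,X)$, $f\in C(X\tm W,X)$, and $x,w$ are continuous.

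\textbf{Variation of constants.} Because $v$ is a genuine classical solution with $v(s)\in D(A)$, I would differentiate $s\mapsto T(t-s)v(s)$ and use $\dot v(s)=Av(s)+g(s)$ to get $\frac{d}{ds}T(t-s)v(s)=-T(t-s)Av(s)+T(t-s)\big(Av(s)+g(s)\big)=T(t-s)g(s)$. Integrating over $[0,t]$ gives $v(t)=T(t)v(0)+\int_0^t T(t-s)g(s)\,ds$, and since $v(0)=x_0-Ru(0)\in D(A)$, adding $Ru(t)$ reproduces precisely \eqref{eq:BCS-solution-for-smooth-inputs-1}.

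\textbf{The two rewrites.} To pass to \eqref{eq:BCS-solution-for-smooth-inputs-2} I would invoke the standard fact that for $y\in C^1([0,\tau],X)$ the convolution $\int_0^t T(t-r)y(r)\,dr$ lies in $D(A)$ with $A\int_0^t T(t-r)y(r)\,dr=T(t)y(0)+\int_0^t T(t-r)\dot y(r)\,dr-y(t)$. Applying this with $y=Ru$ shows that the three stray terms $-T(t)Ru(0)-\int_0^t T(t-r)R\dot u(r)\,dr+Ru(t)$ in \eqref{eq:BCS-solution-for-smooth-inputs-1} collapse exactly to $-A\int_0^t T(t-r)Ru(r)\,dr$, giving \eqref{eq:BCS-solution-for-smooth-inputs-2}. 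For \eqref{eq:BCS-solution-for-smooth-inputs-3} I would move $A$ inside the integral. The restriction of $A_{-1}$ to $X$ is bounded as a map $X\to X_{-1}$ (writing the $X_{-1}$-norm via $(aI-A)^{-1}$ for $a\in\rho(A)$ gives $\|A_{-1}y\|_{X_{-1}}\le C\|y\|_X$), so it commutes with the Bochner integral and with $T_{-1}$; hence $A\int_0^t T(t-r)Ru(r)\,dr=\int_0^t T_{-1}(t-r)A_{-1}Ru(r)\,dr$. Absorbing this term and using $T_{-1}|_X=T$ on the $X$-valued integrands converts \eqref{eq:BCS-solution-for-smooth-inputs-2} into \eqref{eq:BCS-solution-for-smooth-inputs-3}. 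The same boundedness of $A_{-1}\colon X\to X_{-1}$ composed with $R\in L(U,X)$ yields $A_{-1}R\in L(U,X_{-1})$, and since $\Ah R\in L(U,X)\hookrightarrow L(U,X_{-1})$, also $\Ah R-A_{-1}R\in L(U,X_{-1})$.

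\textbf{Main obstacle.} The delicate point is the $X$-versus-$X_{-1}$ bookkeeping in the last rewrite: one must first establish that $\int_0^t T(t-r)Ru(r)\,dr\in D(A)$ (so that applying the honest generator $A$ is legitimate) and only then identify $A$ with $A_{-1}$ and pull it through the integral, so that the a priori $X_{-1}$-valued integral appearing in \eqref{eq:BCS-solution-for-smooth-inputs-3} is in fact $X$-valued and coincides with the left-hand side. The regularity $u\in C^2$ (hence $Ru\in C^1$) is exactly what makes the convolution identity in the previous step available.
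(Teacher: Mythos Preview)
Your proof is correct and follows precisely the standard lifting-and-variation-of-constants route: subtract the lifting $Ru$ to land in $D(A)$, apply the variation-of-constants formula to the resulting inhomogeneous Cauchy problem, then use the convolution identity $A\int_0^t T(t-r)y(r)\,dr = T(t)y(0)+\int_0^t T(t-r)\dot y(r)\,dr - y(t)$ for $y=Ru\in C^1$ and the boundedness of $A_{-1}\colon X\to X_{-1}$ to obtain the three equivalent representations. The paper itself omits the proof, citing \cite[Theorem 4.4]{MiP20}, \cite[pp.~93--94]{Sch20}, and \cite[Theorem 11.1.2]{JaZ12} for the linear case and remarking that the semilinear extension is analogous; your argument is exactly this standard one, spelled out in full.
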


The proof of the linear case (with $f=0$) should be well-known; see, e.g., \cite[Theorem 4.4]{MiP20}, \cite[pp. 93--94]{Sch20} for the proofs of this fact, and \cite[Theorem 11.1.2]{JaZ12} for a partial result. The nonlinear result can be obtained in a similar manner. Hence we omit the proof.

An advantage of the representation formula \eqref{eq:BCS-solution-for-smooth-inputs-1} is in the boundedness of the operators $R$ and $\Ah R$ involved in the expression. Its disadvantage is that the derivative of $u$ is employed.
Still, the expression in the right-hand side of \eqref{eq:BCS-solution-for-smooth-inputs-1} makes sense for any $x \in X$ 
and for any $u \in H^1([0,\tau],U)$, and can be called a mild solution of BCS \eqref{eq:BCS}, as is done, e.g., in \cite[p. 146]{JaZ12}.

The formula \eqref{eq:BCS-solution-for-smooth-inputs-3} does not involve any derivatives of inputs, and again is given in terms of a bounded operator ${\Ah}R - A_{-1}R \in L(U,X_{-1})$.
Moreover, if we consider the expression in the right-hand side of \eqref{eq:BCS-solution-for-smooth-inputs-3} in the extrapolation spaces $X_{-1}$, then it makes sense for all $x \in X$ and all $u\in L^{1}_{\loc}(\R_+,U)$, and constitutes a mild solution of 
\begin{eqnarray}
\dot{x}(t) = Ax(t) + f(x(t),w(t)) + Bu(t),
\label{eq:semilinear equation with an admissible operator}
\end{eqnarray}
with
\begin{eqnarray}
B:={\Ah}R-A_{-1}R.
\label{eq:BCS-Input-Operator}
\end{eqnarray}
This motivates us to define the mild solutions of semilinear BCS by means of the formula \eqref{eq:BCS-solution-for-smooth-inputs-3}, as was proposed in \cite{Sch20}.

\begin{definition}
\label{def:Mild-solution-for-semilin-Boundary-Control-System}
Let $(\Ah,\Rh ,f)$ be a semilinear boundary control system with corresponding $A,R$. Let $x_{0}\in X$,  $\tau>0$, $w\in L_{\loc}^{1}([0,\tau],W)$, and $u\in L_{\loc}^{1}([0,\tau],U)$. A continuous function $x:[0,\tau]\to X$ is called \emph{mild solution} to the semilinear BCS \eqref{eq:NBCS} on $[0,\tau]$ if $x(t)\in X$ for all $t>0$ and $x$ solves 
\begin{align*}
x(t)= T(t)x_{0}+\int_{0}^{t}T_{-1}(t-s)\big(f(x(s),w(s))+Bu(s)\big)ds,
%\label{eq:mild-solution}
\end{align*}
for all $t\in[0,\tau]$ and where $B=\Ah B_{0}-A_{-1}B_{0}$. A function $x:\R_+ \to X$ is called a \emph{global mild solution} if $x|_{[0,\tau]}$ is a mild solution on $[0,\tau]$ for all $\tau>0$.
\end{definition}

In other words, $x$ is a mild solution of a semilinear BCS \eqref{def:N-BCS}, if $x$ is a mild solution of 
\eqref{eq:semilinear equation with an admissible operator} with $B={\Ah}R - A_{-1}R$.

Thus, semilinear boundary control systems are a special case of semilinear evolution equations studied in Section~\ref{sec:Semilinear boundary control systems}, and we can use our well-posedness theory for semilinear evolution equations to analyze semilinear BCS.

\section*{Acknowledgements} 

The author thanks the Associate Editor and anonymous Reviewers for useful and constructive comments concerning the first version of this manuscript.

\section*{Declarations}

\subsection*{Funding}

\amc{This research has been} supported by the German Research Foundation (DFG) via the grant MI 1886/2-2.

\subsection*{Conflict of interest/Competing interests}

		There is no conflict of interests.

\subsection*{Ethics approval}

		Not applicable

\subsection*{Availability of data and materials}

		Not applicable

\subsection*{Authors' contributions}

		Not applicable

\bibliographystyle{abbrv}
\bibliography{C:/Users/Andrii/Dropbox/TEX_Data/Mir_LitList_NoMir,C:/Users/Andrii/Dropbox/TEX_Data/MyPublications}

%\bibliography{C:/Users/Andrii/Dropbox/TEX_Data/Mir_LitList_NoMir}
%\bibliography{C:/Users/Andrii/Dropbox/TEX_Data/MyPublications}
%\bibliography{C:/GoogleDrive/TEX_Data/Mir_LitList_NoMir,C:/GoogleDrive/TEX_Data/MyPublications}
%\bibliography{MyPublications,Mir_LitList_NoMir}
% Important: no space in the list.

\end{document}

\appendix 
\section{Recap on fractional powers of operators}
\label{sec:Recap on fractional powers of operators}

In this section, we make a short recap of the fractional powers of sectorial operators, that we need in Section~\ref{sec:Semilinear analytic boundary control systems}. 
A somewhat more detailed account of the properties of fractional powers of sectorial operators can be found, e.g., in \cite[Section 1.4]{Hen81}
and for a detailed treatment, we refer to \cite{Haa06}.

\begin{definition}
\label{def:Gamma function} 
For $z \in\C$ with $\re z >0$ the \emph{gamma function} is defined as
\begin{eqnarray}
\Gamma(z) = \int_0^\infty t^{z-1}e^{-t}dt.
\label{eq:Gamma-function}
\end{eqnarray}
\end{definition}

\begin{definition}
\label{def:Negative-Fractional-powers-operator}
Let $A$ be the generator of an exponentially stable analytic semigroup over a Banach space $X$.
For $\alpha>0$ define
\begin{eqnarray}
(-A)^{-\alpha}:=\frac{1}{\Gamma(\alpha)}\int_0^\infty t^{\alpha-1}T(t)dt.
\label{eq:Fractional-powers-operator}
\end{eqnarray}
\end{definition}
As the semigroup $T$ is analytic, % by Proposition~\ref{prop:Analytic-semroup-characterization}, 
the map $t\to T(t)$ is differentiable on $(0,+\infty)$ (see, e.g., \cite[Theorem 4.6, p. 101]{EnN00}), and hence continuous on this interval. As $T$ is also exponentially stable, the integral 
in \eqref{eq:Fractional-powers-operator} converges in the uniform operator topology. 
The following well-known property holds:
\begin{proposition}
\label{prop:Product-formula-fractional-powers}
Let $A$ be the generator of an exponentially stable analytic semigroup over a Banach space $X$. 
For any $\alpha>0$ the operator $(-A)^{-\alpha}$ belongs to $L(X)$, is injective and satisfies
\begin{eqnarray}
(-A)^{-\alpha}(-A)^{-\beta}=(-A)^{-(\alpha+\beta)},\quad \alpha,\beta>0.
\label{eq:Product-formula-fractional-powers}
\end{eqnarray}
\end{proposition}

%
%\begin{proof}
%By assumptions, there are $M,a>0$: $\|T(t)\|\leq Me^{-at}$ for all $t\geq 0$.
%Hence for all $\alpha>0$, we have
%\[
%\|(-A)^{-\alpha}\| \leq \frac{1}{\Gamma(\alpha)} \int_0^\infty t^{\alpha-1}\|T(t)\|dt<\infty.
%\]
%Now, for any $\alpha,\beta>0$ using the semigroup property, we have
%\begin{align*}
%%\label{eq:}
%(-A)^{-\alpha}(-A)^{-\beta} 
%&= \frac{1}{\Gamma(\alpha)\Gamma(\beta)}\int_0^\infty \int_0^\infty t^{\alpha-1} s^{\beta-1}T(t+s)ds \\
%&= \frac{1}{\Gamma(\alpha)\Gamma(\beta)}\int_0^\infty \int_0^\infty t^{\alpha-1} s^{\beta-1}T(t+s)dtds.
%\end{align*}
%Using a new variable $r=t+s$, we can rewrite the above integral as
%\begin{align*}
%%\label{eq:}
%(-A)^{-\alpha}(-A)^{-\beta} 
%&= \frac{1}{\Gamma(\alpha)\Gamma(\beta)}\int_0^\infty \int_0^r t^{\alpha-1} (r-t)^{\beta-1}T(r)dtdr\\
%&= \frac{1}{\Gamma(\alpha)\Gamma(\beta)}\int_0^\infty \Big[\int_0^r t^{\alpha-1} (r-t)^{\beta-1}dt \Big]T(r) dr.
%\end{align*}
%Making the change of variables $t = rz$ and using a well-known identity for gamma functions, we see that 
%\[
%\int_0^r t^{\alpha-1} (r-t)^{\beta-1}dt 
%= r^{\alpha+\beta-1}\int_0^1 z^{\alpha-1} (1-z)^{\beta-1}dz
%= r^{\alpha+\beta-1} \frac{\Gamma(\alpha)\Gamma(\beta)}{\Gamma(\alpha+\beta)}.
%\]
%Continuing our computations,
%\begin{align*}
%%\label{eq:}
%(-A)^{-\alpha}(-A)^{-\beta} &= \frac{1}{\Gamma(\alpha + \beta)}\int_0^\infty r^{\alpha+\beta-1}T(r) dr = (-A)^{-(\alpha+\beta)}.
%\end{align*}
%\end{proof}

\begin{definition}
\label{def:Positive-Fractional-powers-operator}
Let $A$ be the generator of an exponentially stable analytic semigroup over a Banach space $X$.
For $\alpha>0$ define $(-A)^{\alpha}$ as the inverse of $(-A)^{-\alpha}$ with 
$D((-A)^{\alpha}) = \im((-A)^{-\alpha})$.

By definition, we set $A^0:=I$.
\end{definition}

Let us collect several basic properties of the fractional powers; see \cite[Theorem 6.8, p. 72]{Paz83} for the first three items, and \cite[Exercises, p. 26]{Hen81} for the last one.
\begin{proposition}
\label{prop:Properties-of-positive-fractional-powers} 
Let $A$ be the generator of an exponentially stable analytic semigroup over a Banach space $X$.

\begin{enumerate}[label=(\roman*)]
	\item For $\alpha>0$ the operators $(-A)^\alpha$ are closed and densely defined.
	\item Whenever $\alpha > \beta>0$, we have $D((-A)^\alpha) \subset D((-A)^\beta)$.
	\item For all $\alpha,\beta\in\R$ it holds that 
	\begin{eqnarray}
	(-A)^{\alpha+\beta}x = (-A)^{\alpha}(-A)^{\beta}x,
	\label{eq:produce-rule-fractional}
	\end{eqnarray}
	for every $x \in D((-A)^\gamma)$, where $\gamma=\max\{\alpha,\beta,\alpha+\beta\}$.
	\item $(-A)^\alpha T(t) = T(t)(-A)^\alpha$ on $D((-A)^\alpha)$,\quad  $t\ge 0$.
\end{enumerate}
\end{proposition}
%
%\begin{proof}
%(i). As $(-A)^{\alpha}$ is an inverse of $(-A)^{-\alpha}$, and $(-A)^{-\alpha} \in L(X)$, then $(-A)^{\alpha}$ is a closed operator.
%
%(ii). We have for any $\alpha > \beta>0$
%$(-A)^{-\alpha} = (-A)^{-\beta}(-A)^{-(\alpha-\beta)}$.
%Hence, 
%\[
%D((-A)^\alpha)
%=\im((-A)^{-\alpha}) \subset \im((-A)^{-\beta})
%=D((-A)^\beta).
%\]
%
%(iii). This is clear.
%
%(iv). For $\alpha<0$, this follows from the definition of the fractional power of an operator 
%\eqref{eq:Fractional-powers-operator}. 
%For positive $\alpha$, this follows from
%\[
%(-A)^\alpha T(t) 
%=(-A)^\alpha T(t)(-A)^{-\alpha}(-A)^\alpha 
%=(-A)^\alpha (-A)^{-\alpha}T(t)(-A)^\alpha 
%= T(t)(-A)^\alpha.
%\]
%
%\end{proof}

 Using fractional powers of operators, we can define the spaces that will be important to deal with
nonlinear equations governed by analytic semigroups.
\begin{definition}
\label{def:X-power-alpha}
Let $A$ be the generator of an analytic semigroup over a Banach space $X$.
Let $\alpha\ge 0$. Pick any $\omega>\omega_0(T)$ and define the space $X_\alpha$ and its norm as 
\begin{eqnarray}
X_\alpha:=D((\omega I-A)^\alpha),\quad \|x\|_{X_\alpha}:=\|(\omega I-A)^\alpha x\|_X,\quad x \in X_\alpha.
\label{eq:X-power-alpha}
\end{eqnarray}
\end{definition}
In particular, by definition $X_0 = X$, and $X_1 = D(A)$ (with norms as  in \eqref{eq:X-power-alpha}).

\begin{remark}
\label{rem:well-definiteness-X-power-alpha} 
The choice of different $\omega > \omega_0(T)$ induces the same linear space $X_\alpha$ endowed with an equivalent norm.
\end{remark}

The spaces $X_\alpha$ have a good structure:
\begin{proposition}
\label{prop:X-power-alpha-are-Banach-spaces} 
Let $A$ be the generator of an exponentially stable analytic semigroup over a Banach space $X$.
For any $\alpha\ge 0$, the space $X_\alpha$ is a Banach space. 
Furthermore, for all $\alpha\ge\beta\ge0$ the space $X_\alpha$ is a dense subspace of $X_\beta$, with continuous embedding.
\end{proposition}

Similarly to the space $X_{-1}$, we introduce
\begin{definition}
\label{def:X_-alpha}
For $\alpha>0$ and $A$ as above, define the spaces $X_{-\alpha}$ as the completion of $X$ with respect to the norm $x \mapsto \|(\omega I-A)^{-\alpha}x\|_X$.
Then we have for any $0<\beta<\alpha<1$ the following chain of continuous embeddings:
\[
X_{-1} \supset X_{-\alpha} \supset X_{-\beta} \supset X = X_0 \supset X_{\beta} \supset X_{\alpha} \supset X_1.
\]
\end{definition}

%Similarly to Proposition~\ref{prop:Analytic-semigroup-Bound-on-A^nT(t)}, the following holds:
The following property holds:
\begin{proposition}
\label{prop:Analytic-semigroup-Bound-on-A^alphaT(t)} 
Let $T$ be an analytic semigroup on a Banach space $X$ with a growth bound $\omega_0(T)$ and the generator $A$.
Then for each $\omega,\kappa>\omega_0(T)$ and each $\alpha \in[0,1)$ we have $\im(T(t)) \subset X_\alpha$, and there is $C_\alpha>0$ such that 
\begin{eqnarray}
\|(\omega I-A)^\alpha T(t)\|  \leq \frac{C_\alpha}{t^\alpha}e^{\kappa t},\quad t>0.
\label{eq:Bound-on-A^alphaT(t)}
\end{eqnarray}
Furthermore, the map $t\mapsto (\omega I-A)^\alpha T(t)$ is continuous on $(0,+\infty)$ in the uniform operator topology.
\end{proposition}

%\section{Semilinear analytic boundary control systems}
%
%Consider again the system \eqref{eq:SEE+admissible} that we restate next:
%\begin{subequations}
%\label{eq:SEE+admissible-analytic} 
%\begin{eqnarray}
%\dot{x}(t) & = & Ax(t) + f(x(t)),\quad t>0,  \label{eq:SEE+admissible-analytic-1}\\
%x(0)  &=&  x_0, \label{eq:SEE+admissible-analytic-2}
%\end{eqnarray}
%\end{subequations}
%In Section~\ref{sec:Semilinear boundary control systems}, we have assumed that $f$ is a well-defined map from $X$ to $X$. 
%Although it sounds natural, it is, in fact, a quite restrictive assumption, as already basic nonlinearities, such as polynomial maps, do not satisfy it. 
%Indeed, if $f(x) = x^2$, where $x \in X:=L^2(0,1)$, then $f$ maps $X$ to the space $L^1(0,1)$.
%
%However, if $A$ is the generator of an analytic semigroup, we can develop a more powerful stability theory that will allow for polynomial nonlinearities on the right-hand side and much more.
%
%Assume that in \eqref{eq:SEE+admissible-analytic} the nonlinearity acts as $f:X_\alph \to X$.
%

\end{document}
%%%%%%%%%%%%%%%%%%%%%%%%%%%%%%%%%%%%%%%%%%%%%%%%%%
%%%%%%%%%%%%%%%%%%%%%%%%%%%%%%%%%%%%%%%%%%%%%%%%%%